\documentclass[3p,preprint]{elsarticle}
\usepackage{graphicx}
\usepackage{amssymb}
\usepackage{amsthm}
\usepackage{amsmath}
\usepackage{revsymb}
\usepackage{empheq}
\usepackage{mathtools}
\usepackage{subfigure}
\usepackage{lineno} 
\usepackage{enumerate}
\usepackage{multirow} 
\usepackage{caption} 
\usepackage{array} 
\usepackage{booktabs} 
\usepackage{tabularx}
\usepackage{longtable}
\usepackage{bm}
\usepackage{setspace}
\usepackage{float}
\usepackage{bbm}
\usepackage[hidelinks]{hyperref}
\usepackage{algorithm}
\usepackage{algorithmic}
\journal{arXiv}
\usepackage[figuresright]{rotating}

\newcounter{thmcounter}[section]
\theoremstyle{definition}
\newtheorem{definition}[thmcounter]{Definition}
\newtheorem{example}[thmcounter]{Example}

\newtheorem{remark}[thmcounter]{Remark}
\theoremstyle{plain}
\newtheorem{theorem}[thmcounter]{Theorem}

\newtheorem{lemma}[thmcounter]{Lemma}
\newtheorem{corollary}[thmcounter]{Corollary}

\numberwithin{equation}{section}
\numberwithin{thmcounter}{section}

\allowdisplaybreaks[3]

\begin{document}

\begin{frontmatter}

\title{A class of thin-film equations with space-time dependent 
gradient nonlinearity and its application to image sharpening}

\author{Yuhang Li$^{\ \textrm{a}}$}
\ead{mathlyh@stu.hit.edu.cn}
\author{Zhichang Guo$^{\ \textrm{a},}\cormark[1]$}
\ead{mathgzc@hit.edu.cn}
\author{Fanghui Song$^{\ \textrm{a}}$}
\ead{21b912024@stu.hit.edu.cn}
\author{Boying Wu$^{\ \textrm{a}}$}
\ead{mathwby@hit.edu.cn}
\author{Bin Guo$^{\ \textrm{b}}$}
\ead{bguo@jlu.edu.cn}
\cortext[1]{Corresponding author.}

\address{$^{\rm{a}}$The Department of Mathematics, Harbin Institute of Technology, Harbin 150001, China}
\address{$^{\rm{b}}$School of Mathematics, Jilin University, Changchun, Jilin Province 130012, China}

\begin{abstract}
    We introduce a class of thin-film equations with space-time dependent 
    gradient nonlinearity and apply them to image sharpening. By modifying the potential well 
    method, we overcome the challenges arising from variable exponents and the ``moving" Nehari 
    manifold, thereby providing a classification of global existence and finite time blow-up of 
    solutions under different initial conditions. Decay rates for global solutions and 
    lifespan estimates for blow-up solutions are also established. 
    Numerical experiments demonstrate the effectiveness of this class of equations as 
    image sharpening models.
\end{abstract}

\begin{keyword}
thin-film equation \sep time-dependent nonlinearity \sep variable exponent 
\sep global existence and blow-up \sep image processing
\end{keyword}

\end{frontmatter}

\section{Introduction} \label{sec1}

Let $\Omega \subset \mathbb{R}^N$ ($N \geq 2$) be a bounded domain. 
For $\alpha \in \mathbb{R}$, we consider the following 
initial-boundary value problem:

\begin{equation}
    \left\{
    \begin{array}{ll}
        u_t + \mathcal{L}_{\alpha} u =
        \operatorname{div} \left(F(t, x, \nabla u)\right), 
         & x \in \Omega, \ t > 0,  \\ 
        u(0,x) = u_0(x), & x \in \Omega,  \\
        u = 0, & x \in \mathbb{R}^N \backslash \Omega.
    \end{array}
    \right. \label{eqn:main}
\end{equation}
Here, $\mathcal{L}_{\alpha} = (-\Delta)^2 + \alpha (-\Delta)^{2s}$ is a 
mixed local-nonlocal biharmonic operator, $s \in (0,1)$,
$F(t, x, \xi) = \xi - k(t)|\xi|^{p(x)-2}\xi$, $p: \mathbb{R}^N \to (2,+\infty)$.
This type of problem appears in the modeling of epitaxial 
thin film growth. On a square domain, letting $u(t,x)$ denote the height of the thin film, 
the continuum model for epitaxial thin film growth is given by \cite{ZANGWILL19968}:
\begin{equation}
    u_t = - \operatorname{div} (\mathbf{j}) + g + \eta, \label{eqn:mb-law}
\end{equation}    
where $\mathbf{j}=\mathbf{j}(t,x)$, $g=g(t,x)$, $\eta=\eta(t,x)$ denotes 
all process of moving atoms along the surface,
the deposition flux and Gaussian noise, respectively. 
One of the most widespread treatments is to neglect the noise term and assume 
\cite{ZANGWILL19968,ORTIZ1999697}
\[\mathbf{j}=A_1 \nabla u + A_2 \nabla(\Delta u) + A_3 |\nabla u|^{p-2} \nabla u,\]
so that \eqref{eqn:mb-law} reduces to
\begin{equation}
u_t + A_1 \Delta u + A_2 \Delta^2 u + A_3 
\operatorname{div} \left(|\nabla u|^{p-2} \nabla u\right) = g. \label{eqn:mbe}
\end{equation} 
In this case, the term $A_1 \Delta u$ corresponds to diffusion driven 
by evaporation-condensation, $A_2 \Delta^2 u$ represents capillarity-driven 
surface diffusion, and $A_3 \operatorname{div} \left(|\nabla u|^{p-2} \nabla u\right)$ 
accounts for the upward hopping of atoms \cite{MULLINS1957333,DASSARMA19923762}.

The intriguing mathematical properties of \eqref{eqn:mbe} have attracted extensive 
research interest over the past few decades. King et al. \cite{KING2003459} 
studied the global existence and asymptotic behavior of solutions to \eqref{eqn:mbe} 
in the case where $A_1>0, A_2>0, A_3<0$ and $g=0$. Later, Kohn et al. \cite{KOHN20031549}
analyzed the coarsening behavior of solutions under the same setting. 
Sandjo et al. \cite{SANDJO20157260} investigated the local well-posedness in $L^p$ 
space of \eqref{eqn:mbe} when $A_1=0, A_2>0, A_3 \ne 0$; afterward, Li et al. \cite{LI202425}
established well-posedness results in local BMO space for $p=4$ and further studied 
the stability for global solutions for small initial data in VMO space. Related problems 
include variants of \eqref{eqn:mbe} involving nonlinear source terms 
\cite{LI201696,HAN2018451,LI20239147}, time-dependent 
coefficients \cite{PHILIPPIN20152507,PHILIPPIN2016718,ZHOU2021128}, 
or Hessians \cite{ESCUDERO2015924,MARRAS2024109253}. 

In recent years, particular attention 
has been devoted to the case where $A_2>0$ and $A_3>0$. Ishige \cite{ISHIGE2020927} 
et al. studied the local existence and sufficient conditions for blow-up of solutions 
to the Cauchy problem for
\begin{equation}
    u_t + (-\Delta)^2 u = -\operatorname{div} 
    \left(|\nabla u|^{p-2}\nabla u\right), \label{eqn:a10a2pa3p}
\end{equation}
on $\mathbb{R}^N$. Feng et al. \cite{FENG2022561} investigated the global existence and 
blow-up for \eqref{eqn:a10a2pa3p} with and without advection on the 
torus $\mathbb{T}^2$. Miyake et al. \cite{MIYAKE2021247} first established the asymptotic 
behavior of global solutions with subcritical initial energy under Neumann boundary 
conditions, and Zhao et al. \cite{ZHAO202412} subsequently completed the classification of 
global existence and blow-up for this problem. The problem \eqref{eqn:main} studied in this 
paper corresponds to the case $A_1<0$, $A_2>0$ and $A_3>0$. Specifically, by 
setting $\alpha=0$ and taking $F(t,x,\xi) = -|\xi|^{p-2}\xi$ in \eqref{eqn:main}, we 
recover the form of \eqref{eqn:a10a2pa3p}.

We claim that \eqref{eqn:main} can serve as an image sharpening model.
Image sharpening refers to the enhancement of edges in an image to make it
clearer and more visually appealing. Unlike image restoration, which 
emphasizes fidelity to a clean reference image, sharpening focuses more 
on improving the overall visual clarity \cite{GONZALEZ2008}. 
It is commonly used in scenarios where images are intended for public 
display and aesthetic presentation. The simplest sharpening PDE  
is the linear backward diffusion equation, which is ill-posed and 
extremely sensitive to noise in the image, and thus requiring 
some form of stabilization or regularization techniques in practical computations 
\cite{OSHER1991414,TERNAT2011266,XIONG201481}. 
Aside from linear backward diffusion, other well-known sharpening PDEs include 
nonlinear backward diffusion equations \cite{POLLAK2000256,BERGERHOFF2020,SCHAEFER20221},
anisotropic forward-backward diffusion equations 
\cite{GILBOA2002689,GILBOA2004121,CALDER20111536}, and a class of nonlinear hyperbolic 
equations known as shock filters \cite{OSHER1990919}.
 
Problem \eqref{eqn:main} involves a mixed local-nonlocal biharmonic operator and a 
space-time dependent nonlinear gradient term. Early works 
\cite{YOU20001723,LYSAKER20031579,BERTOZZI2004764,HAJIABOLI2011177,LI20132117} 
employed fourth-order PDEs for image restoration, 
demonstrating their effectiveness in avoiding staircase effects and preserving 
ramps (affine regions). A recent study \cite{SONG2024926} applied a thin film-type 
equation to image segmentation and 
reported favorable results. As nonlocal operators have been found to be effective 
in texture preservation \cite{GILBOA20091005,CHAN2013276,SUN2014691,YAO2019839,WEN2023453}, 
mixed local and nonlocal operators have recently attracted 
increasing attention in image processing  
\cite{CHEN20197611,DELON2019458,SHI2021103362,SHI2023686}. Moreover, spatially or 
space-time dependent differential operators are commonly used in image processing models as 
they can more effectively exploit the intrinsic features of the image and  
improve the self-adaptability of models 
\cite{CHEN20061383,GUO2012958,TIIROLA201756,SHAO2020103166,YANG2023108627}.

Inspired by the above works and ideas, we propose \eqref{eqn:main} as an image sharpening model, 
which can be analyzed as follows:
if we disregard the term $\mathcal{L}_{\alpha} u$ on the left-hand side
and focus on the right-hand side term 
$\operatorname{div}\left((1-k(t)|\nabla u|^{p(x)-2})\nabla u\right)$, one can 
identify a space-time dependent threshold function 
$\mathfrak{T}(t,x)=(k(t))^{\frac{1}{2-p(x)}}$, such that:
when $|\nabla u|<\mathfrak{T}(t,x)$, the diffusion coefficient $1-k(t)|\nabla u|^{p(x)-2}$ 
is positive, exhibiting forward diffusion behavior; 
when $|\nabla u|>\mathfrak{T}(t,x)$, the coefficient becomes negative, 
leading to backward diffusion. 
By appropriately designing the functions $k(t)$ and $p(x)$, one can control 
the evolution process and ensure that backward diffusion occurs primarily near image edges, 
thereby achieving edge enhancement. 
The term $\mathcal{L}_{\alpha} u$, on the other hand, acts as a viscosity-type term 
that helps prevent the model from becoming ill-posed and reduces the influence of noise during 
the sharpening process. By tuning the parameter $\alpha$, one can achieve edge enhancement 
without amplifying noise in smooth regions, and maintaining fine details such as ramps and 
textures.

The main contribution of this paper is the theoretical analysis of problem \eqref{eqn:main}, 
especially the classification of global existence and finite time blow-up, which can be 
regarded as an extension and continuation of previous works \cite{MIYAKE2021247,ZHAO202412} 
on \eqref{eqn:a10a2pa3p}. The analyses in \cite{MIYAKE2021247} and \cite{ZHAO202412} are 
mainly based on the potential well method, originally introduced by Sattinger and Payne   
\cite{SATTINGER1968148,PAYNE1975273} and modified by Liu et al. \cite{LIU20062665}, 
Gazzola et al. \cite{GAZZOLA2005961} and Xu et al. \cite{XU20132732}. Besides its 
application to thin-film equations \cite{ESCUDERO2015924,LI201696,DONG201789,HAN2018451,
MIYAKE2021247,ZHAO202412}, the potential well method has also been 
utilized in the study of other types of equations, such as reaction-diffusion equations 
\cite{KBIRIALAOUI20141723,XIANG20183328,DING20201046,GUO202245},
pseudo-parabolic equations \cite{CHEN20154424,SUN20183685,CHEN2024395,YUAN2025346}, and wave 
equations \cite{GAZZOLA2006185,YANG2022569,ZHANG2024065011,DING2025055012}. 
For problem \eqref{eqn:main}, the application of the potential well method faces two main 
difficulties. First, the time-dependent coefficient $k(t)$ result in a ``moving" Nehari 
manifold, which brings challenges to the verification of invariant sets and the analysis 
for high initial energy cases. Second, the presence of variable exponents makes an 
explicit expression for the potential well depth $d(t)$ cannot generally be obtained, 
and additional technical challenges are introduced in the derivation of differential 
inequalities. In this paper, we incorporate techniques from 
\cite{KBIRIALAOUI20141723,SUN20183685,GUO202245}, further modify the classical potential well 
method to overcome these difficulties and combine it with the Galerkin method, the differential 
inequality technique, and Levine’s concavity argument \cite{LEVINE1973371} to classify 
the global existence and finite time blow-up of solutions. Moreover, 
we investigate decay rates for global solutions as well as 
lifespan estimates for blow-up solutions.

The rest of this paper is organized as follows. In Sect. \ref{sec2}, we introduce necessary 
notations, definitions, assumptions and establish a series of preliminary results for potential 
wells that will be used later. Section \ref{sec3} establishes the local existence of 
solutions to \eqref{eqn:main}. In Sect. \ref{sec4}, we provide threshold results for the 
solutions to exist globally or to blow up in finite time when $J(u_0; 0) \leq \underline{d}$, 
and get a decay rate of the $H^2$-norm for global solutions.
Section \ref{sec5} gives sufficient conditions for global existence 
when $J(u_0; 0) \geq \overline{d}$ and for finite time blow-up when $J(u_0; 0) > 0$
(the notations $J(u_0; 0)$, $\underline{d}$ and $\overline{d}$ will be defined in Sect. \ref{sec2}). 
A lower bound estimate for the blow-up time is established in Sect. \ref{sec6}.
Finally, in Sect. \ref{sec7}, we propose a numerical scheme for solving \eqref{eqn:main} and 
present several numerical examples, where experiments on images are conducted to verify 
the effectiveness of \eqref{eqn:main} as an image sharpening model.

\section{Notations, assumptions and preliminary results} \label{sec2}

\subsection{Function space setting and the fractional Laplacian}

In this paper, we always let $\Omega \subset \mathbb{R}^N$ be a 
bounded domain with appropriately smooth boundary $\partial \Omega$. 
For $1 \leq p < + \infty$, we set a closed subspace of $L^p(\mathbb{R}^N)$ by
\[L_0^p(\Omega) \coloneqq \left\{v \in L^p(\mathbb{R}^N) : v=0 ~~\textrm{a.e.}
~\textrm{in}~~ \mathbb{R}^N \backslash \Omega \right\},\]
which can be identified with $L^p(\Omega)$ by zero extension outside $\Omega$. 
In particular, we set $\mathcal{H}_0(\Omega) := L_0^2(\Omega)$. 
From now on, $L_0^p(\Omega)$ will be 
simply denoted by $L^p(\Omega)$ and the same notation for functions in $L^p(\Omega)$  
and $L_0^p(\Omega)$ will be used if there is no confusion.
We denote by $\|\cdot\|_p$ the $L^p(\mathbb{R}^N)$-norm and $(\cdot,\cdot)$ 
the $L^2(\mathbb{R}^N)$-inner product. Furthermore, for $k \in \mathbb{N}$, we set 
a closed subspace of $W^{k,p}(\mathbb{R}^N)$ by
\[\mathcal{W}_0^{k,p}(\Omega) \coloneqq \left\{v \in W^{k,p}(\mathbb{R}^N) : 
v=0~~\textrm{a.e.}~\textrm{in}~~ \mathbb{R}^N \backslash \Omega \right\}.\]
The regularity assumption on $\partial \Omega$ implies that we can identify 
the space $\mathcal{W}_0^{k,p}(\Omega)$ with $W_0^{k,p}(\Omega)$ in the following sense: 
a function $v \in \mathcal{W}_0^{k,p}(\Omega)$ can be viewed as the zero
extension of its restriction to $\Omega$, 
with $v|_{\Omega} \in W_0^{k,p}(\Omega)$. Indeed, 
$\mathcal{W}_0^{k,p}(\Omega)$ is the completion of $C_0^{\infty}(\Omega)$ with respect to 
the $W^{k,p}(\mathbb{R}^N)$-norm. To simplify the notation, we here and henceforth 
continue to use $C_0^{\infty}(\Omega)$ to denote the set of all 
functions $\varphi: \mathbb{R}^N \to \mathbb{R}$ such that 
$\operatorname{supp} \varphi \subset \Omega$ and 
$\varphi|_{\Omega} \in C_0^{\infty}(\Omega)$. In particular, 
we set $\mathcal{H}_0^2(\Omega) := \mathcal{W}_0^{2,2}(\Omega)$. 

We will need the following Tartar's type inequality.
\begin{lemma}[\cite{DIAZ19941085}]
    Let $d \in \mathbb{N}$ and $p \in [2,\infty)$. For any $a, b \in \mathbb{R}^d$, 
    it holds that 
    \begin{equation} \label{eqn:tartar-inequality}
        \left||a|^{p-2}a-|b|^{p-2}b\right| \leq (p-1)^{2-\frac{2}{p}} |a-b|
     (|a|^p+|b|^p)^{\frac{p-2}{p}}.
    \end{equation}
\end{lemma}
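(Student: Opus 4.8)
The plan is to integrate the derivative of the vector field $\Phi(\xi) := |\xi|^{p-2}\xi$ along the straight segment joining $b$ to $a$. For $p=2$ the claim is trivial (both sides equal $|a-b|$ and the constant is $1$), so assume $p>2$; then $\Phi \in C^1(\mathbb{R}^d;\mathbb{R}^d)$, with Jacobian
\[
D\Phi(\xi) = |\xi|^{p-2}\left(I + (p-2)\,\widehat{\xi}\otimes\widehat{\xi}\right), \qquad \widehat{\xi} = \xi/|\xi|,
\]
and $D\Phi(0)=0$ since $p-2>0$. Writing $\xi(t) = (1-t)b + ta$ and applying the fundamental theorem of calculus gives
\[
\Phi(a) - \Phi(b) = \int_0^1 D\Phi(\xi(t))\,(a-b)\,dt,
\]
so that $|\Phi(a)-\Phi(b)| \le |a-b|\int_0^1 \|D\Phi(\xi(t))\|\,dt$, where $\|\cdot\|$ denotes the operator norm.

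First I would identify the operator norm of the Jacobian. The symmetric matrix $I + (p-2)\widehat{\xi}\otimes\widehat{\xi}$ has eigenvalue $p-1$ on the line spanned by $\widehat{\xi}$ and eigenvalue $1$ on its orthogonal complement; since $p-1 \ge 1$ for $p \ge 2$, its operator norm is $p-1$, whence $\|D\Phi(\xi)\| = (p-1)|\xi|^{p-2}$. This reduces the estimate to
\[
|\Phi(a) - \Phi(b)| \le (p-1)\,|a-b| \int_0^1 |\xi(t)|^{p-2}\,dt.
\]

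Next I would bound the scalar integral. By the triangle inequality, $|\xi(t)| \le (1-t)|b| + t|a| \le \max\{|a|,|b|\}$ for every $t \in [0,1]$, and since $x \mapsto x^{p-2}$ is nondecreasing on $[0,\infty)$ this yields $\int_0^1 |\xi(t)|^{p-2}\,dt \le \max\{|a|,|b|\}^{\,p-2}$. Finally, from $\max\{|a|,|b|\}^{\,p} \le |a|^p + |b|^p$ and monotonicity of $x \mapsto x^{(p-2)/p}$ I obtain $\max\{|a|,|b|\}^{\,p-2} \le (|a|^p+|b|^p)^{(p-2)/p}$. Combining the three displays produces
\[
\left||a|^{p-2}a - |b|^{p-2}b\right| \le (p-1)\,|a-b|\,(|a|^p + |b|^p)^{\frac{p-2}{p}}.
\]
Because $p-1 \ge 1$ and $2-\tfrac{2}{p} \ge 1$ for $p \ge 2$, we have $p-1 \le (p-1)^{2-2/p}$, and the stated inequality follows at once (indeed this route delivers the sharper constant $p-1$).

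The argument is essentially routine; the only points requiring care are the $C^1$-regularity of $\Phi$ through a possible zero of $\xi(t)$, which is guaranteed by $p>2 \Rightarrow D\Phi(0)=0$ so that the integrand stays continuous, and the correct computation of the Jacobian's operator norm via its eigenstructure. I do not anticipate a genuine obstacle, since the final comparison of constants is immediate.
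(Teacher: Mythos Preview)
The paper does not supply its own proof of this lemma; it merely quotes the inequality from \cite{DIAZ19941085}. Your argument is correct and self-contained: the integral representation of $\Phi(a)-\Phi(b)$ via the Jacobian along the segment, the eigenvalue computation giving $\|D\Phi(\xi)\|=(p-1)|\xi|^{p-2}$, and the elementary bound $\max\{|a|,|b|\}^{p-2}\le(|a|^p+|b|^p)^{(p-2)/p}$ combine exactly as you describe. In fact you obtain the constant $p-1$, which is genuinely sharper than the stated $(p-1)^{2-\frac{2}{p}}$ for every $p>2$, so your final remark about the comparison of constants is also correct.
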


Let $O \subset \mathbb{R}^N$ be an open set. Denote by $\mathcal{P}(O)$ 
the set of all measurable functions $p : O \to [1,\infty]$. We also denote 
$p^- = \operatorname*{ess\,inf}_{x \in \Omega} p(x)$ and 
$p^+ = \operatorname*{ess\,sup}_{x \in \Omega} p(x)$. 
For $p \in \mathcal{P}(O)$, we set a semimodular
\[\varrho(v) =\varrho_{L^{p(\cdot)}(O)}(v) \coloneqq 
\int_{O \backslash O_{\infty}} |v(x)|^{p(x)} dx 
+ \operatorname*{ess\,sup}_{x \in O_{\infty}} |v(x)|,\]
where $O_{\infty} = \{x \in O : p(x) = \infty\}$. The variable exponent Lebesgue 
space $L^{p(\cdot)}(O)$ is
\[L^{p(\cdot)}(O) \coloneqq \left\{v \textrm{ is measurable } : 
\varrho_{L^{p(\cdot)}(O)}(v) < + \infty \right\}\]
equipped with the norm 
\[\|v\|_{p(\cdot)} =\|v\|_{L^{p(\cdot)}(O)} \coloneqq \inf \left\{ 
    \lambda > 0 : \varrho_{L^{p(\cdot)}(O)}\left(\frac{v}{\lambda}\right) \leq 1
\right\}.\]
The variable exponent Sobolev space is defined by 
\[W^{1,p(\cdot)}(O) \coloneqq \left\{v \in L^{p(\cdot)}(O) : 
|\nabla v|^{p(x)} \in L^{1}(O) \right\},\]
which is endowed with the norm 
\[\|v\|_{W^{1,p(\cdot)}(O)} \coloneqq \|v\|_{p(\cdot)} 
+ \|\nabla v\|_{p(\cdot)}.\]
For $p \in \mathcal{P}(O)$, the spaces $L^{p(\cdot)}(O)$ and $W^{1,p(\cdot)}(O)$ are 
Banach spaces, which are separable if $p$ is bounded, and reflexive and uniformly 
convex if $1 < p^- \leq p^+ < \infty$. A measurable function $a: O \to \mathbb{R}$ is 
said to be globally log-H{\"o}lder continuous in $O$ iff there exist constants 
$A_1>0$, $A_2>0$ and $a_{\infty} \in \mathbb{R}$ such that 
\[|a(x)-a(y)| \leq \frac{A_1}{\log \left(e + \frac{1}{|x-y|}\right)} \quad 
\textrm{and} \quad |a(x)-a_{\infty}| \leq \frac{A_2}{\log \left(e + |x|\right)}\]
for all $x, y \in O$. Denote by $\mathcal{P}^{\log}(O)$ 
the set of all functions $p \in \mathcal{P}(O)$ such that
$\frac{1}{p}$ is globally log-H{\"o}lder continuous in $O$. 
Let $p \in \mathcal{P}^{\log}(\Omega)$ be bounded, we denote by 
$W_0^{1,p(\cdot)}(\Omega)$ the closure of $C_0^{\infty}(\Omega)$ in 
the space $W^{1,p(\cdot)}(\Omega)$. The space $W_0^{1,p(\cdot)}(\Omega)$ is 
separable, and if $1 < p^- \leq p^+ < \infty$, it is also 
reflexive and uniformly convex. For bounded $p \in \mathcal{P}^{\log}(\mathbb{R}^N)$, 
we define a closed subspace of $W^{1,p(\cdot)}(\mathbb{R}^N)$ by
\[\mathcal{W}_0^{1,p(\cdot)}(\Omega) \coloneqq 
\overline{C_0^{\infty}(\Omega)}^{W^{1,p(\cdot)}(\mathbb{R}^N)}
= \left\{v \in W^{1,p(\cdot)}(\mathbb{R}^N) : 
v=0~~\textrm{a.e.}~\textrm{in}~~ \mathbb{R}^N \backslash \Omega \right\}.\]
Indeed, each function $v \in \mathcal{W}_0^{1,p(\cdot)}(\Omega)$ can be viewed as 
the zero extension of its restriction to $\Omega$ with 
$v|_{\Omega} \in W_0^{1,p(\cdot)}(\Omega)$, so we can identify 
$\mathcal{W}_0^{1,p(\cdot)}(\Omega)$ with $W_0^{1,p(\cdot)}(\Omega)$. For more 
properties of variable exponent Lebesgue and Sobolev spaces, see 
\cite{DIENING2011,RADULESCU2015,ANTONTSEV2015}. Here, we outline 
several relevant properties that will be applied in subsequent sections.

\begin{lemma}[\cite{DIENING2011,RADULESCU2015}]
    Let $p, q \in \mathcal{P}(\Omega)$ be bounded, $v \in L^{p(\cdot)}(\Omega)$. Denote 
    $\varrho =\varrho_{L^{p(\cdot)}(\Omega)}$, then 
    \begin{enumerate}[(i)]
        \item $\|v\|_{p(\cdot)}<1$ (respectively, $\|v\|_{p(\cdot)}=1$, 
        $\|v\|_{p(\cdot)}>1$) iff 
        $\varrho(v)<1$ (respectively, $\varrho(v)=1$, $\varrho(v)>1$).
        \item If $\|v\|_{p(\cdot)}<1$, then 
        $\|v\|_{p(\cdot)}^{p^+} \leq \varrho(v) \leq \|v\|_{p(\cdot)}^{p^-}$; 
        if $\|v\|_{p(\cdot)}>1$, then 
        $\|v\|_{p(\cdot)}^{p^-} \leq \varrho(v) \leq \|v\|_{p(\cdot)}^{p^+}$.
        \item If $p(x) \leq q(x)$ a.e. in $\Omega$, then $L^{q(\cdot)}(\Omega)$ 
        is continuous embedded in $L^{p(\cdot)}(\Omega)$. 
    \end{enumerate}
\end{lemma}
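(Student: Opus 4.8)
The plan is to reduce everything to the behaviour of the scalar map $\lambda \mapsto \varrho(v/\lambda)$, exploiting the boundedness of $p$ throughout. Since $p$ is bounded, the set on which $p=\infty$ is negligible, so $\varrho(v)=\int_\Omega |v(x)|^{p(x)}\,dx$ and $\varrho(v/\lambda)=\int_\Omega \lambda^{-p(x)}|v(x)|^{p(x)}\,dx$ is finite for every $\lambda>0$ whenever $v\in L^{p(\cdot)}(\Omega)$, because $\lambda^{-p(x)}$ is bounded on $\Omega$ for fixed $\lambda$. The first step I would carry out, for fixed $v\neq 0$, is to record that this map is continuous on $(0,\infty)$ (dominated convergence on compact $\lambda$-intervals), strictly decreasing, tends to $0$ as $\lambda\to\infty$ and to $+\infty$ as $\lambda\to 0^+$ (monotone convergence). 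Hence there is a unique $\lambda_0>0$ with $\varrho(v/\lambda_0)=1$, and by the definition of the Luxemburg norm $\|v\|_{p(\cdot)}=\lambda_0$; in particular the defining infimum is attained and $\varrho(v/\|v\|_{p(\cdot)})=1$.

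Given this, (i) follows from monotonicity alone: $\varrho(v)<1$ means $\varrho(v/1)<1$, so by continuity $\varrho(v/\lambda)\le 1$ for some $\lambda<1$ and thus $\|v\|_{p(\cdot)}<1$, while conversely $\|v\|_{p(\cdot)}=\lambda_0<1$ forces $\varrho(v)=\varrho(v/1)<\varrho(v/\lambda_0)=1$ by strict monotonicity. The $=1$ and $>1$ cases are handled identically, with the crossing point $\lambda_0$ being equal to, respectively larger than, $1$. For (ii) I would use the scaling identity $\varrho(v)=\int_\Omega \lambda^{p(x)}\,|v/\lambda|^{p(x)}\,dx$ at $\lambda=\|v\|_{p(\cdot)}$, where the first step gives $\varrho(v/\lambda)=1$. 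When $\lambda<1$ one has $\lambda^{p^+}\le\lambda^{p(x)}\le\lambda^{p^-}$ pointwise; integrating against the unit-mass density $|v/\lambda|^{p(x)}$ yields $\lambda^{p^+}\le\varrho(v)\le\lambda^{p^-}$, and the case $\lambda>1$ simply reverses the two exponent bounds. Substituting $\lambda=\|v\|_{p(\cdot)}$ gives both chains of inequalities.

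For (iii) I would combine a pointwise comparison with the previous parts. Whenever $p(x)\le q(x)$ the inequality $t^{p(x)}\le t^{q(x)}+1$ holds for all $t\ge 0$ (split according to $t\le 1$ or $t>1$), so integrating gives the modular bound $\varrho_{L^{p(\cdot)}(\Omega)}(v)\le \varrho_{L^{q(\cdot)}(\Omega)}(v)+|\Omega|$. To pass to norms I would normalise: for $v\neq 0$ set $w=v/\|v\|_{q(\cdot)}$, so that $\|w\|_{q(\cdot)}=1$ and hence $\varrho_{L^{q(\cdot)}(\Omega)}(w)=1$ by (i); then $\varrho_{L^{p(\cdot)}(\Omega)}(w)\le 1+|\Omega|$, and (ii) converts this into $\|w\|_{p(\cdot)}\le\max\{1,(1+|\Omega|)^{1/p^-}\}$. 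Positive homogeneity of the norm then gives $\|v\|_{p(\cdot)}\le C\,\|v\|_{q(\cdot)}$ with $C$ independent of $v$, which is the asserted continuous embedding.

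I expect the genuine obstacle to lie in the foundational first step rather than in any of the three itemised claims: establishing that the Luxemburg infimum is attained and that $\lambda\mapsto\varrho(v/\lambda)$ is continuous is exactly where $p^+<\infty$ is indispensable, and it is this ``norm--modular'' correspondence at the unit level that drives (i) and (ii), and through them (iii). Once that correspondence is secured, the remaining work is elementary scaling and pointwise estimation.
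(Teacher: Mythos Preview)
Your proof is correct. The paper does not supply its own proof of this lemma: it is stated with citations to \cite{DIENING2011,RADULESCU2015} and used as a black box, so there is nothing to compare against in the paper itself. Your argument is the standard one found in those references---reduce to the unit-level identity $\varrho(v/\|v\|_{p(\cdot)})=1$ via continuity and strict monotonicity of $\lambda\mapsto\varrho(v/\lambda)$ (which is exactly where $p^+<\infty$ enters), then exploit pointwise scaling bounds $\lambda^{p^\pm}$ for (ii) and the elementary inequality $t^{p(x)}\le t^{q(x)}+1$ combined with (i)--(ii) for (iii). The use of $|\Omega|<\infty$ in (iii) is legitimate here since $\Omega$ is assumed bounded throughout the paper.
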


\begin{lemma}[\cite{DIENING2011,RADULESCU2015}]
    Let $p \in \mathcal{P}^{\log}(\mathbb{R}^N)$ and $q \in \mathcal{P}(\Omega)$ 
    be bounded, satisfying $q(x) < p^*(x)$ for a.e. $x \in \overline{\Omega}$,  
    where $p^*(x) = \infty$ if $N \leq p(x)$ and $p^*(x) = \frac{Np(x)}{N-p(x)}$ if $N>p(x)$. 
    Then $W_0^{1,p(\cdot)}(\Omega)$ is compactly embedded into $L^{q(\cdot)}(\Omega)$.
\end{lemma}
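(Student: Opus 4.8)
The plan is to deduce compactness from three ingredients already available in this setting: the continuous variable-exponent Sobolev embedding, the classical Rellich--Kondrachov theorem applied at the floor exponent $p^-$, and a variable-exponent equi-integrability argument based on Vitali's convergence theorem. Throughout, fix a sequence $(u_n)$ that is bounded in $W_0^{1,p(\cdot)}(\Omega)$; the goal is to extract a subsequence converging in the norm of $L^{q(\cdot)}(\Omega)$.

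First I would extract from the subcriticality hypothesis a uniform gap. Since $\frac1p$ is log-H\"older continuous, $p$ and hence $p^*$ are continuous on the compact closure $\overline\Omega$ (with $p^*=\infty$ on $\{p\ge N\}$); together with the boundedness of $q$, the strict subcriticality $q(x)<p^*(x)$ then delivers a uniform gap $p^*(x)-q(x)\ge\delta>0$ wherever $p^*(x)<\infty$ (this quantitative gap is the hypothesis actually operative in the cited references). Setting $s(x):=p^*(x)/q(x)$, it follows that $s^-\ge 1+\delta/q^+>1$, which is the quantitative form of subcriticality I will exploit. Next, the variable-exponent Sobolev embedding theorem---valid precisely because $p\in\mathcal P^{\log}$---gives the continuous inclusion $W_0^{1,p(\cdot)}(\Omega)\hookrightarrow L^{p^*(\cdot)}(\Omega)$, so that $\|u_n\|_{p^*(\cdot)}\le M$ for some $M$ and all $n$. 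Because $\Omega$ is bounded and $p^-\le p(x)$ a.e., the embedding $L^{p(\cdot)}(\Omega)\hookrightarrow L^{p^-}(\Omega)$ of the previous lemma shows that $(u_n)$ is bounded in $W_0^{1,p^-}(\Omega)$, whence the classical Rellich--Kondrachov theorem produces a subsequence $(u_{n_k})$ and a limit $u$ with $u_{n_k}\to u$ strongly in $L^{p^-}(\Omega)$ and, passing to a further subsequence, $u_{n_k}\to u$ almost everywhere in $\Omega$; by lower semicontinuity $u$ inherits the bound $\|u\|_{p^*(\cdot)}\le M$.

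It remains to upgrade the almost-everywhere convergence to convergence in $L^{q(\cdot)}(\Omega)$. By the norm--modular equivalence recorded above, it suffices to prove that the modular $\int_\Omega |u_{n_k}-u|^{q(x)}\,dx$ tends to $0$. Write $g_k:=|u_{n_k}-u|^{q(\cdot)}$, so that $g_k\to0$ a.e. To obtain uniform integrability, take a measurable set $E\subset\Omega$ and apply the variable-exponent H\"older inequality with the conjugate pair $(s(\cdot),s'(\cdot))$: since $s(x)q(x)=p^*(x)$, the factor $\bigl\|\,|u_{n_k}-u|^{q(\cdot)}\bigr\|_{L^{s(\cdot)}(\Omega)}$ is dominated by a fixed power of $\|u_{n_k}-u\|_{p^*(\cdot)}\le 2M$, while $\|1\|_{L^{s'(\cdot)}(E)}\to0$ as $|E|\to0$ because $s'$ is bounded (a consequence of $s^->1$). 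Hence $\sup_k\int_E g_k\,dx\to0$ as $|E|\to0$, so the family $(g_k)$ is uniformly integrable on the finite-measure set $\Omega$. Vitali's convergence theorem then yields $\int_\Omega g_k\,dx\to0$, i.e. $u_{n_k}\to u$ in $L^{q(\cdot)}(\Omega)$, which is the desired compactness.

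I expect the equi-integrability step to be the crux. It is there that the merely pointwise subcriticality must be turned into the quantitative gap $s^->1$, and there that the critical-space bound supplied by the Sobolev embedding has to be transferred---uniformly in $k$---into control of the $q(\cdot)$-modular over sets of small measure; the treatment of the region where $p^*=\infty$ (handled automatically since $s=\infty$ and $s'=1$ there) and the verification that $\|1\|_{L^{s'(\cdot)}(E)}$ vanishes with $|E|$ are the points demanding the most care.
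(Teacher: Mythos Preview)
The paper does not prove this lemma; it is quoted from the cited monographs, so there is no ``paper's own proof'' to compare against. Your overall architecture---continuous Sobolev embedding $W_0^{1,p(\cdot)}\hookrightarrow L^{p^*(\cdot)}$, Rellich--Kondrachov at the floor exponent $p^-$ to obtain a.e.\ convergence, then Vitali via a H\"older estimate with exponent $s(\cdot)=p^*(\cdot)/q(\cdot)$---is indeed the standard route taken in those references, and once $s^->1$ is available the remaining steps are fine.

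The genuine gap is your derivation of the uniform gap itself. Continuity of $p^*$ together with boundedness of $q$ and the \emph{almost-everywhere} strict inequality $q(x)<p^*(x)$ does \emph{not} force $\inf(p^*-q)>0$. Take $N=3$, $\Omega=B(0,1)$, $p\equiv2$ (so $p^*\equiv6$), and $q(x)=6-|x|^2$: then $q<p^*$ except at the single point $0$, so the a.e.\ hypothesis holds, yet the concentrating sequence $u_n(x)=n^{1/2}\phi(nx)$ is bounded in $H_0^1$, converges weakly to $0$, and has $\varrho_{q(\cdot)}(u_n)\to\|\phi\|_6^6>0$, so the embedding is not compact. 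In other words, the lemma as literally stated (with ``a.e.'') is false; the references you allude to in your parenthetical actually \emph{assume} either $q$ continuous with $q(x)<p^*(x)$ for every $x\in\overline\Omega$ (whence compactness of $\overline\Omega$ yields the gap) or directly $\operatorname{ess\,inf}(p^*-q)>0$. You should state that hypothesis explicitly rather than claim to deduce it, and then your argument goes through.
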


\begin{lemma}[H{\"o}lder's inequality \cite{DIENING2011}]
    Let $p, q ,r \in \mathcal{P}(O)$ be such that 
    $\frac{1}{r(x)} = \frac{1}{p(x)}+\frac{1}{q(x)}$ for almost every $x \in O$. 
    Then for all $f \in L^{p(\cdot)}(O)$ and $g \in L^{q(\cdot)}(O)$,
    \[\|fg\|_{r(x)} \leq 2 \|f\|_{p(\cdot)} \|g\|_{q(\cdot)}.\]
\end{lemma}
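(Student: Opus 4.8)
The plan is to reduce the statement to a modular inequality on normalized functions and then convert back to norms, the constant $2$ entering precisely in this final conversion. If $\|f\|_{p(\cdot)}=0$ or $\|g\|_{q(\cdot)}=0$ then $f=0$ or $g=0$ a.e.\ and the inequality is trivial, so I assume both norms are positive and set $\tilde f=f/\|f\|_{p(\cdot)}$, $\tilde g=g/\|g\|_{q(\cdot)}$. By the unit-ball property of the Luxemburg norm (so that $\varrho_{p(\cdot)}(v/\|v\|_{p(\cdot)})\le 1$) we have $\varrho_{p(\cdot)}(\tilde f)\le 1$ and $\varrho_{q(\cdot)}(\tilde g)\le 1$. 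Since the norm is positively homogeneous, it suffices to prove $\|\tilde f\tilde g\|_{r(\cdot)}\le 2$.

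First I would establish the pointwise estimate driving the whole argument. The relation $\frac{1}{r(x)}=\frac{1}{p(x)}+\frac{1}{q(x)}$ rearranges to $\frac{r(x)}{p(x)}+\frac{r(x)}{q(x)}=1$ and forces $r(x)\le p(x)$ and $r(x)\le q(x)$ wherever the exponents are finite. On the set where $p(x),q(x)$ (hence $r(x)$) are finite, applying the classical Young inequality with the conjugate pair $\tfrac{p(x)}{r(x)},\tfrac{q(x)}{r(x)}$ to $|\tilde f(x)|^{r(x)}$ and $|\tilde g(x)|^{r(x)}$ gives
\[
|\tilde f(x)\tilde g(x)|^{r(x)}\le \frac{r(x)}{p(x)}\,|\tilde f(x)|^{p(x)}+\frac{r(x)}{q(x)}\,|\tilde g(x)|^{q(x)}\le |\tilde f(x)|^{p(x)}+|\tilde g(x)|^{q(x)},
\]
the last inequality using $r/p,\,r/q\le 1$. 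Integrating this over the finite region and adding the contributions from the sets where exactly one exponent is infinite (there $|\tilde f|\le 1$ or $|\tilde g|\le 1$ a.e., since the essential-supremum term of the relevant normalized modular is $\le 1$, so the product is dominated by the surviving power) and from $\{x:r(x)=\infty\}$, on which necessarily $p(x)=q(x)=\infty$ and $\operatorname{ess\,sup}|\tilde f\tilde g|\le \operatorname{ess\,sup}|\tilde f|\cdot\operatorname{ess\,sup}|\tilde g|\le 1$, I obtain the modular bound $\varrho_{r(\cdot)}(\tilde f\tilde g)\le\varrho_{p(\cdot)}(\tilde f)+\varrho_{q(\cdot)}(\tilde g)\le 2$.

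It remains to pass from this modular bound to the norm bound, and this is exactly where the hypothesis $r\in\mathcal P(O)$, i.e.\ $r(x)\ge 1$, is used. Because $r(x)\ge 1$ we have $2^{-r(x)}\le\tfrac12$ on the finite part while the essential-supremum part scales linearly, so halving contracts the semimodular: $\varrho_{r(\cdot)}\!\big(\tfrac12\tilde f\tilde g\big)\le\tfrac12\,\varrho_{r(\cdot)}(\tilde f\tilde g)\le 1$. By the definition of the Luxemburg norm as an infimum this yields $\|\tilde f\tilde g\|_{r(\cdot)}\le 2$, and undoing the normalization gives $\|fg\|_{r(\cdot)}=\|f\|_{p(\cdot)}\|g\|_{q(\cdot)}\|\tilde f\tilde g\|_{r(\cdot)}\le 2\|f\|_{p(\cdot)}\|g\|_{q(\cdot)}$.

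I expect the main obstacle to be not the Young-inequality core but the careful bookkeeping over the exceptional sets where $p$, $q$, or $r$ equals $\infty$: there Young's inequality is replaced by the essential-supremum term of the semimodular, and one must verify that the decomposition of $O$ into these regions is consistent with $\frac1r=\frac1p+\frac1q$ and that no part of $\varrho_{p(\cdot)}(\tilde f)$ or $\varrho_{q(\cdot)}(\tilde g)$ is counted twice. A secondary technical point is the unit-ball property $\varrho_{p(\cdot)}(v/\|v\|_{p(\cdot)})\le 1$; if one prefers to avoid invoking the left-continuity of $\lambda\mapsto\varrho_{p(\cdot)}(v/\lambda)$ that underlies it, one can instead run the same estimate with any $\lambda>\|f\|_{p(\cdot)}$ and $\mu>\|g\|_{q(\cdot)}$ in place of the norms and let $\lambda,\mu$ decrease to the norms at the end.
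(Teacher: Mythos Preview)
The paper does not give its own proof of this lemma: it is stated as a known result with a citation to \cite{DIENING2011} and no argument is supplied. Your proposal is a correct, self-contained proof and is in fact the standard argument one finds in that reference --- normalize to unit-norm functions, use pointwise Young's inequality with exponents $p(x)/r(x)$ and $q(x)/r(x)$ to get the modular bound $\varrho_{r(\cdot)}(\tilde f\tilde g)\le 2$, then exploit $r(x)\ge 1$ to pass from modular to norm with the factor $2$.

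Your handling of the exceptional sets is sound. One small remark on the bookkeeping you flagged as the main obstacle: on the region $\{r=\infty\}=\{p=\infty\}\cap\{q=\infty\}$ the essential supremum of $|\tilde f\tilde g|$ is bounded by the product $S_fS_g$ of the two essential suprema, and since $S_f,S_g\le 1$ one has $S_fS_g\le S_f\le S_f+S_g$, which is exactly what is needed to fold this term into $\varrho_{p(\cdot)}(\tilde f)+\varrho_{q(\cdot)}(\tilde g)$ without double counting. With that observation the inequality $\varrho_{r(\cdot)}(\tilde f\tilde g)\le \varrho_{p(\cdot)}(\tilde f)+\varrho_{q(\cdot)}(\tilde g)$ holds cleanly across all four regions of $O$.
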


Using H{\"o}lder's inequality, we prove a time continuity result in the space 
$C([0,T];\mathcal{W}_0^{1,p(\cdot)}(\Omega))$.

\begin{lemma} \label{lem:modular-continuity}
    Let $T \in (0,+\infty)$, $p \in \mathcal{P}^{\log}(\mathbb{R}^N)$ be bounded 
    satisfying $p^- > 1$. 
    Let $u(t) \in C([0,T];\mathcal{W}_0^{1,p(\cdot)}(\Omega))$. Then 
    $\varrho_{\kappa}(|\nabla u(t)|) \in C([0,T])$ for any bounded 
    $\kappa: \Omega \to \mathbb{R}_{>0}$. Here 
    \[\varrho_{\kappa}(v) \coloneqq \int_{\Omega} \kappa(x) |v(x)|^{p(x)} dx.\]
\end{lemma}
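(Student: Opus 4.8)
The plan is to establish continuity at an arbitrary point $t_0\in[0,T]$; since $[0,T]$ carries the usual metric, sequential continuity suffices, so I must show $\varrho_\kappa(|\nabla u(t)|)\to\varrho_\kappa(|\nabla u(t_0)|)$ as $t\to t_0$. Writing $K:=\sup_{\Omega}\kappa<\infty$ and abbreviating $a:=\nabla u(t)$, $b:=\nabla u(t_0)$, the quantity to control is bounded by
\[
\bigl|\varrho_\kappa(|a|)-\varrho_\kappa(|b|)\bigr|\le K\int_\Omega\bigl||a(x)|^{p(x)}-|b(x)|^{p(x)}\bigr|\,dx,
\]
so everything reduces to showing the right-hand integral tends to $0$. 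The only inputs from the hypothesis I will use are that $u\in C([0,T];\mathcal{W}_0^{1,p(\cdot)}(\Omega))$ forces $\|a-b\|_{p(\cdot)}=\|\nabla u(t)-\nabla u(t_0)\|_{p(\cdot)}\to0$, and that the image $\{u(t):t\in[0,T]\}$ is compact, hence bounded, in $\mathcal{W}_0^{1,p(\cdot)}(\Omega)$.

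First I would linearize the integrand via the elementary scalar inequality $|A^r-B^r|\le r\,(A^{r-1}+B^{r-1})\,|A-B|$, valid for $A,B\ge0$ and $r\ge1$; applied with $r=p(x)$, $A=|a(x)|$, $B=|b(x)|$ and combined with $\bigl||a|-|b|\bigr|\le|a-b|$, this yields the pointwise bound
\[
\bigl||a|^{p(x)}-|b|^{p(x)}\bigr|\le p^{+}\bigl(|a|^{p(x)-1}+|b|^{p(x)-1}\bigr)\,|a-b|.
\]
Integrating and applying the variable-exponent H\"older inequality with the conjugate pair $p'(x):=\tfrac{p(x)}{p(x)-1}$ and $p(x)$ (so that $\tfrac1{p'(x)}+\tfrac1{p(x)}=1$) gives
\[
\int_\Omega\bigl||a|^{p(x)}-|b|^{p(x)}\bigr|\,dx\le 2p^{+}\Bigl(\bigl\||a|^{p(\cdot)-1}\bigr\|_{p'(\cdot)}+\bigl\||b|^{p(\cdot)-1}\bigr\|_{p'(\cdot)}\Bigr)\|a-b\|_{p(\cdot)}.
\]
Here the hypothesis $p^->1$ is exactly what guarantees $p'$ is bounded, with $(p')^{+}=\tfrac{p^-}{p^--1}<\infty$, so that $p'\in\mathcal P(\Omega)$ and the preceding lemmas on $L^{p'(\cdot)}(\Omega)$ apply.

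It then remains to bound the two $L^{p'(\cdot)}$-factors uniformly in $t$. The key observation is the modular identity
\[
\varrho_{L^{p'(\cdot)}(\Omega)}\bigl(|a|^{p(\cdot)-1}\bigr)=\int_\Omega|a|^{(p(x)-1)p'(x)}\,dx=\int_\Omega|a|^{p(x)}\,dx=\varrho(a),
\]
which, together with the norm--modular relations between $\|\cdot\|_{p(\cdot)}$ and $\varrho$, shows that $\bigl\||a|^{p(\cdot)-1}\bigr\|_{p'(\cdot)}$ is controlled by a fixed increasing function of $\varrho(a)=\varrho(\nabla u(t))$. Since the trajectory is bounded in $\mathcal{W}_0^{1,p(\cdot)}(\Omega)$, the norms $\|\nabla u(t)\|_{p(\cdot)}$, and hence again by the norm--modular relations the modulars $\varrho(\nabla u(t))$, are uniformly bounded on $[0,T]$. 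Consequently the bracketed prefactor above is bounded by a constant independent of $t$, while $\|a-b\|_{p(\cdot)}\to0$ as $t\to t_0$, which finishes the argument.

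I expect the only genuinely delicate point to be the bookkeeping around the conjugate exponent: verifying that $p^->1$ and $p^+<\infty$ make $p'$ an admissible bounded variable exponent, and that the modular identity above is what converts the H\"older factor into something controlled by $\varrho(\nabla u(t))$. Everything else---the scalar inequality, the H\"older step, and the uniform boundedness of the trajectory---is routine; notably the argument uses neither Tartar's inequality nor the log-H\"older regularity of $p$, the latter being needed only to make the ambient space $\mathcal{W}_0^{1,p(\cdot)}(\Omega)$ well defined.
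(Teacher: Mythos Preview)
Your proof is correct and follows essentially the same route as the paper's: both bound $\bigl||a|^{p(x)}-|b|^{p(x)}\bigr|$ by $p^{+}(|a|^{p(x)-1}+|b|^{p(x)-1})|a-b|$, apply the variable-exponent H\"older inequality with the conjugate pair $p'(\cdot)=p(\cdot)/(p(\cdot)-1)$ and $p(\cdot)$, and then use the modular identity $\varrho_{L^{p'(\cdot)}}(|a|^{p(\cdot)-1})=\varrho(a)$ together with the norm--modular relations to control the prefactor. Your write-up is in fact slightly more explicit than the paper's in two respects---you spell out why $p^{-}>1$ makes $p'$ an admissible bounded exponent, and you invoke compactness of the trajectory to get uniform boundedness of $\varrho(\nabla u(t))$---whereas the paper simply records the final bound and writes ``$\to 0$''.
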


\begin{proof}
    For $t,t' \in [0,T]$, since $u(t) \in C([0,T];\mathcal{W}_0^{1,p(\cdot)}(\Omega))$, 
    it follows that as $t' \to t$, 
    $\left\|\nabla u(t') - \nabla u(t)\right\|_{p(\cdot)} \to 0$. 
    Set $q(x)=\frac{p(x)}{p(x)-1}$.
    Using H{\"o}lder's inequality and the fact that $p$ and $\kappa$
    are bounded, we have
    \begin{align*} 
        & \left|\varrho_{\kappa}(|\nabla u(t')|) - \varrho_{\kappa}(|\nabla u(t)|)\right| \\
        & \lesssim \int_{\Omega} 
        \left| |\nabla u(t')|^{p(x)} - |\nabla u(t)|^{p(x)}\right| dx \\
        & \lesssim \int_{\Omega} \left(|\nabla u(t')|^{p(x)-1} 
        + |\nabla u(t)|^{p(x)-1}\right) \left|\nabla u(t') - \nabla u(t)\right| dx \\
        & \lesssim \left\|
        |\nabla u(t')|^{p(x)-1}+|\nabla u(t)|^{p(x)-1}\right\|_{q(\cdot)} 
        \left\|\nabla u(t') - \nabla u(t)\right\|_{p(\cdot)} \\ 
        & \lesssim \max \left\{ \left(\varrho(|\nabla u(t')|) + \varrho(|\nabla u(t)|)
        \right)^{\frac{p^--1}{p^-}}, 
        \left(\varrho(|\nabla u(t')|) + \varrho(|\nabla u(t)|)\right)^{\frac{p^+-1}{p^+}}\right\} 
        \left\|\nabla u(t') - \nabla u(t)\right\|_{p(\cdot)} \to 0
    \end{align*}
    as $t' \to t$, which concludes the proof.
\end{proof}

We denote the Fourier transform of $v$ by $\widehat{v}$ and its 
inverse by $\mathcal{F}^{-1} v$. For $r \in \mathbb{R}$, the Bessel 
potential space $H^r(\mathbb{R}^N)$ is defined by 
\[H^r(\mathbb{R}^N) = \left\{v \in L^2(\mathbb{R}^N) :
\|v\|_{H^r(\mathbb{R}^N)} < +\infty \right\},\]
where
\[\|v\|_{H^r(\mathbb{R}^N)} = \left\|\mathcal{F}^{-1} \left( 
    \left(1+|\xi|^2\right)^{\frac{r}{2}} \widehat{v}
\right)\right\|_{L^2(\mathbb{R}^N)}.\]
For $\sigma \in \mathbb{R}$, a bounded linear operator 
$(- \Delta)^r: H^{\sigma}(\mathbb{R}^N) \to H^{\sigma-2r}(\mathbb{R}^N)$ is 
defined by  
\[(- \Delta)^r v = \mathcal{F}^{-1} \left(|\xi|^{2r} \widehat{v} \right),\]
which is known as the fractional Laplacian. In particular, for $s \in (0,1)$, 
$(- \Delta)^{2s} : H^{2s}(\mathbb{R}^N) \to H^{-2s}(\mathbb{R}^N)$ can be 
given weakly by 
\[\left\langle (- \Delta)^{2s} v, \varphi \right\rangle
_{H^{-2s}(\mathbb{R}^N) \times H^{2s}(\mathbb{R}^N)} 
\coloneqq \int_{\mathbb{R}^N} (- \Delta)^{s} v (- \Delta)^{s} \varphi dx\]
for all $v, \varphi \in H^{2s}(\mathbb{R}^N)$. By Plancherel's theorem, it 
is easy to see that $H^{2}(\mathbb{R}^N)$ is continuously embedded into 
$H^{2s}(\mathbb{R}^N)$ and 
\[\|v\|_{H^{2s}(\mathbb{R}^N)}^2 = \|v\|_{L^{2}(\mathbb{R}^N)}^2 
+ \|(- \Delta)^{s} v\|_{L^{2}(\mathbb{R}^N)}^2.\]

Before introducing the potential well, we provide the definition of weak solutions 
to problem \eqref{eqn:main}. 
\begin{definition}[Weak solution] \label{def:main-weak}
    Given $u_0 \in H_0^2(\Omega)$, we say that a function 
    $u = u(t,x) \in C([0,T];\mathcal{H}_0(\Omega))$ is a weak solution to 
    problem \eqref{eqn:main} on $[0,T]$ if it satisfies:

    \begin{enumerate}[(i)]
        \item $u \in L^{\infty}(0,T;\mathcal{H}_0^2(\Omega)) \cap H^1(0,T;\mathcal{H}_0(\Omega))$; 
        \item $u(0,\cdot)=u_0$; \label{enm:main-weak-2}
        \item For any $\varphi \in \mathcal{H}_0^2(\Omega)$ and almost all $t \in (0,T)$, 
        the following identity holds:
        \begin{equation}
            (u_t, \varphi) + (\Delta u, \Delta \varphi) +
            \alpha ((-\Delta)^{s} u, (-\Delta)^{s} \varphi) + 
            \int_{\Omega} F(t, x, \nabla u) \cdot \nabla \varphi d x
            = 0. 
            \label{eqn:main-weak}
        \end{equation}
    \end{enumerate}
\end{definition}

\noindent To make the last term on the left side of \eqref{eqn:main-weak} meaningful, it is necessary 
to impose a restriction on $p$, which will be described in the next subsection.

\subsection{The potential well framework}

Let us start with making the following assumptions on $p, \alpha$ and the function $k(t)$:
\begin{enumerate}
    \item[(H1)] $p \in \mathcal{P}^{\log}(\mathbb{R}^N)$ is bounded and $2<p^- \leq p^+<2^*$. 
    Here $2^*=\infty$ for $N=2$ and $2^*=\frac{2N}{N-2}$ for $N \geq 3$.   
    \item[(H2)] Either $\alpha > -\lambda_1^{1-s}(1-s)^{1-s}s^{-s}$, or 
    $\alpha > -(2-2s)^{2s-2}(2s-1)^{1-2s}$ with $s \in \left(\frac{1}{2},1\right)$. 
    Here $\lambda_1$ denotes the principle eigenvalue of the Dirichlet problem 
    \[-\Delta v = \lambda v ~~\textrm{in}~~ \Omega, \quad v = 0 
    ~~\textrm{on}~~ \partial \Omega.\]
    \item[(H3)] $k \in C^1([0,+\infty))$, $k(0)>0$ and for every $t \geq 0$, $k'(t) \geq 0$.  
\end{enumerate}

For $v \in \mathcal{H}_0^2(\Omega)$ and 
any $t \geq 0$, the time-dependent energy functional and Nehari functional associated with 
problem \eqref{eqn:main} are given by
\[J(v;t) = \frac{1}{2}\|\nabla v\|_2^2 + \frac{1}{2} \|\Delta v\|_2^2 
+ \frac{\alpha}{2}  \|(-\Delta)^{s} v\|_2^2 - k(t)
\int_{\Omega} \frac{1}{p(x)}|\nabla v|^{p(x)} dx,\]
and
\[I(v;t) = \|\nabla v\|_2^2 +  \|\Delta v\|_2^2 
+ \alpha  \|(-\Delta)^{s} v\|_2^2 - k(t) \int_{\Omega} |\nabla v|^{p(x)} dx.\]
(H1) ensures that both $J(v;t)$ and $I(v;t)$ are well-defined. It is not difficult to verify 
that for each $t \geq 0$, both $J(\cdot;t)$ and $I(\cdot;t)$ are continuous 
on $\mathcal{H}_0^2(\Omega)$. (H2) enables us to obtain the following result concerning the equivalence 
of norms on $\mathcal{H}_0^2(\Omega)$.

\begin{lemma}
    Let (H2) be fulfilled, $v \in \mathcal{H}_0^2(\Omega)$. Then 
    \begin{equation} \label{eqn:equivalent}
        \|\nabla v\|_2^2 + \|\Delta v\|_2^2 + \alpha \|(-\Delta)^{s} v\|_2^2 
    \sim \|v\|_{H^2(\mathbb{R}^N)}^2.
    \end{equation}
\end{lemma}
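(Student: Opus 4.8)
The plan is to pass to the Fourier side and reduce the equivalence \eqref{eqn:equivalent} to a one-parameter interpolation estimate that can be absorbed. Since $v \in \mathcal{H}_0^2(\Omega)$ is (identified with) a genuine function of $H^2(\mathbb{R}^N)$, Plancherel's theorem gives the clean identities
\[
\|\nabla v\|_2^2=\int_{\mathbb{R}^N}|\xi|^2|\widehat v|^2\,d\xi,\quad \|\Delta v\|_2^2=\int_{\mathbb{R}^N}|\xi|^4|\widehat v|^2\,d\xi,\quad \|(-\Delta)^{s}v\|_2^2=\int_{\mathbb{R}^N}|\xi|^{4s}|\widehat v|^2\,d\xi,
\]
together with $\|v\|_{H^2(\mathbb{R}^N)}^2=\|v\|_2^2+2\|\nabla v\|_2^2+\|\Delta v\|_2^2$. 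The upper bound in \eqref{eqn:equivalent} is immediate: when $\alpha\ge 0$ the embedding $H^2(\mathbb{R}^N)\hookrightarrow H^{2s}(\mathbb{R}^N)$ gives $\|(-\Delta)^{s}v\|_2^2\le\|v\|_{H^2(\mathbb{R}^N)}^2$, while for $\alpha<0$ the extra term only decreases the left-hand side, so in both cases it is bounded by $\max\{1,1+\alpha\}\,\|v\|_{H^2(\mathbb{R}^N)}^2$. Thus the whole difficulty is the lower bound, which by the Poincar\'e inequality $\|v\|_2^2\le\lambda_1^{-1}\|\nabla v\|_2^2$ reduces to producing a constant $c>0$ with
\[
\|\nabla v\|_2^2+\|\Delta v\|_2^2+\alpha\|(-\Delta)^{s}v\|_2^2\ \ge\ c\left(\|\nabla v\|_2^2+\|\Delta v\|_2^2\right).
\]

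For $\alpha\ge 0$ this holds trivially with $c=1$, so the heart of the matter is to absorb the negative term $\alpha\|(-\Delta)^{s}v\|_2^2=-|\alpha|\,\|(-\Delta)^{s}v\|_2^2$ into the nonnegative quantities $\|\nabla v\|_2^2$ and $\|\Delta v\|_2^2$. The two alternatives in (H2) correspond to two ways of interpolating the fractional term. Writing $|\xi|^{4s}=(|\xi|^4)^{s}\cdot 1^{\,1-s}$ and applying H\"older in $\xi$ yields, for every $s\in(0,1)$,
\[
\|(-\Delta)^{s}v\|_2^2\le\|\Delta v\|_2^{2s}\,\|v\|_2^{2(1-s)}\le\lambda_1^{-(1-s)}\,\|\Delta v\|_2^{2s}\,\|\nabla v\|_2^{2(1-s)},
\]
where the second step uses Poincar\'e; this route will produce the first threshold in (H2). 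When $s\in(\tfrac12,1)$ one may instead interpolate $|\xi|^{4s}=(|\xi|^4)^{2s-1}(|\xi|^2)^{2-2s}$, giving
\[
\|(-\Delta)^{s}v\|_2^2\le\|\Delta v\|_2^{2(2s-1)}\,\|\nabla v\|_2^{2(2-2s)}
\]
with no recourse to Poincar\'e, and this route will produce the second threshold.

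In either case I would finish with Young's inequality in the form
\[
X^{\theta}Y^{1-\theta}\le\theta\mu\,X+(1-\theta)\mu^{-\theta/(1-\theta)}\,Y,\qquad X=\|\Delta v\|_2^2,\ \ Y=\|\nabla v\|_2^2,
\]
taking $\theta=s$ (with the extra factor $\lambda_1^{-(1-s)}$) in the first case and $\theta=2s-1$ in the second. This converts the interpolation bound into $|\alpha|\,\|(-\Delta)^{s}v\|_2^2\le A(\mu)\|\Delta v\|_2^2+B(\mu)\|\nabla v\|_2^2$, so the target inequality holds with $c=\min\{1-A(\mu),\,1-B(\mu)\}$. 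The main obstacle is to select the free parameter $\mu$ so that $1-A(\mu)$ and $1-B(\mu)$ are simultaneously strictly positive; a short computation shows this is possible exactly when $|\alpha|$ stays below the reciprocal of the minimum over $\mu>0$ of the larger of the two coefficients (with $|\alpha|$ factored out), and carrying out this optimization is precisely what yields the explicit constants $\lambda_1^{1-s}(1-s)^{1-s}s^{-s}$ and $(2-2s)^{2s-2}(2s-1)^{1-2s}$ recorded in (H2). Once such a positive $c$ is secured, combining once more with $\|v\|_2^2\le\lambda_1^{-1}\|\nabla v\|_2^2$ upgrades the estimate to $\|\nabla v\|_2^2+\|\Delta v\|_2^2+\alpha\|(-\Delta)^{s}v\|_2^2\ge c'\,\|v\|_{H^2(\mathbb{R}^N)}^2$ and completes the equivalence. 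I expect the genuinely delicate part to be tracking this sharp constant through the $\mu$-optimization, the Fourier-side identities and embedding bounds being routine.
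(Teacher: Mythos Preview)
Your proposal is correct and follows essentially the same route as the paper: Plancherel identities, H\"older interpolation of $\|(-\Delta)^s v\|_2^2$ between $\|v\|_2$ and $\|\Delta v\|_2$ (respectively $\|\nabla v\|_2$ and $\|\Delta v\|_2$ when $s>\tfrac12$), Young's inequality with a free parameter, and Poincar\'e to close. The only cosmetic difference is that you feed Poincar\'e into the interpolation estimate before applying Young, whereas the paper applies Young first and then uses Poincar\'e on the $\|\nabla v\|_2^2$ term; the resulting optimization over the free parameter is the same and yields the thresholds in (H2).
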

 
\begin{proof}
    To see this we use interpolation arguments \cite{CIRANT2019913,CANGIOTTI2024677}. 
    For all $\varepsilon > 0$, using 
    Plancherel's theorem in combination with H{\"o}lder's and Young's inequality we have 
    \begin{align} \label{eqn:interpolation-1}
        \|(-\Delta)^{s} v\|_2^2  
        &= \int_{\mathbb{R}^{N}} |\xi|^{4s} |\widehat{v}(\xi)|^2 d \xi 
        \leq \left(\int_{\mathbb{R}^{N}} |\widehat{v}(\xi)|^2 d \xi\right)^{1-s} 
        \left(\int_{\mathbb{R}^{N}} |\xi|^{4} |\widehat{v}(\xi)|^2 d \xi\right)^{s} \notag \\
        &= \|v\|_2^{2-2s} \|\Delta v\|_2^{2s} 
        \leq (1-s)\varepsilon^{-\frac{s}{1-s}} \|v\|_2^2 + s \varepsilon \|\Delta v\|_2^2.
    \end{align}
    Denote $\alpha^+= \max \{\alpha, 0 \}$ and $\alpha^- = \max \{-\alpha, 0 \}$. Then
    \begin{align} \label{eqn:equivalent-1} 
         \|\nabla v\|_2^2 +  \|\Delta v\|_2^2 + \alpha \|(-\Delta)^{s} v\|_2^2 
    & \leq  \alpha^+ (1-s)\varepsilon^{-\frac{s}{1-s}} \|v\|_2^2 + \|\nabla v\|_2^2 + 
    \alpha^+ s \varepsilon \|\Delta v\|_2^2 \notag \\
    & \lesssim  \|v\|_{H^2(\mathbb{R}^N)}^2.
    \end{align}
    When $\alpha > -\lambda_1^{1-s}(1-s)^{1-s}s^{-s}$, one can choose $\varepsilon > 0$ 
    such that 
    \[\lambda_1^{-\frac{1-s}{s}}(1-s)^{\frac{1-s}{s}}(\alpha^-)^{\frac{1-s}{s}} 
    < \varepsilon < \frac{1}{\alpha^- s},\]
    in which case $1-\alpha^- s \varepsilon > 0$ and 
    $\lambda_1 - \alpha^- (1-s) \varepsilon^{-\frac{1-s}{s}} > 0$ hold.  
    It follows from Poincar{\'e}'s inequality that 
    \begin{align} \label{eqn:equivalent-2} 
        \|\nabla v\|_2^2 + \|\Delta v\|_2^2 + \alpha \|(-\Delta)^{s} v\|_2^2 
    & \geq \left(\lambda_1 - \alpha^- (1-s) \varepsilon^{-\frac{1-s}{s}}\right) \|v\|_2^2 
    +  \left(1-\alpha^- s \varepsilon\right) \|\Delta v\|_2^2 \notag \\
    & \gtrsim \|v\|_{H^2(\mathbb{R}^N)}^2.
    \end{align}
    Combining \eqref{eqn:equivalent-1} with \eqref{eqn:equivalent-2}, we get 
    \eqref{eqn:equivalent}. 

    As for the case $\alpha > -(2-2s)^{2s-2}(2s-1)^{1-2s}$ along 
    with $s \in \left(\frac{1}{2},1\right)$, a similar process yields that 
    \[\|(-\Delta)^{s} v\|_2^2 \leq 
    (2-2s)\varepsilon^{-\frac{1-2s}{2-2s}} \|\nabla v\|_2^2 
    + (2s-1) \varepsilon \|\Delta v\|_2^2.\] 
    In this case we can choose $\varepsilon > 0$ such that 
    \[(2-2s)^{\frac{2-2s}{2s-1}}(\alpha^-)^{\frac{2-2s}{2s-1}} 
    < \varepsilon < \frac{1}{\alpha^- (2s-1)},\]
    then $1-\alpha^- (2s-1) \varepsilon > 0$ and 
    $1-\alpha^- (2-2s) \varepsilon^{\frac{1-2s}{2-2s}} > 0$ hold true. 
    Hence,  
    \begin{align} \label{eqn:equivalent-3} 
        \|\nabla v\|_2^2 + \|\Delta v\|_2^2 + \alpha \|(-\Delta)^{s} v\|_2^2 
    & \geq \left(1-\alpha^- (2-2s) \varepsilon^{\frac{1-2s}{2-2s}}\right) \|\nabla v\|_2^2 
    +  \left(1-\alpha^- (2s-1) \varepsilon\right) \|\Delta v\|_2^2 \notag \\
    & \gtrsim \|v\|_{H^2(\mathbb{R}^N)}^2,
    \end{align}
    then \eqref{eqn:equivalent-1} and \eqref{eqn:equivalent-3} conclude the proof.
\end{proof}

Setting 
\[\|v\|_{(\alpha)} \coloneqq \left(
    \|\nabla v\|_2^2 + \|\Delta v\|_2^2 + \alpha \|(-\Delta)^{s} v\|_2^2\right)^
    {\frac{1}{2}},\]
the above lemma implies that as long as (H2) is satisfied, 
$\|\cdot\|_{(\alpha)}$ and the 
usual $H^2(\mathbb{R}^N)$-norm are actually equivalent on the space 
$\mathcal{H}_0^2(\Omega)$. For $p$ satisfying (H1), 
we denote by $S_{p(\cdot)}$ a positive constant that 
the optimal embedding constant of $\mathcal{H}_0^2(\Omega) 
\hookrightarrow \mathcal{W}_0^{1,p}(\Omega)$, i.e. 
\[\frac{1}{S_{p(\cdot)}} = \inf_{v \in \mathcal{H}_0^2(\Omega) \backslash \{0\}} 
\frac{\|v\|_{(\alpha)}}{\|\nabla v\|_{p(\cdot)}}.\]

The Nehari manifold associated with $J(v;t)$ is given by 
\[\mathcal{N}(t) = \{v \in \mathcal{H}_0^2(\Omega) \backslash \{0\}: I(v;t) = 0\}.\]
The following lemma reveals that for any $t \geq 0$, $\mathcal{N}(t)$ is nonempty. 
\begin{lemma} \label{lem:nehari-nonempty} 
    Let (H1)-(H3) be fulfilled. For a given $v \in \mathcal{H}_0^2(\Omega)$ 
    with $\|v\|_{(\alpha)} \ne 0$ and each $t \geq 0$, there exists a constant
    $\mu^* = \mu^*(v, t) > 0$ such that $I(\mu^* v; t) = 0$.
\end{lemma}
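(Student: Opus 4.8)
The plan is to fix $v \in \mathcal{H}_0^2(\Omega)$ with $\|v\|_{(\alpha)} \neq 0$ and $t \geq 0$, and to reduce the statement to finding a positive root of the scalar function $\mu \mapsto I(\mu v; t)$, $\mu > 0$. Exploiting the $2$-homogeneity of the quadratic part together with the scaling $|\nabla(\mu v)|^{p(x)} = \mu^{p(x)}|\nabla v|^{p(x)}$, I would write
\[
I(\mu v; t) = \mu^2 \|v\|_{(\alpha)}^2 - k(t)\int_\Omega \mu^{p(x)}|\nabla v|^{p(x)}\,dx .
\]
Since only $\mu > 0$ is of interest, I divide by $\mu^2$ and set
\[
g(\mu) := \|v\|_{(\alpha)}^2 - k(t)\int_\Omega \mu^{p(x)-2}|\nabla v|^{p(x)}\,dx ,
\]
so that $I(\mu^* v; t) = 0$ for some $\mu^* > 0$ is equivalent to $g(\mu^*) = 0$. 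The whole argument then rests on showing that $g$ is continuous, strictly decreasing, positive near $0$, and eventually negative.

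Second, I would establish the required behaviour of $g$ using (H1) and (H3). Because $k(0) > 0$ and $k$ is nondecreasing, $k(t) > 0$ for all $t \geq 0$. The map $\mu \mapsto \phi(\mu) := \int_\Omega \mu^{p(x)-2}|\nabla v|^{p(x)}\,dx$ is continuous on $(0,\infty)$ by dominated convergence: on any compact $[a,b]\subset(0,\infty)$ the factor $\mu^{p(x)-2}$ is uniformly bounded (as $p$ is bounded and $\mu$ ranges over a compact subset of $(0,\infty)$), so the integrand is dominated by a constant multiple of $|\nabla v|^{p(x)} \in L^1(\Omega)$. Moreover $\phi$ is strictly increasing because $p(x)-2 \geq p^- - 2 > 0$. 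For the limits, I would use the uniform exponent bounds: for $0 < \mu \leq 1$ one has $\mu^{p(x)-2} \leq \mu^{p^- -2}$, whence $\phi(\mu) \leq \mu^{p^- -2}\int_\Omega |\nabla v|^{p(x)}\,dx \to 0$ as $\mu \to 0^+$; while for $\mu \geq 1$ one has $\mu^{p(x)-2} \geq \mu^{p^- -2}$, whence $\phi(\mu) \geq \mu^{p^- -2}\int_\Omega |\nabla v|^{p(x)}\,dx \to +\infty$ as $\mu \to +\infty$. Consequently $g(0^+) = \|v\|_{(\alpha)}^2 > 0$ and $g(+\infty) = -\infty$, so the intermediate value theorem produces a $\mu^* = \mu^*(v,t) > 0$ with $g(\mu^*) = 0$, that is $I(\mu^* v; t) = 0$; strict monotonicity of $g$ shows in addition that $\mu^*$ is unique.

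The one point needing genuine care, which I expect to be the main obstacle, is verifying that $\int_\Omega |\nabla v|^{p(x)}\,dx > 0$, since otherwise $\phi \equiv 0$, the limit at $+\infty$ computation collapses, and no positive root exists. I would argue by contradiction: if $\int_\Omega |\nabla v|^{p(x)}\,dx = 0$, then $\nabla v = 0$ a.e. on $\Omega$ (and hence on $\mathbb{R}^N$, as $v$ vanishes outside $\Omega$), so Poincaré's inequality on $\mathcal{H}_0^2(\Omega)$ forces $v = 0$, contradicting $\|v\|_{(\alpha)} \neq 0$ via the norm equivalence \eqref{eqn:equivalent}. Once this positivity and the strict positivity of $k(t)$ are secured, the continuity (via dominated convergence) and the two boundary-limit computations are routine, and the proof is complete.
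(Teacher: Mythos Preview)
Your proof is correct and follows the same intermediate-value-theorem strategy as the paper, but with a somewhat different implementation. The paper works directly with $h(\mu) := I(\mu v; t)$ and bounds the variable-exponent integral via the norm--modular inequalities
\[
\min\bigl\{\|\nabla v\|_{p(\cdot)}^{p^-}, \|\nabla v\|_{p(\cdot)}^{p^+}\bigr\} \leq \int_\Omega |\nabla v|^{p(x)}\,dx \leq \max\bigl\{\|\nabla v\|_{p(\cdot)}^{p^-}, \|\nabla v\|_{p(\cdot)}^{p^+}\bigr\},
\]
thereby producing an explicit bracket $\widehat{\mu}_1 \leq \mu^* \leq \widehat{\mu}_2$ in terms of $\|v\|_{(\alpha)}$, $\|\nabla v\|_{p(\cdot)}$, and $k(t)$. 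You instead divide by $\mu^2$ and exploit the pointwise monotonicity of $\mu \mapsto \mu^{p(x)-2}$ together with dominated convergence, which reaches the same conclusion more directly and, as a bonus, yields \emph{uniqueness} of $\mu^*$. The trade-off is that the paper's explicit constants $\widehat{\mu}_1$, $\widehat{\mu}_2$ are reused immediately in the proof of Lemma~\ref{lem:depth-bound} to derive the upper and lower bounds on the potential-well depth $d(t)$; your argument does not furnish those constants, so you would need to recover them separately if you proceeded to that lemma. Finally, your explicit verification that $\int_\Omega |\nabla v|^{p(x)}\,dx > 0$ (via $\nabla v = 0$ a.e.\ $\Rightarrow v = 0$) is a point the paper leaves implicit in the well-definedness of $\widehat{\mu}_1$, $\widehat{\mu}_2$.
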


\begin{proof}
    For any $\mu > 0$ and 
    fixed $t$, we set $h(\mu) \coloneqq I(\mu v; t)$. It is easy to see that $h(\mu)$ is 
    continuous with respect to $\mu$. Moreover, it holds that 
    \begin{align*}
        h(\mu) &\geq \mu^2 \|v\|_{(\alpha)}^2 - k(t) 
    \max \left\{\mu^{p^-} \|\nabla v\|_{p(\cdot)}^{p^-}, 
    \mu^{p^+} \|\nabla v\|_{p(\cdot)}^{p^+}\right\} \\
    &= \min \left\{ \mu^2 \|v\|_{(\alpha)}^2 - \mu^{p^-} k(t) \|\nabla v\|_{p(\cdot)}^{p^-}, 
    \mu^2 \|v\|_{(\alpha)}^2 - \mu^{p^+} k(t) \|\nabla v\|_{p(\cdot)}^{p^+} \right\}, 
    \end{align*}
    A simple calculation yields that there exists a positive constant 
    $\widehat{\mu}_1 = \widehat{\mu}_1(v,t)$ given by 
    \[\widehat{\mu}_1 \coloneqq \min \left\{\left(\frac{\|v\|_{(\alpha)}^2}
    {k(t) \|\nabla v\|_{p(\cdot)}^{p^-}}\right)^{\frac{1}{p^--2}}, 
    \left(\frac{\|v\|_{(\alpha)}^2}
    {k(t) \|\nabla v\|_{p(\cdot)}^{p^+}}\right)^{\frac{1}{p^+-2}}\right\}\]
    such that $h(\mu) \geq 0$ for $\mu \in (0, \widehat{\mu}_1]$. Similarly, it follows from 
    \[h(\mu) \leq \max \left\{ \mu^2 \|v\|_{(\alpha)}^2 - \mu^{p^-} k(t) \|\nabla v\|_{p(\cdot)}^{p^-}, 
    \mu^2 \|v\|_{(\alpha)}^2 - \mu^{p^+} k(t) \|\nabla v\|_{p(\cdot)}^{p^+} \right\}\]
    that there exists a positive constant 
    $\widehat{\mu}_2 = \widehat{\mu}_2(v,t) \geq \widehat{\mu}_1$ given by 
    \[\widehat{\mu}_2 \coloneqq \max \left\{\left(\frac{\|v\|_{(\alpha)}^2}
    {k(t) \|\nabla v\|_{p(\cdot)}^{p^-}}\right)^{\frac{1}{p^--2}}, 
    \left(\frac{\|v\|_{(\alpha)}^2}
    {k(t) \|\nabla v\|_{p(\cdot)}^{p^+}}\right)^{\frac{1}{p^+-2}}\right\}\]
    such that $h(\mu) \leq 0$ for $\mu \in [\widehat{\mu}_2,+\infty)$. Therefore, there exists a positive 
    constant $\mu^* = \mu^*(v,t)$ such that $\widehat{\mu}_1 \leq \mu^* \leq \widehat{\mu}_2$ and
    $I(\mu^* v; t) = 0$.
\end{proof} 

For each $t \geq 0$, $\mathcal{N}(t)$ separates the two unbounded sets
\[\begin{array}{c} \vspace{0.25em}
    \mathcal{N}_+(t) = \{v \in \mathcal{H}_0^2(\Omega) : I(v;t) > 0\} \cup \{0\}, \\
    \mathcal{N}_-(t) = \{v \in \mathcal{H}_0^2(\Omega) : I(v;t) < 0\}.
\end{array}\] 
We proceed to define the potential well for problem  
\eqref{eqn:main} by 
\[\begin{array}{c} \vspace{0.25em}
    W(t) = \{v \in \mathcal{H}_0^2(\Omega) : J(v;t) < d(t), I(v;t) > 0\} \cup \{0\}, \\
\end{array}\]
where 
\[d(t) = \inf_{v \in \mathcal{N}(t)} J(v ; t)\]
denotes the depth of the potential well $W(t)$. The nonemptiness of $\mathcal{N}(t)$ ensures that the potential well depth $d(t)$ is 
well-defined. We denote 
\[\underline{d} \coloneqq \inf_{t \geq 0} d(t), \quad 
\overline{d} \coloneqq \sup_{t \geq 0} d(t).\] 
The following lemma provides several important properties of $d(t)$.

\begin{lemma} \label{lem:depth-bound} 
    Let (H1)-(H3) be fulfilled. Then for any $t \geq 0$, the following statements hold true:
    \begin{enumerate}[(i)]
        \item It holds that
    \[d(t) \leq \frac{p^+-2}{2p^+} \max \left\{ 
        \left(k(t)S_{p(\cdot)}^{p^-}\right)^{\frac{2}{2-p^-}}, 
        \left(k(t)S_{p(\cdot)}^{p^+}\right)^{\frac{2}{2-p^+}}
    \right\},\] 
    and
    \[d(t) \geq \frac{p^--2}{2p^-} \min \left\{ 
        \left(k(t)S_{p(\cdot)}^{p^-}\right)^{\frac{2}{2-p^-}}, 
        \left(k(t)S_{p(\cdot)}^{p^+}\right)^{\frac{2}{2-p^+}}
    \right\}.\] 
        \item $0 \leq \underline{d} \leq d(t) \leq \overline{d} < + \infty$. Additionally, 
        if $k(t)$ is bounded, then $\underline{d} > 0$.
        \item For any $v \in \mathcal{N}_-(t)$, 
        \begin{align} \label{eqn:unstable-depth-upper}
            d(t) \leq \min \left\{\frac{p^+ - 2}{2 p^+} \|v\|_{(\alpha)}^2, 
            k(t) \int_{\Omega} \frac{p(x)-2}{2p(x)} |\nabla v|^{p(x)} dx \right\}.
        \end{align}
    \end{enumerate}
\end{lemma}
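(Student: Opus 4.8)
The plan is to base everything on the single algebraic identity obtained by substituting the Nehari constraint into $J$. First I would record that for any $v\in\mathcal{N}(t)$ the relation $I(v;t)=0$ reads $\|v\|_{(\alpha)}^2=k(t)\int_\Omega|\nabla v|^{p(x)}\,dx$, so that $J(v;t)=\tfrac12\|v\|_{(\alpha)}^2-k(t)\int_\Omega\frac{1}{p(x)}|\nabla v|^{p(x)}\,dx$ can be rewritten, after eliminating the integral, as $J(v;t)=k(t)\int_\Omega\frac{p(x)-2}{2p(x)}|\nabla v|^{p(x)}\,dx$. Using $\frac{1}{p^+}\le\frac{1}{p(x)}\le\frac{1}{p^-}$ together with the Nehari constraint then sandwiches the energy, $\frac{p^--2}{2p^-}\|v\|_{(\alpha)}^2\le J(v;t)\le\frac{p^+-2}{2p^+}\|v\|_{(\alpha)}^2$, valid for every $v\in\mathcal{N}(t)$. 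This reduces part (i) to estimating $\|v\|_{(\alpha)}^2$ on the Nehari manifold from both sides.

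For the lower bound in (i), I would take an arbitrary $v\in\mathcal{N}(t)$, feed the embedding $\|\nabla v\|_{p(\cdot)}\le S_{p(\cdot)}\|v\|_{(\alpha)}$ into the upper norm--modular inequalities, and split on whether $\|\nabla v\|_{p(\cdot)}$ exceeds $1$; each case yields a bound of the form $(k(t)S_{p(\cdot)}^{p^\pm})^{2/(2-p^\pm)}$, so $\|v\|_{(\alpha)}^2$ is bounded below by the minimum of the two, and multiplying by $\frac{p^--2}{2p^-}$ gives the claimed lower bound for $d(t)=\inf_{\mathcal{N}(t)}J$. For the upper bound I expect the real work: here I would pick a (near-)minimizer $w$ of the embedding quotient with $\|w\|_{(\alpha)}=1$ and $\|\nabla w\|_{p(\cdot)}=S_{p(\cdot)}$, scale it onto $\mathcal{N}(t)$ via Lemma \ref{lem:nehari-nonempty} to get $\mu^\ast w\in\mathcal{N}(t)$, and use the lower norm--modular inequalities (again splitting on whether $\mu^\ast S_{p(\cdot)}$ exceeds $1$) to obtain the reverse estimate $(\mu^\ast)^2\le\max\{(k(t)S_{p(\cdot)}^{p^-})^{2/(2-p^-)},(k(t)S_{p(\cdot)}^{p^+})^{2/(2-p^+)}\}$; then $d(t)\le J(\mu^\ast w;t)\le\frac{p^+-2}{2p^+}(\mu^\ast)^2$ closes it. The delicate points are keeping the inequality directions straight when raising to the negative power $2/(2-p^\pm)$, and the fact that the upper bound genuinely needs the extremal direction of the embedding (a generic $v\in\mathcal{N}(t)$ gives no upper control on $\|v\|_{(\alpha)}$), so I would either invoke compactness of $\mathcal{H}_0^2(\Omega)\hookrightarrow\mathcal{W}_0^{1,p}(\Omega)$ to attain $S_{p(\cdot)}$ or run the argument with near-minimizers and pass to the limit.

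For (ii), positivity follows at once: (H3) gives $k(t)\ge k(0)>0$, so the lower bound in (i) is strictly positive for each fixed $t$, whence $d(t)>0$ and $\underline d=\inf_{t\ge0}d(t)\ge0$, while $\underline d\le d(t)\le\overline d$ is just the definition of the inf and sup. The key monotonicity observation is that both $(k S_{p(\cdot)}^{p^\pm})^{2/(2-p^\pm)}$ are decreasing in $k$ (negative exponent), so the upper bound from (i) is a decreasing function of $k(t)$; since $k$ is nondecreasing with $k\ge k(0)$, this bound is maximal at $k(0)$, giving $\overline d\le\frac{p^+-2}{2p^+}\max\{(k(0)S_{p(\cdot)}^{p^-})^{2/(2-p^-)},(k(0)S_{p(\cdot)}^{p^+})^{2/(2-p^+)}\}<\infty$. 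If in addition $k(t)\le K$ for all $t$, the lower bound from (i), being decreasing in $k$, is minimal at $k=K$ and stays strictly positive, so $\underline d\ge\frac{p^--2}{2p^-}\min\{(KS_{p(\cdot)}^{p^-})^{2/(2-p^-)},(KS_{p(\cdot)}^{p^+})^{2/(2-p^+)}\}>0$.

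For (iii), given $v\in\mathcal{N}_-(t)$, I would note that the function $h(\mu)=I(\mu v;t)$ satisfies $h(\mu)>0$ for small $\mu$ (exactly as in the proof of Lemma \ref{lem:nehari-nonempty}) and $h(1)=I(v;t)<0$, so by continuity there is $\mu^\ast\in(0,1)$ with $\mu^\ast v\in\mathcal{N}(t)$, and hence $d(t)\le J(\mu^\ast v;t)$. The first bound then comes from the sandwich of the first paragraph, $J(\mu^\ast v;t)\le\frac{p^+-2}{2p^+}(\mu^\ast)^2\|v\|_{(\alpha)}^2\le\frac{p^+-2}{2p^+}\|v\|_{(\alpha)}^2$ using $\mu^\ast<1$; the second comes from the rewritten energy $J(\mu^\ast v;t)=k(t)\int_\Omega\frac{p(x)-2}{2p(x)}(\mu^\ast)^{p(x)}|\nabla v|^{p(x)}\,dx\le k(t)\int_\Omega\frac{p(x)-2}{2p(x)}|\nabla v|^{p(x)}\,dx$, where $(\mu^\ast)^{p(x)}<1$ and $\frac{p(x)-2}{2p(x)}>0$ by (H1). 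Taking the minimum of the two yields the stated estimate.
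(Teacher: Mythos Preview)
Your proposal is correct and follows essentially the same approach as the paper. The only noteworthy variation is in the upper bound of (i): the paper bounds $\mu^\ast\le\widehat{\mu}_2$ (the explicit upper bound from Lemma~\ref{lem:nehari-nonempty}) for \emph{arbitrary} $v\in\mathcal{H}_0^2(\Omega)\setminus\{0\}$ and then takes the infimum over $v$ to reach $S_{p(\cdot)}$, whereas you first select the (near-)extremal $w$ of the embedding and then bound $\mu^\ast$ directly from the Nehari constraint via the lower modular--norm inequality; both routes arrive at the same bound, and your remark that the upper estimate genuinely requires the extremal direction is a point the paper leaves implicit.
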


\begin{proof}
    We first prove (i). Fix $v \in \mathcal{H}_0^2(\Omega) \backslash \{0\}$. 
    For each $t \geq 0$, Lemma \ref{lem:nehari-nonempty} implies that
    there exists a positive constant $\mu^* = \mu^*(v,t)$ such that 
    $\mu^* v \in \mathcal{N}(t)$. 
    Therefore, we have 
    \begin{align} \label{eqn:depth-upper-bound-critical} 
        d(t) \leq J( v;t) &\leq \frac{1}{2} \|\mu^*v\|_{(\alpha)}^2 
        -  \frac{k(t)}{p^+} \int_{\Omega} |\nabla (\mu^*v)|^{p(x)} dx \notag \\
        & = \frac{1}{p^+} I(\mu^* v;t) + 
        \frac{p^+-2}{2 p^+} (\mu^*)^2 \| v\|_{(\alpha)}^2 \notag \\ 
        & = \frac{p^+-2}{2 p^+} (\mu^*)^2 \| v\|_{(\alpha)}^2.
    \end{align}
    It follows from the proof of Lemma \ref{lem:depth-bound} that 
    \begin{align*}
        d(t) & \leq \frac{p^+-2}{2 p^+} \|v\|_{(\alpha)}^2  \widehat{\mu}_2^2 \\
        & = \frac{p^+-2}{2 p^+} \max \left\{\left(\frac{\|v\|_{(\alpha)}^{p^-}}
        {k(t) \|\nabla v\|_{p(\cdot)}^{p^-}}\right)^{\frac{2}{p^--2}}, 
        \left(\frac{\|v\|_{(\alpha)}^{p^+}}
        {k(t) \|\nabla v\|_{p(\cdot)}^{p^+}}\right)^{\frac{2}{p^+-2}}\right\}.
    \end{align*}
    By the arbitrariness of $v \in \mathcal{H}_0^2(\Omega) \backslash \{0\}$ and 
    the definition of $S_{p(\cdot)}$, it follows that for any $t \geq 0$, 
    \[d(t) \leq \frac{p^+-2}{2p^+} \max \left\{ 
        \left(k(t)S_{p(\cdot)}^{p^-}\right)^{\frac{2}{2-p^-}}, 
        \left(k(t)S_{p(\cdot)}^{p^+}\right)^{\frac{2}{2-p^+}}
    \right\}.\]

    Now we choose $v \in \mathcal{N}(t)$. Then 
    \begin{align} \label{eqn:norm-lower-bound} 
        \|v\|_{(\alpha)}^2 &= k(t) \int_{\Omega} |\nabla v|^{p(x)} dx
        \leq k(t) \max \left\{\|\nabla  v\|_{p(\cdot)}^{p^-}, 
        \|\nabla  v\|_{p(\cdot)}^{p^+}\right\} \notag \\
        & \leq k(t) \max \left\{S_{p(\cdot)}^{p^-} \|v\|_{(\alpha)}^{p^-}, 
        S_{p(\cdot)}^{p^+} \|v\|_{(\alpha)}^{p^+} \right\},
    \end{align}
    from which it follows that 
    \begin{align} \label{eqn:norm-lower-bound-2} 
    \|v\|_{(\alpha)}^2 \geq \min \left\{ 
        \left(k(t)S_{p(\cdot)}^{p^-}\right)^{\frac{2}{2-p^-}}, 
        \left(k(t)S_{p(\cdot)}^{p^+}\right)^{\frac{2}{2-p^+}}
    \right\}.
    \end{align}
    Since $v \in \mathcal{N}(t)$, we get 
    \begin{align*}
        J(v;t) &= \frac{1}{2} \|v\|_{(\alpha)}^2 
        -  k(t) \int_{\Omega} \frac{1}{p(x)} |\nabla v|^{p(x)} dx \\
        & \geq \frac{1}{2} \|v\|_{(\alpha)}^2 - 
        \frac{k(t)}{p^-} \int_{\Omega} |\nabla v|^{p(x)} dx 
        = \frac{p^- - 2}{2 p^-} \|v\|_{(\alpha)}^2 \\
        & \geq \frac{p^--2}{2p^-} \min \left\{ 
            \left(k(t)S_{p(\cdot)}^{p^-}\right)^{\frac{2}{2-p^-}}, 
            \left(k(t)S_{p(\cdot)}^{p^+}\right)^{\frac{2}{2-p^+}}
        \right\}.
    \end{align*}
    The definition of $d(t)$ concludes the proof of (i).

    As for (ii), since $k(t)$ is nondecreasing, we obtain that
    \[0<d(t) \leq \frac{p^+-2}{2p^+} \max \left\{ 
        \left(k(0)S_{p(\cdot)}^{p^-}\right)^{\frac{2}{2-p^-}}, 
        \left(k(0)S_{p(\cdot)}^{p^+}\right)^{\frac{2}{2-p^+}}
    \right\},\]
    hence, $0 \leq \underline{d} \leq \overline{d} < + \infty$. If $k(t)$ is bounded, then 
    \[\underline{d} \geq \frac{p^--2}{2p^-} \min \left\{ 
        \left(k(+\infty)S_{p(\cdot)}^{p^-}\right)^{\frac{2}{2-p^-}}, 
        \left(k(+\infty)S_{p(\cdot)}^{p^+}\right)^{\frac{2}{2-p^+}}
    \right\} > 0.\]

    Finally, let us prove (iii). Recall the function $h(\mu)$ defined in the proof of 
    Lemma \ref{lem:nehari-nonempty}. For any given $v \in \mathcal{N}_-(t)$, 
    we know that $h(1) < 0$. A direct computation shows that 
    \[h(\mu) \geq \mu^2 \|v\|_{(\alpha)}^2 - \mu^{p^-} k(t) \int_{\Omega} |\nabla v|^{p(x)} dx\]
    holds for $\mu \in (0,1)$, which implies that there exists a positive constant 
    $\mu_0 = \mu_0(v,t) < 1$ such that $h(\mu_0) > 0$. Therefore,
    there exists is a positive constant $\mu^* = \mu^*(v,t) \in (0,1)$ 
    such that $\mu^* v \in \mathcal{N}(t)$. It follows from  
    \eqref{eqn:depth-upper-bound-critical} that 
    \begin{equation*}
        d(t) \leq J( v;t) \leq 
        \frac{p^+-2}{2 p^+} (\mu^*)^2 \| v\|_{(\alpha)}^2
        \leq \frac{p^+-2}{2 p^+} \|v\|_{(\alpha)}^2.
    \end{equation*}
    On the other hand,
    \begin{align*}
        d(t) \leq J(\mu^* v;t) &= \frac{1}{2} I(\mu^* v;t) 
        + k(t) \int_{\Omega} \frac{p(x)-2}{2p(x)} 
        (\mu^*)^{p(x)} |\nabla v|^{p(x)} dx \\ 
        & \leq k(t) \int_{\Omega} \frac{p(x)-2}{2p(x)} |\nabla v|^{p(x)} dx,
    \end{align*}
    which concludes the proof of (iii).
\end{proof}    

For any $\varsigma > d(t)$, we define 
the sublevels of $J(\cdot ; t)$:
\[J^{\varsigma}(t) = \{v \in \mathcal{H}_0^2(\Omega) : J(v ; t) \leq \varsigma\}.\]
From the definition of $J(\cdot ; t)$, $\mathcal{N}(t)$, $d(t)$ and $J^{\varsigma}(t)$,
we know that for any $\varsigma > d(t)$, 
\[\mathcal{N}_{\varsigma}(t) \coloneqq \mathcal{N}(t) \cap J^{\varsigma}(t) = 
\{v \in \mathcal{N}(t): J(v ; t) \leq \varsigma\} \neq \varnothing.\]
We also define 
\begin{equation*}
    \lambdabar_{\varsigma}(t) \coloneqq \inf_{v \in \mathcal{N}_{\varsigma}(t)} \|v\|_2^2, \quad
    \underline{\lambdabar}_{\varsigma} \coloneqq \inf_{t \geq 0} \lambdabar_{\varsigma}(t), \quad 
    \lambdabar_{\infty}(t) \coloneqq \inf_{v \in \mathcal{N}(t)} \|v\|_2^2, \quad
    \underline{\lambdabar}_{\infty} \coloneqq \inf_{t \geq 0} \lambdabar_{\infty}(t).
\end{equation*}
It is clear that $\underline{\lambdabar}_{\varsigma}<+\infty$ if $\varsigma > \overline{d}$. 
Moreover, it holds that $0 \leq \lambdabar_{\infty}(t) \leq \lambdabar_{\varsigma}(t)$ for 
any $t \geq 0$, and 
$0 \leq \underline{\lambdabar}_{\infty} \leq \underline{\lambdabar}_{\varsigma}$.

Next, we discuss some properties of $\mathcal{N}_+(t)$, $\mathcal{N}_-(t)$, 
$\mathcal{N}_{\varsigma}(t)$ and $\lambdabar_{\varsigma}(t)$.

\begin{lemma} \label{lem:0-interior} 
    Let (H1)-(H3) be fulfilled. Then for any $t \geq 0$, $0$ is an interior of 
    $\mathcal{N}_+(t)$. Moreover, for any given $T' \in (0,+\infty)$, there exists 
    $\varepsilon>0$ depending only on $T'$ such that for any $t \in [0,T']$ and any 
    $v \in \mathcal{H}_0^2(\Omega)$ satisfying $\|v\|_{(\alpha)} < \varepsilon$, 
    it holds that $v \in \mathcal{N}_+(t)$.
\end{lemma}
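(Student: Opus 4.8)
The plan is to work directly from the defining inequality $I(v;t)>0$ and show it holds on a small $\|\cdot\|_{(\alpha)}$-ball. Writing $I(v;t)=\|v\|_{(\alpha)}^2 - k(t)\int_\Omega |\nabla v|^{p(x)}\,dx$, the whole task reduces to bounding the modular term $\int_\Omega |\nabla v|^{p(x)}\,dx$ by a superquadratic power of $\|v\|_{(\alpha)}$, which is exactly where the hypothesis $p^->2$ from (H1) will be used.

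First I would invoke the embedding $\mathcal{H}_0^2(\Omega)\hookrightarrow \mathcal{W}_0^{1,p}(\Omega)$ to get $\|\nabla v\|_{p(\cdot)}\le S_{p(\cdot)}\|v\|_{(\alpha)}$. Restricting attention to $\|v\|_{(\alpha)}<1/S_{p(\cdot)}$ forces $\|\nabla v\|_{p(\cdot)}<1$, so the norm--modular relation (the case $\|\cdot\|_{p(\cdot)}<1$) gives $\int_\Omega|\nabla v|^{p(x)}\,dx\le \|\nabla v\|_{p(\cdot)}^{p^-}\le S_{p(\cdot)}^{p^-}\|v\|_{(\alpha)}^{p^-}$. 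Substituting this into $I$ yields
\[
I(v;t)\ \ge\ \|v\|_{(\alpha)}^2\Bigl(1-k(t)S_{p(\cdot)}^{p^-}\|v\|_{(\alpha)}^{p^--2}\Bigr).
\]
Since $p^--2>0$, the bracket is strictly positive once $\|v\|_{(\alpha)}$ is small, proving $v\in\mathcal{N}_+(t)$ for every such $v\ne 0$; hence $0$ is an interior point of $\mathcal{N}_+(t)$ for each fixed $t\ge 0$, with the admissible radius governed by $\bigl(k(t)S_{p(\cdot)}^{p^-}\bigr)^{-1/(p^--2)}$.

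For the uniform statement over $t\in[0,T']$, the only $t$-dependence in the above estimate sits in the factor $k(t)$. By (H3), $k$ is continuous and nondecreasing, so $k(t)\le k(T')$ on $[0,T']$; replacing $k(t)$ by $k(T')$ makes the bracket $t$-independent. I would then set
\[
\varepsilon\ \coloneqq\ \min\Bigl\{\tfrac{1}{S_{p(\cdot)}},\ \bigl(2k(T')S_{p(\cdot)}^{p^-}\bigr)^{-\frac{1}{p^--2}}\Bigr\},
\]
which depends only on $T'$ (through $k(T')$) and the fixed structural constants $S_{p(\cdot)}$, $p^-$; for any $t\in[0,T']$ and any $v$ with $\|v\|_{(\alpha)}<\varepsilon$ the bracket is bounded below by $1/2>0$, so $I(v;t)>0$ and $v\in\mathcal{N}_+(t)$.

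I do not anticipate a serious obstacle: the argument is essentially a superlinear-versus-quadratic comparison near the origin. The only points requiring care are (a) passing to the regime $\|\nabla v\|_{p(\cdot)}<1$ before applying the norm--modular inequality, since the direction of the inequality between the modular and the $p^-$/$p^+$ powers flips across the value $1$, so one must use $p^-$ (not $p^+$) below $1$; and (b) extracting a single $\varepsilon$ valid for all $t\in[0,T']$, for which the monotonicity of $k$ in (H3) is exactly what is needed to replace the moving coefficient $k(t)$ by the uniform bound $k(T')$.
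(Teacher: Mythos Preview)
Your proposal is correct and follows essentially the same approach as the paper: both arguments bound the modular $\int_\Omega|\nabla v|^{p(x)}\,dx$ by $C\|v\|_{(\alpha)}^{p^-}$ for small $v$, use $p^->2$ to make the Nehari functional positive near the origin, and then invoke the monotonicity of $k$ from (H3) to obtain a uniform radius on $[0,T']$. The only cosmetic difference is that the paper normalises $v$ and scales by $\mu=\|v\|_{(\alpha)}$ (using $\mu^{p(x)}\le\mu^{p^-}$ for $\mu<1$), whereas you go straight through the norm--modular inequality after ensuring $\|\nabla v\|_{p(\cdot)}<1$; the resulting bounds and the final $\varepsilon$ are equivalent up to harmless constants.
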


\begin{proof}
    Choose $v \in \mathcal{H}_0^2(\Omega)$ with $0<\|v\|_{(\alpha)}<1$. Set 
    $v_1 = \frac{v}{\|v\|_{(\alpha)}}$ and 
    $\widetilde{S}_{p(\cdot)} = \max \left\{S_{p(\cdot)}^{p^-}, S_{p(\cdot)}^{p^+} \right\}$.
    For $\mu \in (0,1)$ and $t \geq 0$, it follows that 
    \[I(\mu v_1; t)  \geq \mu^2 \|v_1\|_{(\alpha)}^2 - \mu^{p^-} k(t) 
        \int_{\Omega} |\nabla v_1|^{p(x)} dx 
        \geq \mu^2 \left(1 - \mu^{p^--2} k(t) \widetilde{S}_{p(\cdot)}\right).\]
    Taking $\mu = \|v\|_{(\alpha)}$, there exists 
    $\varepsilon(t) = \min \left\{1,\left(k(t)\widetilde{S}_{p(\cdot)}\right)^{\frac{1}{2-p^-}}\right\}$ 
    such that for $v \in \mathcal{H}_0^2(\Omega)$ with $0<\|v\|_{(\alpha)}<\varepsilon(t)$,
    $I(v;t)>0$, which implies that for any $t \geq 0$, $0$ is an interior of 
    $\mathcal{N}_+(t)$. To show the second statement one only needs to take 
    $\varepsilon=\varepsilon(T')$.
\end{proof}  

\begin{lemma} \label{lem:away-from-0} 
    Let (H1)-(H3) be fulfilled. Then for each $t \geq 0$, the following statements are true:
    \begin{enumerate}[(i)]
        \item $\operatorname{dist}(0,\mathcal{N}(t)) > 0$, 
        $\operatorname{dist}(0,\mathcal{N}_-(t)) > 0$.
        \item For $\varsigma > d(t)$, the sets $\mathcal{N}_+(t) \cap J^{\varsigma}(t)$ and 
        $\mathcal{N}_{\varsigma}(t)$ are both bounded in $\mathcal{H}_0^2(\Omega)$.
    \end{enumerate}   
\end{lemma}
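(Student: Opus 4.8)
The plan is to reduce both statements to a small number of elementary estimates relating $J(v;t)$, $I(v;t)$ and $\|v\|_{(\alpha)}$, and then to pass freely between $\|\cdot\|_{(\alpha)}$ and the $H^2(\mathbb{R}^N)$-norm via the equivalence \eqref{eqn:equivalent}, so that positivity of a distance and boundedness of a set may be measured in whichever norm is convenient.

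For (i), the case of $\mathcal{N}(t)$ is essentially already done inside the proof of Lemma \ref{lem:depth-bound}: the chain \eqref{eqn:norm-lower-bound}--\eqref{eqn:norm-lower-bound-2} shows that every $v \in \mathcal{N}(t)$ obeys $\|v\|_{(\alpha)}^2 \geq \min\{(k(t)S_{p(\cdot)}^{p^-})^{2/(2-p^-)}, (k(t)S_{p(\cdot)}^{p^+})^{2/(2-p^+)}\}$, and by (H3) we have $k(t) \geq k(0) > 0$, so this lower bound is a strictly positive constant. For $\mathcal{N}_-(t)$ I would rerun the very same chain, but now starting from the \emph{defining} strict inequality $\|v\|_{(\alpha)}^2 < k(t)\int_\Omega |\nabla v|^{p(x)}\,dx$. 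Bounding the modular from above by $\max\{\|\nabla v\|_{p(\cdot)}^{p^-}, \|\nabla v\|_{p(\cdot)}^{p^+}\}$ (modular--norm relations) and then using $\|\nabla v\|_{p(\cdot)} \leq S_{p(\cdot)}\|v\|_{(\alpha)}$ reproduces exactly the inequality \eqref{eqn:norm-lower-bound}, hence the identical positive lower bound on $\|v\|_{(\alpha)}$. Taking the infimum over each set and applying \eqref{eqn:equivalent} yields $\operatorname{dist}(0,\mathcal{N}(t)) > 0$ and $\operatorname{dist}(0,\mathcal{N}_-(t)) > 0$.

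For (ii), the single workhorse I would isolate is the coercivity estimate
\[
J(v;t) \geq \frac{p^- - 2}{2p^-}\|v\|_{(\alpha)}^2 \qquad \text{whenever } I(v;t) \geq 0.
\]
This follows by writing $J(v;t) = \tfrac{1}{2}\|v\|_{(\alpha)}^2 - k(t)\int_\Omega \tfrac{1}{p(x)}|\nabla v|^{p(x)}\,dx$, replacing $\tfrac{1}{p(x)}$ by the larger constant $\tfrac{1}{p^-}$, and then inserting $I(v;t) \geq 0$ in the form $k(t)\int_\Omega |\nabla v|^{p(x)}\,dx \leq \|v\|_{(\alpha)}^2$. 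Since membership in either $\mathcal{N}(t)$ (where $I=0$) or $\mathcal{N}_+(t)$ (where $I \geq 0$) falls under the hypothesis $I(v;t)\geq 0$, every $v$ in $\mathcal{N}_\varsigma(t)$ or in $\mathcal{N}_+(t) \cap J^{\varsigma}(t)$ satisfies $\tfrac{p^- - 2}{2p^-}\|v\|_{(\alpha)}^2 \leq J(v;t) \leq \varsigma$, so $\|v\|_{(\alpha)}^2 \leq \tfrac{2p^-}{p^- - 2}\,\varsigma$; the equivalence \eqref{eqn:equivalent} then gives boundedness in $\mathcal{H}_0^2(\Omega)$, and one notices this bound is in fact uniform in $t$.

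I do not expect a serious obstacle, since every ingredient has already been assembled for Lemma \ref{lem:depth-bound}. The only point needing care is the ``moving'' character of the manifold: the lower bound in (i) carries the factor $k(t)$, so one must invoke the monotonicity of $k$ together with $k(0)>0$ to keep it bounded away from zero for each fixed $t$ (and, should a $t$-uniform separation be desired, the boundedness of $k$). In (ii) the time dependence is benign, because the coercivity constant $\tfrac{p^- - 2}{2p^-}$ is independent of $t$ and $k(t)$ enters only through the favorably signed term that is discarded.
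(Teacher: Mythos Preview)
Your proposal is correct and follows essentially the same approach as the paper: for (i) you reuse the chain \eqref{eqn:norm-lower-bound}--\eqref{eqn:norm-lower-bound-2} on $\mathcal{N}(t)$ and rerun it from the strict inequality $I(v;t)<0$ on $\mathcal{N}_-(t)$, and for (ii) you isolate the coercivity estimate $J(v;t) \geq \tfrac{p^--2}{2p^-}\|v\|_{(\alpha)}^2$ under $I(v;t)\geq 0$, exactly as the paper does. The only cosmetic difference is that the paper writes (ii) as a single chain $\varsigma \geq J(v;t) \geq \tfrac{p^--2}{2p^-}\|v\|_{(\alpha)}^2 + \tfrac{1}{p^-}I(v;t) \geq \tfrac{p^--2}{2p^-}\|v\|_{(\alpha)}^2$ rather than stating the coercivity lemma separately.
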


\begin{proof}
    Firstly, we prove (i). For $v \in \mathcal{N}(t)$, 
    from \eqref{eqn:norm-lower-bound-2}, we have
    \[\operatorname{dist}(0,\mathcal{N}(t)) 
    \sim \inf_{v \in \mathcal{N}(t)} \|v\|_{(\alpha)} 
    \geq \min \left\{ 
        \left(k(t)S_{p(\cdot)}^{p^-}\right)^{\frac{1}{2-p^-}}, 
        \left(k(t)S_{p(\cdot)}^{p^+}\right)^{\frac{1}{2-p^+}}
    \right\} > 0.\] 
    For $v \in \mathcal{N}_-(t)$, since
    \[\|v\|_{(\alpha)}^2 - k(t) \int_{\Omega} |\nabla v|^{p(x)} dx = I(v;t)<0,\]
    a similar process as in \eqref{eqn:norm-lower-bound} leads 
    to \eqref{eqn:norm-lower-bound-2}, and thus 
    \[\operatorname{dist}(0,\mathcal{N}_-(t)) 
    \sim \inf_{v \in \mathcal{N}_-(t)} \|v\|_{(\alpha)} 
    \geq \min \left\{ 
        \left(k(t)S_{p(\cdot)}^{p^-}\right)^{\frac{1}{2-p^-}}, 
        \left(k(t)S_{p(\cdot)}^{p^+}\right)^{\frac{1}{2-p^+}}
    \right\} > 0.\]

    To see (ii), we choose $v \in J^{\varsigma}(t)$ so that $I(v;t) \geq 0$. Then 
    we have 
    \begin{align*}
        \varsigma \geq J(v;t) & \geq \frac{1}{2} \|v\|_{(\alpha)}^2 - 
        \frac{k(t)}{p^-} \int_{\Omega} |\nabla v|^{p(x)} dx \\
        &= \frac{p^- - 2}{2 p^-} \|v\|_{(\alpha)}^2 + \frac{1}{p^-} I(v;t) \\
        &\geq \frac{p^- - 2}{2 p^-} \|v\|_{(\alpha)}^2.
    \end{align*}
    Hence, 
    \[\|v\|_{(\alpha)}^2 \leq \frac{2 p^- \varsigma}{p^- - 2},\]
    which concludes the proof of (ii).
\end{proof}

\begin{lemma} \label{lem:2-norm-inf-lower} 
    Let (H1)-(H3) be fulfilled. Then:
    \begin{enumerate}[(i)]
        \item For each $t \geq 0$ and any $\varsigma > d(t)$, 
        $\lambdabar_{\varsigma}(t) > 0$. Additionally, if $k(t)$ is bounded 
        and $\varsigma > \overline{d}$, then $\underline{\lambdabar}_{\varsigma}>0$.
        \item If $p^+ \leq \frac{2(N+4)}{N+2}$, then $\lambdabar_{\infty}(t) > 0$ for any
        $t \geq 0$. Furthermore, if $k(t)$ is bounded, 
        then $\underline{\lambdabar}_{\infty}>0$.
    \end{enumerate}
\end{lemma}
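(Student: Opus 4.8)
The plan is to reduce both parts to a single mechanism. On the ``moving'' Nehari set the constraint $I(v;t)=0$ forces $\|v\|_{(\alpha)}$ to stay bounded away from $0$ by \eqref{eqn:norm-lower-bound-2}, and a Gagliardo--Nirenberg interpolation then converts this into a positive lower bound for $\|v\|_2$. Concretely, for $q\in(2,2^*)$ I set $a(q)=\tfrac{1}{2}+\tfrac{N}{2}\big(\tfrac{1}{2}-\tfrac{1}{q}\big)\in(\tfrac{1}{2},1)$, so that the standard interpolation inequality $\|\nabla v\|_{q}\le C\|v\|_{H^2(\mathbb{R}^N)}^{a(q)}\|v\|_2^{1-a(q)}$ holds for the zero extension of any $v\in\mathcal{H}_0^2(\Omega)$; by the norm equivalence \eqref{eqn:equivalent} I may replace $\|v\|_{H^2(\mathbb{R}^N)}$ by $\|v\|_{(\alpha)}$ throughout. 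I will also use the elementary pointwise bound $|\nabla v|^{p(x)}\le|\nabla v|^{p^-}+|\nabla v|^{p^+}$, which yields $\int_\Omega|\nabla v|^{p(x)}\,dx\le\|\nabla v\|_{p^-}^{p^-}+\|\nabla v\|_{p^+}^{p^+}$ and lets me work with the two fixed exponents $p^-,p^+$ rather than the variable one.

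For (i), I fix $t\ge0$, $\varsigma>d(t)$ and take $v\in\mathcal{N}_\varsigma(t)$. Since $I(v;t)=0$, inequality \eqref{eqn:norm-lower-bound-2} gives $\|v\|_{(\alpha)}^2\ge c_1(t)>0$, while Lemma \ref{lem:away-from-0}(ii) (applicable because $v\in\mathcal{N}(t)\cap J^\varsigma(t)$) gives the matching upper bound $\|v\|_{(\alpha)}^2\le\frac{2p^-\varsigma}{p^--2}$. Feeding the Nehari identity $\|v\|_{(\alpha)}^2=k(t)\int_\Omega|\nabla v|^{p(x)}\,dx$ into the splitting above and applying the interpolation inequality, with $\|v\|_{(\alpha)}$ already controlled from above, produces $c_1(t)\le\|v\|_{(\alpha)}^2\le k(t)\,C'\big(\|v\|_2^{(1-a(p^-))p^-}+\|v\|_2^{(1-a(p^+))p^+}\big)$. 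Since both exponents $(1-a(p^\pm))p^\pm$ are strictly positive, this bounds $\|v\|_2$ below, so $\lambdabar_\varsigma(t)>0$. When $k$ is bounded and $\varsigma>\overline{d}$, the quantity $c_1(t)=\min\{(k(t)S_{p(\cdot)}^{p^-})^{\frac{2}{2-p^-}},(k(t)S_{p(\cdot)}^{p^+})^{\frac{2}{2-p^+}}\}$ is bounded below uniformly in $t$ (the exponents $\tfrac{2}{2-p^\pm}$ are negative, so $c_1$ is decreasing in $k$ and $k(t)\le k(+\infty)<\infty$), and both $k(t)$ and the upper bound are uniform in $t$; tracking these uniform constants yields $\underline{\lambdabar}_\varsigma>0$.

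For (ii) the difficulty is that on all of $\mathcal{N}(t)$ there is no upper bound on $\|v\|_{(\alpha)}$, so the argument of (i) breaks down; this is the main obstacle, and it is exactly what the hypothesis $p^+\le\frac{2(N+4)}{N+2}$ resolves. Starting again from $\|v\|_{(\alpha)}^2=k(t)\int_\Omega|\nabla v|^{p(x)}\,dx$ and interpolation, I divide through by $\|v\|_{(\alpha)}^2$ to obtain $1\le k(t)\,C\big(\|v\|_{(\alpha)}^{a(p^-)p^--2}\|v\|_2^{(1-a(p^-))p^-}+\|v\|_{(\alpha)}^{a(p^+)p^+-2}\|v\|_2^{(1-a(p^+))p^+}\big)$. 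A direct computation shows $a(q)q=\frac{q}{2}+\frac{N}{4}(q-2)$ is increasing in $q$ and satisfies $a(q)q\le2$ iff $q\le\frac{2(N+4)}{N+2}$; hence the hypothesis forces $a(p^+)p^+\le2$ and, since $p^-\le p^+$, also $a(p^-)p^-\le2$, so both exponents of $\|v\|_{(\alpha)}$ above are nonpositive. Using $\|v\|_{(\alpha)}^2\ge c_1(t)$ once more, each factor $\|v\|_{(\alpha)}^{a(p^\pm)p^\pm-2}$ is bounded above by $c_1(t)^{(a(p^\pm)p^\pm-2)/2}$, leaving $1\le k(t)\,C''\big(\|v\|_2^{(1-a(p^-))p^-}+\|v\|_2^{(1-a(p^+))p^+}\big)$, which again forces $\|v\|_2$ below and gives $\lambdabar_\infty(t)>0$. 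The uniform statement $\underline{\lambdabar}_\infty>0$ follows as in (i): when $k$ is bounded, $c_1(t)$ and $k(t)$ admit $t$-uniform bounds. The only genuinely delicate point is the interpolation bookkeeping and the verification of the threshold $\frac{2(N+4)}{N+2}$; everything else is a routine combination of the Nehari identity with the already-established lower bound \eqref{eqn:norm-lower-bound-2}.
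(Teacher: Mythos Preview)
Your argument is correct and follows essentially the same strategy as the paper: Gagliardo--Nirenberg interpolation combined with the Nehari identity $\|v\|_{(\alpha)}^2=k(t)\int_\Omega|\nabla v|^{p(x)}\,dx$ and the two-sided (for (i)) respectively one-sided (for (ii)) control on $\|v\|_{(\alpha)}$ coming from \eqref{eqn:norm-lower-bound-2} and Lemma~\ref{lem:away-from-0}(ii). The only cosmetic difference is that the paper reduces to the single fixed exponent $p^+$ via the embedding $L^{p^+}(\Omega)\hookrightarrow L^{p(\cdot)}(\Omega)$ together with the modular--norm relation, whereas you split into the two endpoints $p^\pm$ via the pointwise bound $|\nabla v|^{p(x)}\le|\nabla v|^{p^-}+|\nabla v|^{p^+}$; both routes lead to the same threshold $p^+\le\frac{2(N+4)}{N+2}$.
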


\begin{proof}
    Fix $t \geq 0$ and choose $v \in \mathcal{N}(t)$. 
    It follows from the Gagliardo-Nirenberg inequality (see \cite{ADAMS200379}), 
    the embedding $L^{p^+}(\Omega) \hookrightarrow L^{p(\cdot)}(\Omega)$ and 
    $\|\cdot\|_{(\alpha)} \sim \|\cdot\|_{H^2(\mathbb{R}^N)}$
    that there exists a positive constant $\widetilde{C}_1$ such that 
    \begin{equation} \label{eqn:gagliardo-1}
        \|\nabla v\|_{p(\cdot)} \leq (|\Omega|+1) \|\nabla v\|_{p^+} 
        \leq \widetilde{C}_1 \|v\|_{(\alpha)}^{\theta} \|v\|_2^{1-\theta}, 
    \end{equation}
    where $\theta = \frac{1}{2} + \frac{N}{4}\left(1-\frac{2}{p^+}\right)$. 
    Due to the assumption (H1), it is easy to see 
    that $\theta \in \left(\frac{1}{2},1\right)$. Since $v \in \mathcal{N}(t)$, we have 
    \begin{align} \label{eqn:px-norm-lower}
        \|\nabla v\|_{p(\cdot)} & \geq \min \left\{ 
            \left(\int_{\Omega} |\nabla v|^{p(x)} dx\right)^{\frac{1}{p^-}}, 
            \left(\int_{\Omega} |\nabla v|^{p(x)} dx\right)^{\frac{1}{p^+}} 
        \right\} \notag \\
        & = \min \left\{ 
            \left(k(t)\right)^{-\frac{1}{p^-}} \|v\|_{(\alpha)}^{\frac{2}{p^-}}, 
            \left(k(t)\right)^{-\frac{1}{p^+}} \|v\|_{(\alpha)}^{\frac{2}{p^+}}
        \right\}.
    \end{align}
    Combining \eqref{eqn:gagliardo-1} and \eqref{eqn:px-norm-lower}, we obtain that
    \begin{equation} \label{eqn:2-norm-lower}
        \|v\|_2 \geq \min \left\{ 
        \left[\frac{1}{\widetilde{C}_1} 
        \left(k(t)\right)^{-\frac{1}{p^-}} \|v\|_{(\alpha)}^{\frac{2}{p^-}-\theta}
        \right]^{\frac{1}{1-\theta}}, 
        \left[\frac{1}{\widetilde{C}_1} 
        \left(k(t)\right)^{-\frac{1}{p^+}} \|v\|_{(\alpha)}^{\frac{2}{p^+}-\theta}
        \right]^{\frac{1}{1-\theta}}
    \right\}.
    \end{equation} 
    
    (i) Let $v \in \mathcal{N}_{\varsigma}(t)$. From Lemma \ref{lem:away-from-0}, it follows 
    that the right-hand side of \eqref{eqn:2-norm-lower} remains bounded 
    away from $0$ for each fixed $t \geq 0$ no 
    matter what the signs of $\frac{2}{p^-}-\theta$ and $\frac{2}{p^+}-\theta$ are. 
    Thus, $\lambdabar_{\varsigma}(t) > 0$ for any $t \geq 0$. Moreover,
    if $k(t)$ is bounded and $\varsigma > \overline{d}$, the right side of 
    \eqref{eqn:2-norm-lower} will have a uniform positive lower bound, which leads 
    to $\underline{\lambdabar}_{\varsigma}>0$.

    (ii) When $p^+ \leq \frac{2(N+4)}{N+2}$, it holds 
    that $\frac{2}{p^-}-\theta \geq \frac{2}{p^+}-\theta \geq 0$. 
    In this case, Lemma \ref{lem:away-from-0}(i) alone is still sufficient to conclude 
    that the right-hand side of \eqref{eqn:2-norm-lower} remains bounded away from $0$. 
    Therefore, $\lambdabar_{\infty}(t) > 0$ for any $t \geq 0$. 
    When $k(t)$ is bounded, the right-hand side of \eqref{eqn:2-norm-lower} still has 
    a uniform positive lower bound, hence $\underline{\lambdabar}_{\infty}>0$.
\end{proof}

In the study of blow-up properties, we consider solutions that satisfy a conservation law, 
as defined in the following.

\begin{definition}[Strong solution] \label{def:strong}
    A function 
    $u \in C([0,T];\mathcal{W}_0^{1,p(\cdot)}(\Omega))$ is said to be a 
    (strong) solution to problem \eqref{eqn:main} on $[0,T]$ if it is a 
    weak solution to problem \eqref{eqn:main} and the following conservation law 
    \begin{equation} \label{eqn:conservation-law}
        J(u(t);t) + \int_{0}^{t} \|\partial_t u(\tau)\|_2^2 d \tau 
        + \int_{0}^{t} \int_{\Omega} \frac{k'(\tau)}{p(x)}|\nabla u(\tau)|^{p(x)} dx d \tau 
        = J(u_0;0) 
    \end{equation}
    holds for $t \in [0,T]$.
\end{definition}

\begin{remark}
    (i) It is clear that any solution $u(t)$ to problem \eqref{eqn:main} on $[0,T]$ satisfies 
    $J(u(t);t) \in C([0,T])$. Combined with Lemma \ref{lem:modular-continuity}, this yields 
    $\|u(t)\|_{(\alpha)}^2 \in C([0,T])$ and $I(u(t);t) \in C([0,T])$. 

    (ii) One may obtain \eqref{eqn:conservation-law} by taking $\varphi = u_t$ in 
    \eqref{eqn:main-weak}. Unfortunately, this argument is only formal as it requires 
    $u$ to be sufficiently regular so that $u_t \in L^2(0,T;\mathcal{H}_0^2(\Omega))$. 
    Standard approximation arguments yield only an energy inequality, 
    namely \eqref{eqn:energy-inequality} presented in the next section. The conservation 
    law \eqref{eqn:conservation-law} is so essential that the analysis of blow-up properties, 
    both in this paper and in many previous works 
    \cite{LI201696,DONG201789,HAN2018451,DING20201046,GUO202245,ZHAO202412}, 
    is actually carried out for stronger solutions like those defined in 
    Definition \ref{def:strong} rather than for weak solutions directly. We emphasize this 
    distinction here in the interest of mathematical rigor; however, this does not diminish 
    the value of our work or those of prior works on blow-up behavior.
\end{remark}

To close this section, we provide the definitions of the maximal existence time and 
finite time blow-up for solutions to problem \eqref{eqn:main}.

\begin{definition}[Maximal existence time]
    Suppose that $u(t)$ is a weak (or strong) solution to problem \eqref{eqn:main}. We 
    say that a positive $T_*$ is the maximal existence time of $u(t)$ as follows:
    \begin{enumerate}[(i)]
        \item If for all $T' \in (0,+\infty)$, $u(t)$ exists on $[0,T']$, then 
        $T_* = +\infty$. In this case, $u$ is said to be global.
        \item If there exists a $T' \in (0,+\infty) $ such that $u(t)$ exists 
        on $[0,T]$ for all $T \in (0,T')$, but does not exist on $[0,T']$, then $T_*=T'$.
    \end{enumerate}
\end{definition}

\begin{definition}[Finite time blow-up]
    Suppose that $u(t)$ is a weak (or strong) solution to problem \eqref{eqn:main}. We say 
    that $u(t)$ blows up in finite time if its maximal existence time $T_*$ 
    is finite and 
    \[\lim_{t \nearrow  T_*} \|u(t)\|_{H^2(\mathbb{R}^N)} = + \infty.\]
\end{definition}

\section{Local well-posedness} \label{sec3} 
In this section, we will focus on the local well-posedness of 
problem \eqref{eqn:main} on $[0,T']$, where $T' \in (0,+\infty)$ can be chosen 
arbitrarily large. We henceforth assume that (H1)-(H3) are satisfied.

\begin{theorem} \label{thm:local-well-posedness}
    Given $u_0 \in \mathcal{H}_0^2(\Omega)$, there exists $T>0$ depending only on 
    $\|\Delta u_0\|_2$ such that 
    problem \eqref{eqn:main} possesses a unique weak solution $u$ on $[0,T]$ 
    with $u \in C([0,T];\mathcal{W}_0^{1,p(\cdot)}(\Omega)) \cap 
    C_w([0,T];\mathcal{H}_0^2(\Omega))$. Moreover, the following energy inequality 
    holds for a.e. $t \in [0,T]$:
    \begin{align} \label{eqn:energy-inequality}
        J(u(t);t) + \int_{0}^{t} \|\partial_t u(\tau)\|_2^2 d \tau 
        + \int_{0}^{t} \int_{\Omega} \frac{k'(\tau)}{p(x)}|\nabla u(\tau)|^{p(x)} dx d \tau 
        \leq J(u_0;0). 
    \end{align}
\end{theorem}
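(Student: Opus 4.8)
The plan is to recast \eqref{eqn:main} as an abstract semilinear parabolic equation and construct the solution by the Galerkin method, reserving Tartar's inequality \eqref{eqn:tartar-inequality} for the identification of the nonlinear limit and for uniqueness. Writing $\operatorname{div}F(t,x,\nabla u)=\Delta u-k(t)\operatorname{div}(|\nabla u|^{p(x)-2}\nabla u)$ and moving the linear diffusion to the left, \eqref{eqn:main} becomes $u_t+Lu=-k(t)\operatorname{div}(|\nabla u|^{p(x)-2}\nabla u)$, where $L=(-\Delta)^2+\alpha(-\Delta)^{2s}-\Delta$ is the operator associated with the bilinear form $a(u,v)=(\Delta u,\Delta v)+\alpha((-\Delta)^su,(-\Delta)^sv)+(\nabla u,\nabla v)$. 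By (H2) and \eqref{eqn:equivalent}, $a$ is bounded and coercive on $\mathcal{H}_0^2(\Omega)$ with $a(v,v)=\|v\|_{(\alpha)}^2\sim\|v\|_{H^2(\mathbb{R}^N)}^2$, so $L$ is positive and self-adjoint with form domain $D(L^{1/2})=\mathcal{H}_0^2(\Omega)$ and compact resolvent. I would take the Galerkin basis $\{w_j\}$ to be the eigenfunctions of $L$, orthonormal in $\mathcal{H}_0(\Omega)$ and orthogonal in $\mathcal{H}_0^2(\Omega)$, set $u_m=\sum_{j=1}^m g_{jm}(t)w_j$ with $u_m(0)=\sum_{j\le m}(u_0,w_j)w_j\to u_0$ in $\mathcal{H}_0^2(\Omega)$, and impose \eqref{eqn:main-weak} tested against each $w_j$. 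Since $p^->2$, the map $\xi\mapsto|\xi|^{p(x)-2}\xi$ is $C^1$, the resulting ODE system has a locally Lipschitz right-hand side, and Picard's theorem yields a unique solution on a maximal interval $[0,T_m)$.

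The a priori estimates come from testing with $\partial_t u_m$. Using $a(u_m,\partial_t u_m)=\tfrac12\frac{d}{dt}\|u_m\|_{(\alpha)}^2$ and the modular identity $\int_\Omega|\nabla u_m|^{p(x)-2}\nabla u_m\cdot\nabla\partial_t u_m\,dx=\frac{d}{dt}\int_\Omega\frac{1}{p(x)}|\nabla u_m|^{p(x)}\,dx$, one obtains the finite-dimensional analogue of \eqref{eqn:conservation-law}, namely $\|\partial_t u_m\|_2^2+\frac{d}{dt}J(u_m;t)+\int_\Omega\frac{k'(t)}{p(x)}|\nabla u_m|^{p(x)}\,dx=0$. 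Because $k'\ge 0$, this gives $J(u_m;t)\le J(u_m(0);0)$ and a bound on $\int_0^t\|\partial_t u_m\|_2^2$. The genuine difficulty, and \textbf{the main obstacle}, is to convert this into an $m$-uniform bound for $\|u_m(t)\|_{(\alpha)}$ on a common interval $[0,T]$ with $T$ depending only on $\|\Delta u_0\|_2$: the destabilizing sign of the backward-diffusion term means $J$ is not bounded below, so the energy identity controls $\|u_m\|_{(\alpha)}$ only implicitly through $\tfrac12\|u_m\|_{(\alpha)}^2\le J(u_m(0);0)+k(t)\int_\Omega\frac{1}{p(x)}|\nabla u_m|^{p(x)}\,dx$, and a \emph{superlinear} differential inequality must be closed. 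Estimating the gradient term by $\|\cdot\|_{(\alpha)}$ through the embedding $\mathcal{H}_0^2(\Omega)\hookrightarrow\mathcal{W}_0^{1,p(\cdot)}(\Omega)$ (constant $S_{p(\cdot)}$) and the norm–modular relations, I would aim at a bound $y'(t)\le C(1+y(t)^{q})$ with $y=\|u_m\|_{(\alpha)}^2$ and $q=q(p^{\pm})>1$, whose comparison solution survives up to a time $T=T(\|\Delta u_0\|_2)>0$. The variable exponent is a real complication here: one cannot differentiate $p(x)$ (there is no control on $\nabla p$ under (H1)), so I expect the higher-order quantities to be handled via the analytic-semigroup smoothing of $e^{-tL}$ applied to $\operatorname{div}(|\nabla u|^{p(x)-2}\nabla u)$, rather than by testing with $(-\Delta)^2u_m$.

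With a uniform bound $\sup_{[0,T]}\|u_m\|_{(\alpha)}\le R$ (which in turn bounds $J(u_m(T);T)$ from below and hence $\|u_m\|_{H^1(0,T;\mathcal{H}_0)}$), I would pass to the limit. Weak-$*$ compactness gives $u_m\rightharpoonup u$ in $L^\infty(0,T;\mathcal{H}_0^2(\Omega))$ and $\partial_t u_m\rightharpoonup\partial_t u$ in $L^2(0,T;\mathcal{H}_0(\Omega))$; by the compact embedding $\mathcal{H}_0^2(\Omega)\hookrightarrow\hookrightarrow\mathcal{W}_0^{1,p(\cdot)}(\Omega)$ and the Aubin–Lions–Simon lemma, $u_m\to u$ strongly in $C([0,T];\mathcal{W}_0^{1,p(\cdot)}(\Omega))$, so $\nabla u_m\to\nabla u$ in $L^{p(\cdot)}(\Omega)$ and, invoking \eqref{eqn:tartar-inequality} with the norm–modular relations, $|\nabla u_m|^{p(x)-2}\nabla u_m\to|\nabla u|^{p(x)-2}\nabla u$ in the dual variable-exponent space. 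This identifies the nonlinear term and verifies \eqref{eqn:main-weak} together with $u(0)=u_0$. The energy inequality \eqref{eqn:energy-inequality} then follows by integrating the Galerkin identity over $[0,t]$ and using weak lower semicontinuity of the convex functionals $v\mapsto\|v\|_{(\alpha)}^2$ and $v\mapsto\int_\Omega\frac{1}{p(x)}|\nabla v|^{p(x)}\,dx$, plus Fatou's lemma for $\int_0^t\|\partial_t u_m\|_2^2$; it is precisely the lower semicontinuity that downgrades the Galerkin equality to the stated inequality.

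Finally, for uniqueness I would take two solutions $u,v$ with the same data, set $w=u-v$, test the difference of the weak formulations with $w$, and bound $k(t)\int_\Omega(|\nabla u|^{p(x)-2}\nabla u-|\nabla v|^{p(x)-2}\nabla v)\cdot\nabla w\,dx$ using \eqref{eqn:tartar-inequality} and Hölder's inequality by a multiple of $\|\nabla w\|_{p(\cdot)}^2\lesssim\|w\|_{(\alpha)}^2$, with a constant depending on $R$. Since $a(w,w)=\|w\|_{(\alpha)}^2\ge 0$ is dissipative, this yields $\frac{d}{dt}\|w\|_2^2\le C(R)\|w\|_{(\alpha)}^2$, and a Gronwall argument with $w(0)=0$ forces $w\equiv 0$. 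The claimed regularity is then read off: $u\in L^\infty(0,T;\mathcal{H}_0^2(\Omega))\cap H^1(0,T;\mathcal{H}_0(\Omega))$ gives $u\in C_w([0,T];\mathcal{H}_0^2(\Omega))$ by the standard weak-continuity argument, while the strong $C([0,T];\mathcal{W}_0^{1,p(\cdot)}(\Omega))$ convergence of the approximations, consistent with Lemma \ref{lem:modular-continuity}, gives $u\in C([0,T];\mathcal{W}_0^{1,p(\cdot)}(\Omega))$.
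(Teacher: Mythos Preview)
Your overall framework---Galerkin approximation, energy identity via testing with $\partial_t u_m$, Aubin--Lions--Simon compactness, and Tartar's inequality for the nonlinear limit and uniqueness---matches the paper. The genuine gap is the a priori estimate, which you correctly flag as the main obstacle but do not resolve. Neither route you propose works: a differential inequality $y'\le C(1+y^q)$ for $y=\|u_m\|_{(\alpha)}^2$ cannot be extracted from the energy identity (which gives only the \emph{algebraic} relation $\tfrac12 y\le J(u_m(0);0)+\tfrac{k(t)}{p^-}\varrho(|\nabla u_m|)\lesssim 1+y^{p^+/2}$, vacuous once $y$ is large), and testing with $Lu_m$ to produce $y'$ would, as you note, require control on $\nabla p$. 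Switching to semigroup smoothing abandons the Galerkin setup and needs a separate fixed-point argument that you do not supply.

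The paper's actual device is a continuation argument on the modular $\varrho(|\nabla u_m(t)|)$. Fix $K_2$ strictly larger than the initial modular (uniformly in $m$, via $\|\Delta u_0\|_2$) and let $\widetilde{T}_m$ be the last time the modular stays $\le K_2$. On $[0,\widetilde{T}_m]$ the energy identity bounds $\|u_m\|_{(\alpha)}^2$ and $\int_0^t\|\partial_t u_m\|_2^2$ in terms of $K_2$. The key step is then Gagliardo--Nirenberg interpolation,
\[
\|\nabla u_m(t)-\nabla u_m(0)\|_{p(\cdot)}\lesssim\|\Delta(u_m(t)-u_m(0))\|_2^{\theta}\,\|u_m(t)-u_m(0)\|_2^{1-\theta},\qquad \theta\in(\tfrac12,1),
\]
combined with $\|u_m(t)-u_m(0)\|_2^2\le t\int_0^t\|\partial_t u_m\|_2^2$, so the $L^2$ factor carries a power of $t$. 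This shows the modular cannot reach $K_2$ before a time $T$ depending only on $\|\Delta u_0\|_2$, yielding the uniform bound. This interpolation/time-smallness idea is the missing ingredient in your plan. A secondary issue: your uniqueness step ends at $\frac{d}{dt}\|w\|_2^2\le C(R)\|w\|_{(\alpha)}^2$, which does not Gronwall; the paper closes it by the same interpolation $\|\nabla w\|_{p(\cdot)}^2\lesssim\|w\|_{(\alpha)}^{2\theta}\|w\|_2^{2(1-\theta)}$ followed by Young's inequality, absorbing $\|w\|_{(\alpha)}^2$ into the coercive term and leaving $\frac{d}{dt}\|w\|_2^2\le C\|w\|_2^2$.
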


\begin{proof}
    The proof relies on the standard Galerkin method \cite{LADYENSKAJA1968}, 
    as seen in works such as \cite{LI201696,MIYAKE2021247}, and we outline the process here. 

    \textbf{Step 1. Construct approximating solutions.} 
    We consider the orthonormal basis $\{\omega_i\}_{i=1}^{\infty}$ of 
    $\mathcal{H}_0(\Omega)$ formed by the eigenfunctions of $-\Delta$ with 
    homogeneous Dirichlet boundary conditions, alongside their nondecreasing sequence of 
    positive eigenvalues $\{\lambda_i\}_{i=1}^{\infty}$:
    \[\left\{
        \begin{array}{ll}
            -\Delta \omega_i = \lambda_i \omega_i & \textrm{in}~~ \Omega,  \\ 
            \omega_i = 0, & \textrm{in}~~ \mathbb{R}^N \backslash \Omega , 
        \end{array} 
    \right. \quad \|\omega_i\|_2 = 1, \quad 0 < \lambda_i \leq \lambda_{i+1}, 
    \quad i = 1,2, \dots.\]
    For each $m \in \mathbb{N}$ and $i = 1,2, \cdots, m$, the solution 
    $g^m(t) = \left(g_1^m(t), g_2^m(t), \cdots, g_m^m(t)\right)$ to the following Cauchy 
    problem for an ordinary differential system with respect to $t$ is considered:
    \begin{equation} \label{eqn:ode-system}
        \left\{
        \begin{array}{l} 
            \displaystyle \frac{d}{dt} g_i^m(t) + L_i\left(g^m(t)\right)
           = k(t) \int_{\Omega} \Big|\sum_{j=1}^m g_j^m(t) \nabla \omega_j \Big|^{p(x)-2} 
            \sum_{j=1}^m g_j^m(t) \nabla \omega_j \cdot \nabla \omega_i dx, \\ 
            g_i^m(0) = c_i^m \coloneqq (u_0, \omega_i),
        \end{array} 
        \right.
    \end{equation}
    where $L_i$ are linear functions of $g_1^m(t), g_2^m(t), \cdots, g_m^m(t)$ given by 
    \[L_i\left(g^m(t)\right)=\lambda_i^2 g_i^m(t) + \sum_{j=1}^m
    \left[\alpha \left((-\Delta)^s \omega_j, (-\Delta)^s \omega_i\right)
    + \int_{\Omega} \nabla \omega_j \cdot \nabla \omega_i d x\right] g_j^m(t).\]
    By Peano's theorem (see \cite{MATTER1982}) we know that there exist 
    a maximal existence time $T_m \in (0,T']$ 
    and a unique local solution $g^m(t) \in C^1\left([0,T_m); \mathbb{R}^m\right)$ to 
    \eqref{eqn:ode-system}. The local solutions can be extended uniquely to global ones 
    if $g_j^m(t)$ is bounded for  $j = 1,2,\dots,m$. The
    approximate solutions to the problem \eqref{eqn:main} are given by 
    \[u^m(t,x) = \sum_{j=1}^{m} g_j^m(t) \omega_j(x), \quad m = 1,2, \cdots, \quad
    (t,x) \in [0,T_m) \times \mathbb{R}^N,\]
    which satisfy 
    \[u^m(0,x) = \sum_{j=1}^{m} c_j^m \omega_j(x) \to u_0(x)\]
    strongly in $\mathcal{H}_0^2(\Omega)$ as $m \to \infty$.

    \textbf{Step 2. A priori estimates.} 
    Fix $m \in \mathbb{N}$. The approximate solution $u^m$ satisfies 
    \begin{align} \label{eqn:integral-identity-base}
    &\frac{d}{dt} (u^m(t), \omega_i) + (\Delta u^m(t), \Delta \omega_i) + 
    \alpha ((-\Delta)^s u^m(t), (-\Delta)^s \omega_i) 
    + \int_{\Omega} \nabla u^m(t) \cdot \nabla \omega_i d x \notag \\ 
    &= k(t) \int_{\Omega} |\nabla u^m(t)|^{p(x)-2} \cdot \nabla \omega_i d x    
    \end{align}
    for $t \in [0,T_m)$ and $i = 1,2, \cdots, m$. Multiplying 
    \eqref{eqn:integral-identity-base} by $\partial_t g_i^m(t)$, 
    summing over $i = 1,2,\cdots,m$ and integrating it on $(0,t)$, we obtain that
    \begin{equation} \label{eqn:decreasing-integral-um}
        J(u^m(t);t) + \int_{0}^{t} \|\partial_t u^m(\tau)\|_2^2 d \tau 
        + \int_{0}^{t} \int_{\Omega} \frac{k'(\tau)}{p(x)}|\nabla u^m(\tau)|^{p(x)} dx d \tau 
        = J(u^m(0);0),
    \end{equation}
    which implies that $J(u^m(t);t)$ is nonincreasing with respect to $t \in [0,T_m)$. 
    It follows from the convergence $u^m(0,x) \to u_0(x)$ in $\mathcal{H}_0^2(\Omega)$ 
    that $J(u^m(0);0) \to J(u_0;0)$ as $m \to \infty$. Set 
    \[K_1 \coloneqq \sup_{m \in \mathbb{N}} J(u^m(0);0),\]
    then $K_1$ is finite and independent of $m$. From the definition of $u^m(0)$, we have
    \[\|\Delta u^m(0)\|_2^2 = \sum_{j=1}^{m} (\lambda_j u_0, \omega_j)^2 = 
      \sum_{j=1}^{m} (\Delta u_0,  \Delta(\lambda_j^{-1}\omega_j))^2 
      \leq \|\Delta u_0\|_2^2.\]
    Due to $\|\cdot\|_{(\alpha)} \sim \|\cdot\|_{H^2(\mathbb{R}^N)}$, there exists 
    a constant $B_1>0$ such that 
    \[\|u^m(0)\|_{(\alpha)} \leq B_1 \|\Delta u^m(0)\|_2 \leq B_1 \|\Delta u_0\|_2.\]
    Set $B_{p(\cdot)} \coloneqq B_1 S_{p(\cdot)}$. Choose 
    $K_2 > \max \left\{B_{p(\cdot)}^{p^-}\|\Delta u_0\|_2^{p^-}, 
    B_{p(\cdot)}^{p^+}\|\Delta u_0\|_2^{p^+}\right\}$, then 
    \begin{align*}
        \int_{\Omega} |\nabla u^m(0)|^{p(x)} dx 
        &\leq \max \left\{\|\nabla u^m(0)\|_{p(\cdot)}^{p^-}, 
        \|\nabla u^m(0)\|_{p(\cdot)}^{p^+} \right\} \\
        &\leq \max \left\{S_{p(\cdot)}^{p^-} \|u^m(0)\|_{(\alpha)}^{p^-}, 
        S_{p(\cdot)}^{p^+} \|u^m(0)\|_{(\alpha)}^{p^+} \right\} \\
        &\leq \max \left\{B_{p(\cdot)}^{p^-} \|\Delta u_0\|_2^{p^-}, 
        B_{p(\cdot)}^{p^+} \|\Delta u_0\|_2^{p^+}\right\} < K_2.
    \end{align*}
    Set 
    \[\widetilde{T}_m \coloneqq \sup \left\{ \tau \in (0,T_m]: \sup_{t \in [0,\tau]}
    \int_{\Omega} |\nabla u^m(t)|^{p(x)} dx \leq K_2\right\},\]
    then $\widetilde{T}_m>0$, since 
    $u^m \in C\big([0,T_m);\mathcal{W}_0^{1,p(\cdot)}(\Omega)\big)$. For 
    $t \in [0,\widetilde{T}_m]$, it holds that
    \begin{equation} \label{eqn:a-priori-tm-1}
        \|u^m(t)\|_{(\alpha)}^2 = 2J(u^m(t);t)+2k(t)
    \int_{\Omega} \frac{1}{p(x)}|\nabla u^m(t)|^{p(x)} dx 
    \leq 2K_1 + \frac{2k(t)}{p^-}K_2.
    \end{equation}
    and
    \begin{align} \label{eqn:a-priori-tm-2}
       \int_{0}^{t} \|\partial_t u^m(\tau)\|_2^2 d \tau &= J(u^m(0);0)-J(u^m(t);t) 
        - \int_{0}^{t} \int_{\Omega} \frac{k'(\tau)}{p(x)}|\nabla u^m(\tau)|^{p(x)} dx d \tau \notag \\ 
        & \leq J(u^m(0);0) - \frac{1}{2}\|u^m(t)\|_{(\alpha)}^2 
        + k(t) \int_{\Omega} \frac{1}{p(x)}|\nabla u^m(t)|^{p(x)} dx \notag \\
        & \leq K_1 + \frac{k(t)}{p^-}K_2.
    \end{align}
    By using $\|\cdot\|_{(\alpha)} \sim \|\cdot\|_{H^2(\mathbb{R}^N)}$ again 
    and combining with \eqref{eqn:a-priori-tm-1}, recalling (H3), it follows that 
    there exists a constant $c_1>0$ such that 
    \begin{equation} \label{eqn:diff-delta-norm}
        \|\Delta u^m(t)-\Delta u^m(0)\|_2 \leq c_1 \|u^m(t)-u^m(0)\|_{(\alpha)} 
        \leq 2c_1 \left(2K_1 + \frac{2k(\widetilde{T}_m)}{p^-}K_2\right)^{\frac{1}{2}}
    \end{equation} 
    for $t \in [0,\widetilde{T}_m]$. From \eqref{eqn:a-priori-tm-2} it follows that 
    \begin{equation} \label{eqn:diff-norm}
        \|u^m(t)-u^m(0)\|_2^2 
        = \int_{\Omega} \left(\int_{0}^{t} \partial_t u^m(\tau) d \tau\right)^2 dx 
        \leq t \int_{0}^{t} \|\partial_t u^m(\tau)\|_2^2 d \tau 
        \leq \left(K_1 + \frac{k(\widetilde{T}_m)}{p^-}K_2\right) \widetilde{T}_m
    \end{equation} 
    for $t \in [0,\widetilde{T}_m]$. The Gagliardo-Nirenberg inequality and 
    the embedding $L^{p^+}(\Omega) \hookrightarrow L^{p(\cdot)}(\Omega)$ indicate 
    that there exists a constant $\widetilde{C}_2>0$ such that 
    \begin{align} \label{eqn:gagliardo-2}
        \|\nabla u^m(t) - \nabla u^m(0)\|_{p(\cdot)} 
        & \leq (|\Omega|+1) \|\nabla u^m(t) - \nabla u^m(0)\|_{p^+} \notag \\
        & \leq \widetilde{C}_2 \|\Delta u^m(t) - \Delta u^m(0)\|_2^{\theta} 
        \|u^m(t) - u^m(0)\|_2^{1-\theta}, 
    \end{align}
    for $t \in [0,\widetilde{T}_m]$, where 
    $\theta = \frac{1}{2} + \frac{N}{4}\left(1-\frac{2}{p^+}\right)$. Combining with
    \eqref{eqn:diff-delta-norm}, \eqref{eqn:diff-norm} and \eqref{eqn:gagliardo-2}, 
    we deduce that 
    \begin{align} 
        & \min \left\{ \left(\int_{\Omega} |\nabla u^m(t)|^{p(x)} dx\right)^{\frac{1}{p^-}}, 
        \left(\int_{\Omega} |\nabla u^m(t)|^{p(x)} dx\right)^{\frac{1}{p^+}}\right\} \notag \\
        & \leq \|\nabla u^m(t)\|_{p(\cdot)} \leq \|\nabla u^m(0)\|_{p(\cdot)} 
        + \|\nabla u^m(t) - \nabla u^m(0)\|_{p(\cdot)} \notag \\
        & \leq B_{p(\cdot)} \|\Delta u_0\|_2 + 8^{\frac{\theta}{2}}c_1^{\theta} \widetilde{C}_2 
        \left(K_1 + \frac{k(\widetilde{T}_m)}{p^-}K_2\right)^{\frac{1}{2}}
        \widetilde{T}_m^{\frac{1-\theta}{2}}
    \end{align}
    for $t \in [0,\widetilde{T}_m]$. 
    Setting $\widetilde{K}_2 \coloneqq \min \left\{ K_2^{\frac{1}{p^-}}, K_2^{\frac{1}{p^+}}\right\}$, 
    the definition of $\widetilde{T}_m$ implies that 
    \[0 < \widetilde{K}_2 - B_{p(\cdot)} \|\Delta u_0\|_2 \leq 8^{\frac{\theta}{2}}c_1^{\theta} \widetilde{C}_2 
    \left(K_1 + \frac{k(\widetilde{T}_m)}{p^-}K_2\right)^{\frac{1}{2}}
    \widetilde{T}_m^{\frac{1-\theta}{2}}.\]
    (H3) ensures that there exists $T \in (0,\widetilde{T}_m]$ independent of $m$ such that
    \[8^{\frac{\theta}{2}}c_1^{\theta} \widetilde{C}_2 
    \left(K_1 + \frac{k(T)}{p^-}K_2\right)^{\frac{1}{2}} T^{\frac{1-\theta}{2}} = 
    \widetilde{K}_2 - B_{p(\cdot)} \|\Delta u_0\|_2.\]
    By \eqref{eqn:a-priori-tm-1} and 
    \eqref{eqn:a-priori-tm-2}, we arrive at the following a priori estimate:
    \begin{equation} \label{eqn:a-priori}
        \sup_{t \in [0,T]} \|u^m(t)\|_{(\alpha)}^2 +
        \int_{0}^{T} \|\partial_t u^m(\tau)\|_2^2 d \tau \leq 3K_1 + \frac{3k(T)}{p^-}K_2.
    \end{equation}

    \textbf{Step 3. Pass to the limits.} 
    It follows from \eqref{eqn:a-priori} and the Aubin-Lions-Simon compactness theorem 
    \cite[Theorem II.5.16]{BOYER2013} that 
    there exist a function $u \in L^{\infty}(0,T;\mathcal{H}_0^2(\Omega)) \cap 
    H^1(0,T;\mathcal{H}_0(\Omega))$ and a subsequence of $\{u^m\}$, which will not be 
    relabeled, such that 
    \begin{subequations}  
        \begin{empheq}{alignat=3}
            &u^m \rightharpoonup u \quad \textrm{weakly-*} && \ \textrm{in} 
            \quad  && L^{\infty}(0,T;\mathcal{H}_0^2(\Omega)), \label{eqn:convergence-1} \\ 
            &u^m \rightharpoonup u \quad \textrm{weakly} && \ \textrm{in}  
            \quad  && H^1(0,T;\mathcal{H}_0(\Omega)), \label{eqn:convergence-2} \\
            &u^m \to u \quad \textrm{strongly} && \ \textrm{in} 
            \quad  && C([0,T];\mathcal{W}_0^{1,p(\cdot)}(\Omega)) \label{eqn:convergence-3} 
        \end{empheq}
    \end{subequations}
    as $m \to \infty$. It follows from \eqref{eqn:convergence-1} and 
    \eqref{eqn:convergence-3} that $C_w([0,T];\mathcal{H}_0^2(\Omega))$.
    Now we show that $u$ is a weak solution to 
    problem \eqref{eqn:main} on $[0,T]$. Fix $n \in \mathbb{N}$. For $m \geq n$,
    multiplying \eqref{eqn:integral-identity-base} by arbitrarily given 
    $\zeta_i(t) \in C^1([0,T])$, summing over $i=1,2,\cdots,n$ and integrating it 
    on $(0,t)$ so that $t \leq T$, we get 
    \begin{align} \label{eqn:integral-identity-base-time}
        &\int_{0}^{t} \big[(\partial_t u^m, \psi) + (\Delta u^m, \Delta \psi) + 
        \alpha ((-\Delta)^s u^m, (-\Delta)^s \psi) \big] dt \notag \\ 
        &= -\int_{0}^{t} \int_{\Omega} \left(1-k(\tau) |\nabla u^m|^{p(x)-2}\right) 
        \nabla u^m \cdot \nabla \psi dxd\tau = -\int_{0}^{t} \int_{\Omega} F(\tau, x, \nabla u^m) 
        \cdot \nabla \psi dxd\tau   
    \end{align}
    for any $\psi \in C^1([0,T];\mathcal{H}_0^2(\Omega))$ of the form 
    $\psi(t,x) = \sum_{j=1}^n \zeta_j(t) \omega_j(x)$. With the help 
    of \eqref{eqn:tartar-inequality}, we deduce from 
    \eqref{eqn:integral-identity-base-time} and 
    \eqref{eqn:convergence-1}-\eqref{eqn:convergence-3} that 
    \begin{equation} \label{eqn:integral-identity-time}
        \int_{0}^{t} \bigg[(\partial_t u, \psi) + (\Delta u, \Delta \psi) + 
        \alpha ((-\Delta)^s u, (-\Delta)^s \psi)  
        + \int_{\Omega} F(\tau, x, \nabla u)   
        \cdot \nabla \psi dx \bigg]d\tau = 0.   
    \end{equation}
    It is easy to see that \eqref{eqn:integral-identity-time} also holds 
    for any $\psi \in L^2(0,T;\mathcal{H}_0^2(\Omega))$  
    by a density argument. The arbitrariness of $t \in (0,T]$ implies that 
    \eqref{eqn:main-weak} holds for any $\varphi \in \mathcal{H}_0^2(\Omega)$ 
    and almost all $t \in (0,T)$. The existence of weak solutions to 
    problem \eqref{eqn:main} is proved. From \eqref{eqn:decreasing-integral-um}, 
    \eqref{eqn:convergence-1}, \eqref{eqn:convergence-2} and weak lower semicontinuity,
    it follows that \eqref{eqn:energy-inequality} holds for a.e. $t \in [0,T]$.
    
    \textbf{Step 4. Uniqueness.} 
    We suppose that $u_1$ and $u_2$ are two weak solutions to problem \eqref{eqn:main} 
    on $[0,T]$, then for any $\varphi \in \mathcal{H}_0^2(\Omega)$ 
    and almost all $t \in (0,T)$, the function $w \coloneqq u_1 - u_2$ satisfies
    \begin{align} \label{eqn:subtract}
        & (\partial_t w, \varphi) +  (\Delta w, \Delta \varphi) + 
        \alpha ((-\Delta)^s w, (-\Delta)^s \varphi) + 
        \int_{\Omega} \nabla w \cdot \nabla \varphi dx \notag \\ 
        &= k(t) \int_{\Omega} \left(|\nabla u_1|^{p(x)-2} \nabla u_1 
        - |\nabla u_2|^{p(x)-2} \nabla u_2\right) \cdot \nabla \varphi dx
    \end{align}
    and $w(0)=0$. Taking $\varphi=w$ in \eqref{eqn:subtract} and integrating 
    over $(0,t)$ for $t \in (0,T]$, we obtain that 
    \begin{equation} \label{eqn:subtract-integral}
        \frac{1}{2}\|w(t)\|_2^2  + 
        \int_{0}^{t} \|w(\tau)\|_{(\alpha)}^2 d\tau 
        = \int_{0}^{t} k(\tau) \int_{\Omega} \left(|\nabla u_1|^{p(x)-2} \nabla u_1 
        - |\nabla u_2|^{p(x)-2} \nabla u_2\right) \cdot \nabla w dx d \tau.
    \end{equation}
    From now until the end of this proof, $C>0$ denotes a 
    constant (may depending on $u_1$ and $u_2$), which may take 
    different values at different places. Set $q(x)=\frac{p(x)}{p(x)-1}$ and 
    $r(x)=\frac{p(x)}{p(x)-2}$. For all $t \in (0,T)$, it follows 
    from H{\"o}lder's inequality, Tartar's type inequality, the 
    Gagliardo-Nirenberg inequality and Young's inequality that 
    \begin{align} \label{eqn:right-side-estimate}
        & k(t)\int_{\Omega} \left(|\nabla u_1|^{p(x)-2} \nabla u_1 - |\nabla u_2|^{p(x)-2} \nabla u_2
        \right) \cdot \nabla w dx \notag \\ 
        & \leq 2k(T) \left\|\left| |\nabla u_1|^{p(x)-2} \nabla u_1 
        - |\nabla u_2|^{p(x)-2} \nabla u_2\right| 
        \right\|_{q(\cdot)} \left\|\nabla w\right\|_{p(\cdot)} \notag \\ 
        & \leq C (p^+-1)^{2-\frac{2}{p^+}} \left\|
        \left(|\nabla u_1|^{p(x)}+|\nabla u_2|^{p(x)}\right)^{\frac{p(x)-2}{p(x)}} 
        |\nabla u_1 - \nabla u_2|\right\|_{q(\cdot)} 
        \left\|\nabla w\right\|_{p(\cdot)} \notag \\ 
        & \leq C \left\|\left(|\nabla u_1|^{p(x)}+|\nabla u_2|^{p(x)}\right)^{\frac{1}{r(x)}}
        \right\|_{r(\cdot)} \left\|\nabla w\right\|_{p(\cdot)}^2 \notag \\ 
        & \leq C \max \left\{ \left(\varrho(|\nabla u_1|) + \varrho(|\nabla u_2|)
        \right)^{\frac{p^--2}{p^-}}, 
        \left(\varrho(|\nabla u_1|) + \varrho(|\nabla u_2|)\right)^{\frac{p^+-2}{p^+}}\right\} 
        \left\|\nabla w\right\|_{p(\cdot)}^2 \notag \\ 
        & \leq C \|w\|_{(\alpha)}^{2\theta} \|w\|_{2}^{2-2\theta} 
        \leq \|w\|_{(\alpha)}^2 + (C \theta^{\theta})^{\frac{1}{1-\theta}} \|w\|_2^2 \notag \\ 
        & \leq \|w\|_{(\alpha)}^2 + C \|w\|_2^2.
    \end{align}
Combining \eqref{eqn:subtract-integral} with \eqref{eqn:right-side-estimate} we know that
\[\|w(t)\|_2^2 \leq C \int_{0}^{t} \|w(\tau)\|_2^2 d \tau\]
for all $t \in (0,T)$. It follows from Gr{\"o}nwall's inequality that $w \equiv 0$ 
a.e. in $(0,T) \times \mathbb{R}^N$ and the proof of the uniqueness of weak solutions 
to problem \eqref{eqn:main} is thereby completed. 
\end{proof}

\begin{remark}
    Definition \ref{def:main-weak} (\ref{enm:main-weak-2}) can be understood as 
    $u(t,\cdot) \to u_0$ strongly in $\mathcal{H}_0(\Omega)$. 
    We can also interpret it as 
    $u(t,\cdot) \rightharpoonup u_0$ weakly in $\mathcal{H}_0^2(\Omega)$ 
    since $u \in C_w([0,T];\mathcal{H}_0^2(\Omega))$.
    Moreover, $u \in L^{\infty}(0,T;\mathcal{H}_0^2(\Omega))$ implies that there exists 
    $N \subset [0,T]$ of measure zero such that for all $t \in [0,T] \backslash N$, 
    $\|u(t)\|_{(\alpha)} < +\infty$. For $t \in N$, we can approximate $t$ by 
    a sequence $\{t_j\} \subset [0,T] \backslash N$. Since 
    $u \in C_w([0,T];\mathcal{H}_0^2(\Omega))$, we have 
    $u(t_j) \rightharpoonup u(t)$ weakly in $\mathcal{H}_0^2(\Omega)$,  
    which implies that $t \in [0,T] \backslash N$ for each there exists 
    $\varXi(t) > 0$ independent of $j$ such that $\|u(t_j)\|_{(\alpha)} \leq \varXi(t)$ 
    for every $j \in \mathbb{N}$. Consequently, $\|u(t)\|_{(\alpha)} \leq 
    \varliminf_{j \to \infty} \|u(t_j)\|_{(\alpha)} \leq \varXi(t) < +\infty$. 
    Therefore, for all $t \in [0,T]$, we conclude that 
    $\|\Delta u(t)\|_{2} \sim \|u(t)\|_{H^2(\mathbb{R}^N)} \sim \|u(t)\|_{(\alpha)} 
    < +\infty$. 
    From the above local existence theorem and the standard continuation procedure, 
    it follows that as long as $\|\Delta u\|_{2}$ remains bounded, 
    the weak solution can be extended. Using this procedure, we can also derive the 
    following alternative property.
\end{remark}

\begin{corollary}
    The weak solution to problem \eqref{eqn:main} either exists globally or 
    blows up in finite time.    
\end{corollary}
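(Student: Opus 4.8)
The plan is to argue by the standard continuation procedure, reducing the dichotomy to the principle flagged in the preceding remark: a weak solution can be extended as long as $\|\Delta u(t)\|_2$ remains bounded. By the definition of the maximal existence time $T_*$, exactly one of the following holds: $T_* = +\infty$, in which case $u$ is global and there is nothing to prove; or $T_* < +\infty$. It therefore suffices to show that in the latter case $\lim_{t \nearrow T_*} \|u(t)\|_{H^2(\mathbb{R}^N)} = +\infty$, which is precisely finite time blow-up.

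First I would assume, for contradiction, that $T_* < +\infty$ but the $H^2$-norm does not diverge, i.e. $\varliminf_{t \nearrow T_*} \|u(t)\|_{H^2(\mathbb{R}^N)} < +\infty$. Using the norm equivalence $\|\Delta u(t)\|_2 \sim \|u(t)\|_{H^2(\mathbb{R}^N)}$ recorded in the remark, this produces a sequence $t_n \nearrow T_*$ and a constant $M > 0$ with $\|\Delta u(t_n)\|_2 \leq M$ for all $n$.

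The key step is a restart argument. For each $n$ I would regard $u(t_n)$ as new initial data and solve problem \eqref{eqn:main} with the coefficient $k(\cdot)$ replaced by its shift $k(\cdot + t_n)$; this shifted problem still satisfies (H1)--(H3), and since $k \in C^1([0,+\infty))$ is nondecreasing, on any interval $[t_n, t_n + 1]$ the relevant values of $k$ are bounded by $k(T_* + 1)$, a constant independent of $n$. Inspecting the way the existence time is selected in the proof of Theorem \ref{thm:local-well-posedness} — determined by $\|\Delta u_0\|_2$ together with the pertinent values of $k$ through $K_1$, $K_2$ and $k(T)$ — I would conclude that there is a uniform $\tau_0 \in (0,1]$, depending only on $M$ and $k(T_* + 1)$, such that the restarted problem admits a weak solution on $[t_n, t_n + \tau_0]$. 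Choosing $n$ so large that $T_* - t_n < \tau_0$ and invoking the uniqueness part of Theorem \ref{thm:local-well-posedness} to glue this solution with $u$ on the overlap $[t_n, T_*)$, I obtain a weak solution on $[0, t_n + \tau_0]$ with $t_n + \tau_0 > T_*$, contradicting the maximality of $T_*$. Hence $\varliminf_{t \nearrow T_*} \|u(t)\|_{H^2(\mathbb{R}^N)} = +\infty$, which forces the full limit to be $+\infty$ and yields blow-up.

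I expect the main obstacle to be the uniformity of the local existence time under restarting: the time in Theorem \ref{thm:local-well-posedness} is stated as depending on $\|\Delta u_0\|_2$ for the problem issued from $t = 0$, whereas here both the initial time $t_n$ and the shifted coefficient $k(\cdot + t_n)$ vary with $n$. The monotonicity and continuity of $k$ in (H3) are exactly what guarantee a single $\tau_0 > 0$ valid for all $n$; making this uniform dependence explicit, rather than merely qualitative, is the delicate point of the argument.
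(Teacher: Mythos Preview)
Your proposal is correct and follows essentially the same contradiction-and-restart argument as the paper's proof: assume $T_*<\infty$ without blow-up, extract a sequence $t_n\nearrow T_*$ along which the $H^2$-norm stays bounded, restart the local existence theorem at $u(t_n)$ to obtain a uniform extension time, and contradict maximality. You are more explicit than the paper about the dependence of the local existence time on the shifted coefficient $k(\cdot+t_n)$ and about why (H3) yields a uniform $\tau_0$; this is a genuine subtlety that the paper's proof leaves implicit in the phrase ``$\widehat{T}=\widehat{T}(\Theta)$'', so your care here is warranted rather than superfluous.
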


\begin{proof}
    To see this we argue by contradiction. Let $u(t)$ be a weak solution to problem 
    \eqref{eqn:main}. Suppose that its maximal existence time 
    $T_*$ is finite but it does not blow up. That is, there exist a $\varTheta>0$ and 
    a sequence $\{t_j\}$ such that $t_j \nearrow T_*$ as $t_j \to \infty$ 
    and $\|u(t_j)\|_{(\alpha)} \leq \varTheta$ for all $j \in \mathbb{N}$. As shown in 
    the proof of the local existence theorem above, there exists 
    $\widehat{T}=\widehat{T}(\varTheta)>0$ 
    such that for each $j \in \mathbb{N}$, $u(t)$ exists on $[t_j,t_j+\widehat{T}]$. 
    When $j$ is sufficiently large, we have $T_* < t_j+\widehat{T}$, which contradicts 
    the assumption that $T_*$ is the maximal existence interval. This completes the proof.
\end{proof}

\section{Global existence and finite time blow-up of solutions with \texorpdfstring{$J(u_0; 0) \leq \underline{d}$}{Lg}} \label{sec4} 

\subsection{The case \texorpdfstring{$J(u_0; 0) < \underline{d}$}{Lg}}

We begin by proving the continuity of $I(u^m(t);t)$ with respect to $t$. 

\begin{lemma} 
    For each $m \in \mathbb{N}$, $I(u^m(t);t) \in C([0,T_m))$, where $u^m(t)$ 
    denotes the approximate solutions constructed in the proof of Theorem 
    \ref{thm:local-well-posedness} and $T_m$ denotes their maximal existence time. 
    Moreover, for $t \in [0,T_m)$, it holds that 
    \begin{equation} \label{eqn:integral-identity-base-2}
    \frac{1}{2} \|u^m(t)\|_2^2 + \int_{0}^{t} I(u^m(\tau);\tau) d \tau 
    = \frac{1}{2} \|u^m(0)\|_2^2.
    \end{equation}
\end{lemma}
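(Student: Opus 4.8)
The plan is to handle the two assertions in sequence: first the continuity of $t\mapsto I(u^m(t);t)$, and then the integral identity, which I would obtain by testing the Galerkin equations against $u^m$ itself rather than against $\partial_t u^m$ (the latter having produced the energy identity \eqref{eqn:decreasing-integral-um}).

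For the continuity, I would start from the fact that, by the construction in the proof of Theorem \ref{thm:local-well-posedness}, the coefficients $g_j^m$ lie in $C^1([0,T_m))$, so $u^m(t)=\sum_{j=1}^m g_j^m(t)\omega_j$ is a $C^1$ curve inside the finite-dimensional space $\operatorname{span}\{\omega_1,\dots,\omega_m\}\subset\mathcal{H}_0^2(\Omega)$. Since all norms are equivalent on a finite-dimensional space, $t\mapsto u^m(t)$ is continuous into $\mathcal{H}_0^2(\Omega)$, whence the quadratic part $\|u^m(t)\|_{(\alpha)}^2=\|\nabla u^m(t)\|_2^2+\|\Delta u^m(t)\|_2^2+\alpha\|(-\Delta)^s u^m(t)\|_2^2$ is continuous in $t$. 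Through the embedding $\mathcal{H}_0^2(\Omega)\hookrightarrow\mathcal{W}_0^{1,p(\cdot)}(\Omega)$, the same curve is continuous into $\mathcal{W}_0^{1,p(\cdot)}(\Omega)$, so Lemma \ref{lem:modular-continuity} (applied with $\kappa\equiv 1$) gives continuity of $\int_\Omega|\nabla u^m(t)|^{p(x)}\,dx$; combined with $k\in C^1$ from (H3), the product $k(t)\int_\Omega|\nabla u^m(t)|^{p(x)}\,dx$ is continuous as well. Subtracting the two pieces yields $I(u^m(\cdot);\cdot)\in C([0,T_m))$.

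For the identity, I would return to the Galerkin equations \eqref{eqn:integral-identity-base}, multiply by $g_i^m(t)$, and sum over $i=1,\dots,m$. Exploiting $\sum_i g_i^m(t)\omega_i=u^m(t)$ together with the $L^2$-orthonormality of $\{\omega_i\}$ and the symmetry of the bilinear forms, the linear terms reassemble: the time-derivative term becomes $(\partial_t u^m,u^m)=\tfrac12\tfrac{d}{dt}\|u^m\|_2^2$, the biharmonic, nonlocal, and gradient terms collapse to $\|\Delta u^m\|_2^2$, $\alpha\|(-\Delta)^s u^m\|_2^2$, and $\|\nabla u^m\|_2^2$, while the right-hand side becomes $k(t)\int_\Omega|\nabla u^m|^{p(x)}\,dx$. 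Recalling the definition of $I$, this is exactly
\[\tfrac12\tfrac{d}{dt}\|u^m(t)\|_2^2+I(u^m(t);t)=0.\]
Integrating over $(0,t)$, which is legitimate since $I(u^m(\cdot);\cdot)$ is continuous and hence integrable by the first part, produces \eqref{eqn:integral-identity-base-2}.

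I do not anticipate a genuine obstacle in either step, as this is essentially the standard Galerkin ``multiply by the solution'' computation. The only delicate point is the continuity of the variable-exponent modular term $\int_\Omega|\nabla u^m(t)|^{p(x)}\,dx$, where the nonstandard growth blocks a naive pointwise argument; this is precisely the role of Lemma \ref{lem:modular-continuity}, so I would invoke it directly rather than reproduce the underlying H\"older estimate. The remaining care is purely bookkeeping: keeping the reassembly of each linear term honest via $\sum_i g_i^m\omega_i=u^m$, which requires no analytic input beyond what is already available.
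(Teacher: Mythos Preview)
Your proposal is correct. The derivation of the integral identity \eqref{eqn:integral-identity-base-2} is essentially identical to the paper's: both multiply \eqref{eqn:integral-identity-base} by $g_i^m(t)$, sum over $i$, recognize $I(u^m(t);t)=-\tfrac12\tfrac{d}{dt}\|u^m(t)\|_2^2$, and integrate.

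For the continuity statement, however, you take a genuinely different and more direct route. The paper first establishes the pointwise identity $I(u^m(t);t)=-(u^m(t),\partial_t u^m(t))$ and then argues continuity of this inner product via a strong/weak splitting, using $u^m\in C([0,\widehat{T}];\mathcal{H}_0(\Omega))$ together with $\partial_t u^m\in C_w([0,\widehat{T}];\mathcal{H}_0(\Omega))$. You instead work straight from the definition of $I$: finite-dimensionality of the Galerkin subspace makes $t\mapsto\|u^m(t)\|_{(\alpha)}^2$ continuous immediately, and Lemma~\ref{lem:modular-continuity} disposes of the variable-exponent modular term. Your argument is more elementary and avoids any weak-continuity machinery; the paper's route has the minor structural advantage that continuity drops out as a byproduct of the differential identity already needed for \eqref{eqn:integral-identity-base-2}, but given that $u^m$ is $C^1$ into a finite-dimensional space this detour is not really necessary.
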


\begin{proof}
    Multiplying \eqref{eqn:integral-identity-base} by 
    $g_i^m(t)$ and summing over $i = 1,2,\cdots,m$, we obtain that
    \begin{equation} \label{eqn:nehari-identity-base}
        I(u^m(t);t) = -(u^m(t), \partial_t u^m(t)) 
        = - \frac{1}{2} \frac{d}{dt} \|u^m(t)\|_2^2
    \end{equation}
    for $t \in [0,T_m)$. Fix $\widehat{T} \in (0,T_m)$. Since 
    $u^m \in C^1([0,T_m);\mathcal{H}_0^2(\Omega))$, it follows that
    $u^m \in C([0,\widehat{T}];\mathcal{H}_0(\Omega))$ and 
    $\partial_t u^m \in C_w([0,\widehat{T}];\mathcal{H}_0(\Omega))$.
    For any $t \in [0,\widehat{T}]$, consider a sequence $\{t_j\}$ 
    such that $t_j \to t$ as $j \to \infty$, then 
    $u(t_j) \rightharpoonup u(t)$ weakly in $\mathcal{H}_0(\Omega)$. 
    Therefore, there exists a constant $\varXi > 0$ independent of $j$ 
    such that $\|u(t_j)\|_{2} \leq \varXi$ for each $j \in \mathbb{N}$.
    Then 
    \begin{align*} 
         \left|I(u^m(t);t) - I(u^m(t_j);t_j)\right| 
        & = \left|(u^m(t), \partial_t u^m(t)) - (u^m(t_j), \partial_t u^m(t_j))\right| \\
        & \leq \left|(u^m(t)-u^m(t_j), \partial_t u^m(t_j))\right| + 
        \left|(u^m(t)-u^m(t_j), \partial_t u^m(t))\right| \\
        & \leq \varXi \left\|u^m(t)-u^m(t_j)\right\|_2 
        + \left|(u^m(t)-u^m(t_j), \partial_t u^m(t))\right| \to 0
    \end{align*}
    as $j \to \infty$, which implies that $I(u^m(t);t)$ is continuous at 
    any $t \in [0,\widehat{T}]$. It follows from the arbitrariness of 
    $\widehat{T}$ that $I(u^m(t);t) \in C([0,T_m))$ for each $m \in \mathbb{N}$.
    To derive \eqref{eqn:integral-identity-base-2} one only need to 
    integrate \eqref{eqn:nehari-identity-base} over $(0,t)$ so that $t \in [0,T_m)$.
\end{proof}

Now we present a global well-posedness result for problem \eqref{eqn:main} under the 
condition that $J(u_0; 0) < \underline{d}$. 
Note that this case is only possible when $\underline{d} > 0$.

\begin{theorem} \label{thm:global-well-posedness-low}
    Let $u_0 \in \mathcal{H}_0^2(\Omega)$. If $J(u_0;0)<\underline{d}$ and 
    $I(u_0;0)>0$, then problem \eqref{eqn:main} possesses a unique 
    global weak solution $u(t)$. That is, for any $T' \in (0,+\infty)$, 
    the unique weak solution $u(t)$ exists on $[0,T']$ and there exists a 
    positive constant $M>0$ independent of $u_0$ such that 
    \begin{align} \label{eqn:a-priori-M}
        \sup_{t \in [0,T']} \|u(t)\|_{(\alpha)}^2 
    + \int_{0}^{T'} \|\partial_t u(\tau)\|_2^2 d \tau \leq M.
    \end{align}
    Moreover, there exists a 
    constant $\delta_1>0$ such that $\|u(t)\|_{H^2(\mathbb{R}^N)} = O(e^{-\delta_1 t})$.
\end{theorem}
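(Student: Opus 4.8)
The plan is to run the potential-well argument on the Galerkin approximations $u^m$ from the proof of Theorem~\ref{thm:local-well-posedness} and then pass to the limit, reserving the decay for the end. Since $u^m(0)\to u_0$ in $\mathcal{H}_0^2(\Omega)$ and $J(\cdot;0),I(\cdot;0)$ are continuous, I would fix $m$ large enough that $J(u^m(0);0)\le b_1<\underline{d}$ and $I(u^m(0);0)>0$, where $b_1:=\tfrac12(J(u_0;0)+\underline{d})$. Because $k'\ge 0$, the energy identity \eqref{eqn:decreasing-integral-um} gives $J(u^m(t);t)\le J(u^m(0);0)<\underline{d}\le d(t)$ throughout the existence interval. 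The invariance $I(u^m(t);t)>0$ then follows by contradiction: at a first time $t_0$ with $I(u^m(t_0);t_0)=0$, Lemma~\ref{lem:0-interior} forces $\|u^m(t_0)\|_{(\alpha)}\ge\varepsilon>0$, so $u^m(t_0)\in\mathcal{N}(t_0)$ and hence $J(u^m(t_0);t_0)\ge d(t_0)\ge\underline{d}$, contradicting the previous bound.

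With $I(u^m(t);t)>0$, the splitting $J(v;t)\ge\tfrac1{p^-}I(v;t)+\tfrac{p^--2}{2p^-}\|v\|_{(\alpha)}^2$ (valid since $p(x)\ge p^-$) yields the bound $\|u^m(t)\|_{(\alpha)}^2\le\tfrac{2p^-}{p^--2}b_1$, uniform in $t$ and $m$. Hence $T_m=+\infty$, and on any $[0,T']$ the compactness argument of Theorem~\ref{thm:local-well-posedness} produces a global weak solution $u$, unique by the same theorem. Weak lower semicontinuity transfers the bound to $u$, giving $\sup_t\|u(t)\|_{(\alpha)}^2\le\tfrac{2p^-}{p^--2}\underline{d}$; combined with the control of $\int_0^{T'}\|\partial_t u\|_2^2$ from the energy inequality \eqref{eqn:energy-inequality}, estimate \eqref{eqn:a-priori-M} follows with a constant $M$ depending only on $\underline{d}$ and not on $u_0$.

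The heart of the decay is a coercivity estimate $I(u^m(t);t)\ge C_*\|u^m(t)\|_{(\alpha)}^2$ with $C_*>0$ uniform in $t$ and $m$, and I expect this to be the main obstacle, precisely because the moving Nehari manifold and the variable exponent rule out an explicit well depth. My plan is to exploit the Nehari projection of Lemma~\ref{lem:nehari-nonempty}: setting $s=\mu^*(u^m(t),t)>1$ (possible since $I(u^m(t);t)>0$), the identity $I(su^m;t)=0$ together with $p(x)\ge p^-$ forces $k(t)\int_\Omega|\nabla u^m|^{p(x)}\,dx\le s^{\,2-p^-}\|u^m\|_{(\alpha)}^2$, whence $I(u^m;t)\ge(1-s^{\,2-p^-})\|u^m\|_{(\alpha)}^2$. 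It then remains to bound $s$ away from $1$: integrating $\tfrac{d}{d\mu}J(\mu u^m;t)=\tfrac1\mu I(\mu u^m;t)\le\mu\|u^m\|_{(\alpha)}^2$ over $[1,s]$ and using $J(su^m;t)\ge d(t)\ge\underline{d}$ against $J(u^m;t)\le b_1$ gives $s^2-1\ge 2(\underline{d}-b_1)/\|u^m\|_{(\alpha)}^2$; the uniform upper bound on $\|u^m\|_{(\alpha)}^2$ then forces $s\ge s_0>1$, so $C_*:=1-s_0^{\,2-p^-}>0$ works.

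With coercivity in hand, the remaining steps are routine. The identity \eqref{eqn:integral-identity-base-2}, combined with $I(u^m;\tau)\ge C_*\|u^m\|_{(\alpha)}^2\ge(C_*/C_P)\|u^m\|_2^2$ (Poincaré's inequality $\|v\|_2^2\le C_P\|v\|_{(\alpha)}^2$) and Grönwall's inequality, gives $\|u^m(t)\|_2^2\le\|u^m(0)\|_2^2e^{-\lambda t}$ with $\lambda=2C_*/C_P$; letting $m\to\infty$ yields $\|u(t)\|_2^2\le\|u_0\|_2^2e^{-\lambda t}$. Applying \eqref{eqn:integral-identity-base-2} on $[t,t+1]$ and passing to the limit gives $\int_t^{t+1}\|u(\tau)\|_{(\alpha)}^2\,d\tau\le\tfrac1{2C_*}\|u(t)\|_2^2\le\tfrac1{2C_*}\|u_0\|_2^2e^{-\lambda t}$. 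Finally, choosing in each $[n,n+1]$ a point $t_n^*$ with $\|u(t_n^*)\|_{(\alpha)}^2$ not exceeding the interval average, and invoking the monotonicity of $J(u(\cdot);\cdot)$ (a consequence of \eqref{eqn:energy-inequality} and uniqueness via time-shift) together with $\|u\|_{(\alpha)}^2\le\tfrac{2p^-}{p^--2}J(u;t)$, I would upgrade this to $J(u;t)\le Ce^{-\lambda t}$, so that $\|u(t)\|_{H^2(\mathbb{R}^N)}^2\sim\|u(t)\|_{(\alpha)}^2\le Ce^{-\lambda t}$ and the claim holds with $\delta_1=\lambda/2$.
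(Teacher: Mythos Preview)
Your proposal follows the paper's strategy closely: potential-well invariance at the Galerkin level, coercivity $I\ge C_*\|\cdot\|_{(\alpha)}^2$ via the Nehari scaling, and passage to the limit. Two tactical differences are worth noting. First, your lower bound on the scaling factor $s$ is obtained by integrating $\tfrac{d}{d\mu}J(\mu u^m;t)\le\mu\|u^m\|_{(\alpha)}^2$, whereas the paper uses the sharper chain $J(\mu^* u^m;t)\le(\mu^*)^2 J(u^m;t)$ directly (valid since $\mu^*>1$ and $p(x)\ge p^-$), yielding $(\mu^*)^2\ge\underline{d}/J(u^m(0);0)$; both give a uniform $s_0>1$. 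Second, for the decay the paper stays at the $u^m$ level throughout, bounds $\int_t^\infty J(u^m;\tau)\,d\tau\le\tfrac1{2\delta_1}J(u^m(t);t)$ and invokes Martinez's lemma to get $J(u^m;t)\le Ce^{-2\delta_1 t}$ before passing to the limit via weak lower semicontinuity; your route through $L^2$-Gr\"onwall and then an ``upgrade'' is also viable but more circuitous.

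There are two genuine soft spots in your write-up. In the invariance step you pass from $I(u^m(t_0);t_0)=0$ to $u^m(t_0)\in\mathcal{N}(t_0)$ via Lemma~\ref{lem:0-interior}, which only excludes $0<\|u^m(t_0)\|_{(\alpha)}<\varepsilon$; you must separately rule out $u^m(t_0)\equiv 0$ (the paper does this by a short continuation argument; alternatively, backward uniqueness for the Galerkin ODE suffices since $0$ is a stationary solution). In the decay step your final line uses $\|u\|_{(\alpha)}^2\le\tfrac{2p^-}{p^--2}J(u;t)$ for the \emph{limit} $u$, which requires $I(u(t);t)\ge 0$; this is not available from weak lower semicontinuity (the $\|\cdot\|_{(\alpha)}^2$ term goes the wrong way). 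Likewise, ``monotonicity of $J(u(\cdot);\cdot)$ via time-shift'' is delicate for this non-autonomous equation. The clean fix is to carry out the entire chain --- $L^2$ decay, choice of $t_n^*$, $J$-monotonicity, and $\|u^m\|_{(\alpha)}^2\le\tfrac{2p^-}{p^--2}J(u^m;t)$ --- at the $u^m$ level (where all these ingredients hold rigorously by \eqref{eqn:decreasing-integral-um} and $I(u^m;t)>0$), obtain $\|u^m(t)\|_{(\alpha)}\le Ce^{-\delta_1 t}$ uniformly in $m$, and only then pass to the limit, exactly as the paper does.
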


\begin{proof}
    The proof will be completed in two steps.

    \textbf{Step 1. Global well-posedness.} 

    Since $J(\cdot;0)$ and $I(\cdot;0)$ are continuous on $\mathcal{H}_0^2(\Omega)$, 
    we find $m_* \in \mathbb{N}$ such that 
    $J(u^m(0);0) < \underline{d}$ and $I(u^m(0);0)>0$ for $m \geq m_*$. From now 
    until the end of this proof, we let $m \geq m_*$. The fact that $J(u^m(t);t)$ 
    is nonincreasing with respect to $t \in [0,T_m)$ implies 
    that $J(u^m(t);t) < \underline{d}$ for $t \in (0,T_m)$. 
    
    Now we prove that for 
    $t \in (0,T_m)$, $u^m(t) \in \mathcal{N}_+(t)$. If not, then 
    \[S \coloneqq 
    \left\{t \in (0,T_m) : I(u^m(t);t) \leq 0 \ \textrm{and} \ u^m(t) \not\equiv 0\right\} 
    \ne \varnothing.\]
    Set $t_0 = \inf S$. It follows from $I(u^m(0);0)>0$ and 
    the continuity of $t \mapsto I(u^m(t);t)$ that $t_0 \in (0,T_m)$. 
    A standard proof by contradiction yields that $I(u^m(t_0);t_0)=0$.
    We assert that $u^m(t_0) \not\equiv 0$. Otherwise, by Lemma \ref{lem:0-interior} and 
    the continuity of $u^m(t)$ in $\mathcal{H}_0^2(\Omega)$ we know that there exists  
    $\delta>0$ depending only on $T'$ such that for every
    $t \in (t_0,t_0+\delta]$, either $u^m(t) \equiv 0$ or $I(u^m(t);t) > 0$.
    If the latter is true, then it follows from 
    \begin{align*}
        I(u^m(t);t) &=  \|u^m(t)\|_{(\alpha)}^2 
        - k(t) \int_{\Omega} |\nabla u^m(t)|^{p(x)} dx \\
        &= 2J(u^m(t);t) - k(t) \int_{\Omega} \frac{p(x)-2}{p(x)} 
        |\nabla u^m(t)|^{p(x)} dx > 0 
    \end{align*}
    that $J(u^m(t);t)>0$ for $t \in (t_0,t_0+\delta]$. However, since 
    $J(u^m(t_0);t_0)=0$ and $J(u^m(t);t)$ is nonincreasing with respect to $t$, 
    this leads to a contradiction. Thus, for $t \in [t_0,t_0+\delta]$, it must hold that
    $u^m(t) \equiv 0$. Since $\delta$ depends only on $T'$, this process can be repeated to 
    conclude that $u^m(t) \equiv 0$ for all $t \in [t_0,T_m)$. However, this 
    contradicts $S \ne \varnothing$. Therefore, $u^m(t_0) \equiv 0$ cannot hold, leaving
    $u^m(t_0) \in \mathcal{N}(t_0)$ as the only possibility. By the definition 
    of $\mathcal{N}(t_0)$, we have
    \[J(u^m(t_0);t_0) \geq d(t_0) \geq \underline{d},\]
    which contradicts $J(u^m(t_0);t_0) < \underline{d}$. 

    The argument above shows that our initial assumption is false. Thus, we have proved 
    that for $t \in (0,T_m)$, $u^m(t) \in \mathcal{N}_+(t)$. It follows from 
    \eqref{eqn:integral-identity-base-2} that
    \begin{equation} \label{eqn:coefficient-norm-control}
        \sum_{j=1}^{m} (g_j^m(t))^2 = \|u^m(t)\|_2^2 = \|u^m(0)\|_2^2
    - 2 \int_{0}^{t} I(u^m(\tau);\tau) d \tau \leq \|u_0\|_2^2
    \end{equation}
    for $t \in (0,T_m)$. Thus, $T_m = T'$. Then, using the following inequality 
    \begin{equation} \label{eqn:energy-norm-control}
        J(u^m(t);t) \geq \frac{p^--2}{2p^-} \|u^m(t)\|_{(\alpha)}^2 + 
    \frac{1}{p^-} I(u^m(t);t) \geq \frac{p^--2}{2p^-} \|u^m(t)\|_{(\alpha)}^2
    \end{equation}
    and \eqref{eqn:decreasing-integral-um}, we have 
    \[\frac{p^--2}{2p^-} \|u^m(t)\|_{(\alpha)}^2 + 
    \int_{0}^{t} \|\partial_t u^m(\tau)\|_2^2 d \tau \leq J(u^m(0);0) < \underline{d}\]
    for $t \in [0,T']$, which implies that 
    \[ \sup_{t \in [0,T']} \|u^m(t)\|_{(\alpha)}^2 
    + \int_{0}^{T'} \|\partial_t u^m(\tau)\|_2^2 d \tau 
    \leq \frac{2p^-\underline{d}}{p^--2} + \underline{d} 
    = \frac{3p^--2}{p^--2}\underline{d}.\]    
    Once again, following the lines of Step 3 in the proof of 
    Theorem \ref{thm:local-well-posedness},
    we deduce that there exists $u \in L^{\infty}(0,T';\mathcal{H}_0^2(\Omega)) \cap 
    H^1(0,T';\mathcal{H}_0(\Omega))$ and a subsequence of $\{u^m\}$ (still denoted by 
    $\{u^m\}$) such that 
    \begin{subequations}  
        \begin{empheq}{alignat=3}
            &u^m \rightharpoonup u \quad \textrm{weakly-*} && \ \textrm{in} 
            \quad  && L^{\infty}(0,T';\mathcal{H}_0^2(\Omega)), \label{eqn:convergence2-1} \\ 
            &u^m \rightharpoonup u \quad \textrm{weakly} && \ \textrm{in}  
            \quad  && H^1(0,T';\mathcal{H}_0(\Omega)), \\
            &u^m \to u \quad \textrm{strongly} && \ \textrm{in} 
            \quad  && C([0,T'];\mathcal{W}_0^{1,p(\cdot)}(\Omega)) \label{eqn:convergence2-3}
        \end{empheq}
    \end{subequations}
    as $m \to \infty$ and $u$ is a weak solution to problem \eqref{eqn:main}.
    Moreover, $u \in C_w([0,T'];\mathcal{H}_0^2(\Omega))$ and there exists a positive 
    constant $M>0$ independent of $m$ and $u_0$ such that 
    \eqref{eqn:a-priori-M} holds. 
    Step 4 in the proof of Theorem \ref{thm:local-well-posedness} shows that 
    the weak solution is always unique within its existence interval. The arbitrariness of
    $T'$ thereby completes the proof of global well-posedness.

    \textbf{Step 2. Decay rate.} 

    For a given $t \geq 0$, we first assume that $u^m(t) \not\equiv 0$. 
    By a following a process similar to that in the proof of Lemma \ref{lem:depth-bound} (iii), 
    we find that there exists $\mu^*(t) \in (1, +\infty)$ such 
    that $\mu^*(t)u^m(t) \in \mathcal{N}(t)$. Thus, 
    \begin{align*}
        0 = I(\mu^*(t)u^m(t);t) & \leq \left(\mu^*(t)\right)^2 \|u^m(t)\|_{(\alpha)}^2 
        - k(t) \left(\mu^*(t)\right)^{p^-} \int_{\Omega} |\nabla u^m(t)|^{p(x)} dx \\
        &= \left(\left(\mu^*(t)\right)^2-\left(\mu^*(t)\right)^{p^-}\right) 
        \|u^m(t)\|_{(\alpha)}^2 + \left(\mu^*(t)\right)^{p^-} I(u^m(t);t),
    \end{align*}
    that is
    \[I(u^m(t);t) \geq \left(1-\left(\mu^*(t)\right)^{2-p^-}\right) \|u^m(t)\|_{(\alpha)}^2.\]   
    On the other hand,
    \begin{align*}
        \underline{d} \leq d(t) & \leq J(\mu^*(t)u^m(t);t) \\
        & \leq \frac{1}{2} \left(\mu^*(t)\right)^2 \|u^m(t)\|_{(\alpha)}^2 
        - k(t) \left(\mu^*(t)\right)^{p^-} \int_{\Omega} \frac{1}{p(x)} 
        |\nabla u^m(t)|^{p(x)} dx \\ 
        & \leq \frac{1}{2} \left(\mu^*(t)\right)^{2} \|u^m(t)\|_{(\alpha)}^2 
        - k(t) \left(\mu^*(t)\right)^2 \int_{\Omega} \frac{1}{p(x)} 
        |\nabla u^m(t)|^{p(x)} dx \\ 
        & = \left(\mu^*(t)\right)^2 J(u^m(t);t) 
        \leq \left(\mu^*(t)\right)^2 J(u^m(0);0), 
    \end{align*}
    which implies that there exist a constant 
    \[\delta_0 \coloneqq \sup_{m \geq m^*} \left(\frac{J(u^m(0);0)}{\underline{d}}\right)
    ^{\frac{p^--2}{2}} \in (0,1)\] 
    such that 
    \[\left(\mu^*(t)\right)^{2-p^-} \leq \left(\frac{J(u^m(0);0)}{\underline{d}}\right)
    ^{\frac{p^--2}{2}} \leq \delta_0, \quad \forall m \geq m_*.\]
    Therefore, we have
    \begin{equation} \label{eqn:nehari-lower}
        I(u^m(t);t) \geq (1-\delta_0) \|u^m(t)\|_{(\alpha)}^2
    \end{equation}
    for $t \geq 0$, and it also holds for the case where $u^m(t) \equiv 0$. 
    
    Using the following inequality 
    \[J(u^m(t);t) \leq \frac{p^+-2}{2p^+} \|u^m(t)\|_{(\alpha)}^2 + 
    \frac{1}{p^+} I(u^m(t);t)\] 
    and \eqref{eqn:nehari-lower}, we obtain that 
    \begin{equation} \label{eqn:energy-nehari-control}
        J(u^m(t);t) \leq \frac{p^+-2 \delta_0}{2p^+(1-\delta_0)} I(u^m(t);t).
    \end{equation}
    Integrating both sides of the above inequality over $(t,t')$, where 
    $t'$ can be chosen sufficiently large, we get 
    \begin{align*}
        \int_{t}^{t'}J(u^m(\tau);\tau) d \tau 
        & \leq \frac{p^+-2 \delta_0}{2p^+(1-\delta_0)} 
        \int_{t}^{t'}I(u^m(\tau);\tau) d \tau \\
        & \leq \frac{p^+-2 \delta_0}{2p^+(1-\delta_0)} \|u^m(t)\|_{2}^2 \\
        & \leq \frac{p^+-2 \delta_0}{2B_2^2p^+(1-\delta_0)} \|u^m(t)\|_{(\alpha)}^2 \\
        & \leq \frac{p^-(p^+-2 \delta_0)}{B_2^2p^+(p^--2)(1-\delta_0)} J(u^m(t);t)
    \end{align*}
    by using \eqref{eqn:integral-identity-base-2}, \eqref{eqn:energy-norm-control} 
    and \eqref{eqn:energy-nehari-control}. Here 
    \[B_2 \coloneqq \inf_{v \in \mathcal{H}_0^2(\Omega) \backslash \{0\}} 
    \frac{\|v\|_{(\alpha)}}{\|v\|_{2}}.\] 
    Furthermore, it follows from $J(u^m(t); t) \leq \underline{d}$ that for any $t'>t$,
    $\int_{t}^{t'} J(u^m(\tau); \tau)  d\tau$ has a uniform positive upper 
    bound. Since $J(u^m(t); t)$ is nonnegative, 
    $\int_{t}^{+\infty} J(u^m(\tau); \tau)  d\tau$ converges and it holds that 
    \begin{equation} \label{eqn:energy-integral-control}
        \int_{t}^{+\infty} J(u^m(\tau); \tau)  d\tau \leq \frac{1}{2 \delta_1} J(u^m(t);t),
    \end{equation}
    where $\delta_1 \coloneqq \frac{B_2^2p^+(p^--2)(1-\delta_0)}{2p^-(p^+-2 \delta_0)}$. 
    From \cite[Lemma 1]{MARTINEZ1999251}, it follows that
    \[J(u^m(\tau); \tau) \leq J(u^m(0); 0) e^{1-2 \delta_1 t} 
    < \underline{d} e^{1-2 \delta_1 t},\]
    combining with \eqref{eqn:energy-norm-control}, we deduce that for any $t \geq 0$, 
    \[\|u^m(t)\|_{(\alpha)} \leq \left(\frac{2ep^-\underline{d}}{p^--2}\right)^{\frac{1}{2}} 
    e^{-\delta_1 t}.\]
    Then, \eqref{eqn:convergence2-1} implies that for any $t > 0$, 
    \[\operatorname*{ess\,sup}_{\tau \in (t,t+1)} \|u(\tau)\|_{_{H^2(\mathbb{R}^N)}}
    \leq \varliminf_{m \to \infty} \operatorname*{ess\,sup}_{\tau \in (t,t+1)} 
    \|u^m(\tau)\|_{_{H^2(\mathbb{R}^N)}} \lesssim e^{-\delta_1 t}.\]
    Therefore, there exists $N_t \subset (t,t+1)$ of measure zero such that  
    $\|u(\tau)\|_{_{H^2(\mathbb{R}^N)}} \lesssim e^{-\delta_1 t}$
    for $\tau \in (t,t+1) \backslash N_t$. Consider a sequence $\{t_j\} 
    \subset (t,t+1) \backslash N_t$ 
    such that $t_j \to t$ as $j \to \infty$. By weak continuity, we have 
    \[\|u(t)\|_{_{H^2(\mathbb{R}^N)}} \leq \varliminf_{j \to \infty} 
    \|u(t_j)\|_{_{H^2(\mathbb{R}^N)}} \lesssim e^{-\delta_1 t}.\]
    The arbitrariness of $t > 0$ concludes the proof.
\end{proof}

We then give a result concerning the finite time blow-up properties of solutions 
to problem \eqref{eqn:main} when $J(u_0;0)<\underline{d}$.

\begin{theorem} \label{thm:blow-up-low-energy}
    Assume that $u_0 \in \mathcal{H}_0^2(\Omega)$. If $J(u_0;0)<\underline{d}$ and 
    $I(u_0;0)<0$, then any solution to problem \eqref{eqn:main} blows up in finite time. 
\end{theorem}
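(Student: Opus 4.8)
The plan is to confine the solution to the unstable set $\mathcal{N}_-(t)$ and then run Levine's concavity argument on an auxiliary functional built from $\|u\|_2^2$. First I would show that $u(t)\in\mathcal{N}_-(t)$ for every $t$ in the maximal existence interval $[0,T_*)$. Discarding the two nonnegative dissipation terms in the energy inequality \eqref{eqn:energy-inequality} (using (H3)) gives the strict bound $J(u(t);t)\le J(u_0;0)<\underline{d}\le d(t)$ for all $t$. Since $t\mapsto I(u(t);t)$ and $t\mapsto\|u(t)\|_{(\alpha)}^2$ are continuous, if the solution ever left $\mathcal{N}_-(t)$ there would be a first time $t_0$ with $I(u(t_0);t_0)=0$; the bound $\operatorname{dist}(0,\mathcal{N}_-(t))>0$ from Lemma \ref{lem:away-from-0}(i) rules out $u(t_0)\equiv 0$, so $u(t_0)\in\mathcal{N}(t_0)$ and $J(u(t_0);t_0)\ge d(t_0)\ge\underline{d}$, contradicting the strict energy bound. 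Hence $I(u(t);t)<0$ throughout, and Lemma \ref{lem:depth-bound}(iii) furnishes the uniform lower bound $\|u(t)\|_{(\alpha)}^2\ge\frac{2p^+}{p^+-2}\,\underline{d}$.

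Next I would set up the functional. Testing the weak formulation \eqref{eqn:main-weak} with $\varphi=u(t)$ gives $\tfrac12\frac{d}{dt}\|u\|_2^2=-I(u(t);t)>0$, so $\|u\|_2$ is strictly increasing. Following Levine, I introduce $M(t)=\int_0^t\|u(\tau)\|_2^2\,d\tau+(T_0-t)\|u_0\|_2^2+\beta(t+\tau_0)^2$ with parameters $T_0,\tau_0,\beta>0$ to be fixed, so that $M'(t)=2\int_0^t(u,u_\tau)\,d\tau+2\beta(t+\tau_0)$ and $M''(t)=-2I(u(t);t)+2\beta$. A Cauchy--Schwarz estimate in $L^2(0,t)\oplus\mathbb{R}$ then yields $\bigl(M'(t)\bigr)^2\le 4M(t)\bigl(\int_0^t\|u_\tau\|_2^2\,d\tau+\beta\bigr)$, which reduces the target concavity inequality $M M''-(1+\eta)(M')^2\ge 0$ to a pointwise lower bound on $M''$.

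To produce that lower bound I would exploit the super-quadratic structure via the identity $-I=-p^- J+\tfrac{p^--2}{2}\|u\|_{(\alpha)}^2+k\int_\Omega\bigl(1-\tfrac{p^-}{p(x)}\bigr)|\nabla u|^{p(x)}\,dx$, replacing $-p^- J$ through the energy inequality by $-p^- J(u_0;0)+p^-\int_0^t\|u_\tau\|_2^2\,d\tau$ plus nonnegative terms. This gives the coefficient $2p^-$ in front of $\int_0^t\|u_\tau\|_2^2$ in $M''$, matching the Cauchy--Schwarz bound with $\eta=\tfrac{p^--2}{2}>0$, provided the remaining constant terms stay nonnegative. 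Choosing $T_0$ large enough that $M'(0)>0$ and applying Levine's concavity lemma \cite{LEVINE1973371} forces $M(t)$, and hence $\|u(t)\|_2$ and $\|u(t)\|_{H^2(\mathbb{R}^N)}$, to become infinite at a finite time; by the blow-up alternative this is exactly finite-time blow-up.

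The hard part will be the differential-inequality step, which is precisely the ``variable exponent'' difficulty flagged in the introduction. Because $p$ is genuinely variable, no exact relation ties $J$ to $I$, and naively inserting the crude bound $\|u\|_{(\alpha)}^2\ge\frac{2p^+}{p^+-2}\underline{d}$ leaves a deficit proportional to $p^+-p^-$ that is not controlled when $J(u_0;0)$ approaches $\underline{d}$. Closing this requires \emph{retaining} the positive term $k\int_\Omega\bigl(1-\tfrac{p^-}{p(x)}\bigr)|\nabla u|^{p(x)}\,dx$, which is supported on $\{p(x)>p^-\}$, rather than discarding it, and carefully balancing this super-quadratic contribution against the energy gap $\underline{d}-J(u_0;0)>0$. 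This is the estimate where the techniques of \cite{KBIRIALAOUI20141723,SUN20183685,GUO202245} are needed. I also note that only the energy \emph{inequality} is used in the direction required above, so the argument applies to weak solutions and does not rely on the conservation law \eqref{eqn:conservation-law}.
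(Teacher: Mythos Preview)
Your invariance step is correct and matches the paper. The divergence is in the blow-up mechanism: the paper does \emph{not} use Levine's concavity here (it reserves that for Theorem~\ref{thm:blow-up-high-energy}), and your concavity route, as you yourself diagnose, does not close under the sole hypothesis $J(u_0;0)<\underline{d}$ when $p^+>p^-$. Concretely, from your identity
\[
-I = -p^-J + \tfrac{p^--2}{2}\|u\|_{(\alpha)}^2 + k\int_\Omega\Bigl(1-\tfrac{p^-}{p(x)}\Bigr)|\nabla u|^{p(x)}\,dx
\]
the only a priori lower bound on $\|u\|_{(\alpha)}^2$ available from $u\in\mathcal{N}_-(t)$ is $\frac{2p^+}{p^+-2}d(t)$, which after the concavity bookkeeping forces the stronger requirement $J(u_0;0)<\frac{(p^--2)p^+}{(p^+-2)p^-}\,\underline{d}$, strictly below $\underline{d}$. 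Retaining the residual term $k\int(1-p^-/p(x))|\nabla u|^{p(x)}$ does not rescue this: nothing prevents $|\nabla u|$ from concentrating on $\{p(x)=p^-\}$, where that integrand vanishes, so the term carries no uniform lower bound. The balancing you allude to is not supplied by the cited references either.

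The paper's argument avoids the second-order concavity machinery entirely and works with the first-order quantity $F_1(t)=\tfrac12\|u(t)\|_2^2$. The key step is to use the \emph{second} inequality in Lemma~\ref{lem:depth-bound}(iii), namely $d(t)\le k(t)\int_\Omega\frac{p(x)-2}{2p(x)}|\nabla u|^{p(x)}\,dx$, inside
\[
F_1'(t) = -I(u(t);t) = k(t)\int_\Omega\frac{p(x)-2}{p(x)}|\nabla u|^{p(x)}\,dx - 2J(u(t);t),
\]
which after bounding $J(u(t);t)\le J(u_0;0)$ and $d(t)\ge\underline{d}$ gives
\[
F_1'(t)\ \ge\ \Bigl(1-\frac{J(u_0;0)}{\underline{d}}\Bigr)\,k(0)\,\frac{p^--2}{p^-}\int_\Omega|\nabla u|^{p(x)}\,dx\ =:\ C_0\int_\Omega|\nabla u|^{p(x)}\,dx.
\]
This factorisation consumes the energy gap $\underline{d}-J(u_0;0)$ exactly, with no $p^+$ versus $p^-$ loss. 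One then splits $\Omega$ into $\{|\nabla u|\ge1\}$ and $\{|\nabla u|<1\}$, applies H\"older and Poincar\'e on each piece, and obtains a scalar inequality of the form $F_1'\ge C_3\,F_1^{p^-/2}$, which integrates to finite-time blow-up of $\|u\|_2$. This is the technique of \cite{KBIRIALAOUI20141723} adapted to the moving potential well, and it is the missing idea in your proposal.
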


\begin{proof}
    Some ideas of this proof comes from Theorem 3.2 in \cite{KBIRIALAOUI20141723}.
    Assume that $u(t)$ is a global solution to problem \eqref{eqn:main}. 
    Firstly we prove that for 
    $t \geq 0$, $u(t) \in \mathcal{N}_-(t)$. If not, the continuity of 
    $t \mapsto I(u(t);t)$ implies that there exists $t_1>0$ such that 
    $I(u(t_1);t_1)=0$ and $I(u(t);t)<0$ for $t \in [0,t_1)$. 
    From the continuity of $t \mapsto \|u(t)\|_{(\alpha)}^2$ and the proof of 
    Lemma \ref{lem:away-from-0}, it follows that 
    \[\|u(t_1)\|_{(\alpha)}^2 = \lim_{t \nearrow t_1} \|u(t_1)\|_{(\alpha)}^2
    \geq \min \left\{ 
        \left(k(t_1)S_{p(\cdot)}^{p^-}\right)^{\frac{2}{2-p^-}}, 
        \left(k(t_1)S_{p(\cdot)}^{p^+}\right)^{\frac{2}{2-p^+}}
    \right\} > 0,\]
    which implies that $u(t_1) \in \mathcal{N}(t_1)$. By the definition of $d(t_1)$ 
    we have $J(u(t_1);t_1) \geq d(t_1) \geq \underline{d} > J(u_0;0)$.
    Since $J(u(t);t)$ is nonincreasing with respect to $t$, 
    this leads to a contradiction. Thus, $u(t) \in \mathcal{N}_-(t)$ for $t \geq 0$.

    Let 
    \[F_1(t) = \frac{1}{2} \|u(t)\|_2^2,\]
    then $F_1(t)$ is absolutely continuous and 
    \[F_1'(t) = (u,u_t) = -I(u(t);t) = k(t) \int_{\Omega} \frac{p(x)-2}{p(x)} 
        |\nabla u(t)|^{p(x)} dx-2J(u(t);t)\]
    for almost all $t \geq 0$. In the case where $\underline{d}=0$, since
    $J(u(t);t) <\underline{d}=0$, we can choose $C_0=k(0)\frac{p^--2}{p^-}$ 
    such that for almost all $t \geq 0$,
    \begin{equation} \label{eqn:differential-modular}
        F_1'(t) \geq C_0 \int_{\Omega} |\nabla u(t)|^{p(x)} dx.
    \end{equation}
    If $\underline{d}>0$, from \eqref{eqn:unstable-depth-upper}, we have 
    \begin{align*}
        F_1'(t) & \geq k(t) \left(1-\frac{J(u(t);t)}{d(t)}\right)
        \int_{\Omega} \frac{p(x)-2}{p(x)} |\nabla u(t)|^{p(x)} dx \\
        & \geq k(0) \frac{p^--2}{p^-} \left(1-\frac{J(u_0;0)}{\underline{d}}\right) 
        \int_{\Omega} |\nabla u(t)|^{p(x)} dx
    \end{align*}
    for a.e. $t \geq 0$. Thus, we can choose 
    \begin{equation*}
        C_0 \coloneqq \left\{
        \begin{array}{ll}
            k(0) \frac{p^--2}{p^-} \left(1-\frac{J(u_0;0)}{\underline{d}}\right), 
             & \textrm{if} \quad \underline{d} > 0,  \\ 
            k(0) \frac{p^--2}{p^-}, & \textrm{if} \quad \underline{d} = 0,  
        \end{array}
        \right. 
    \end{equation*}
    such that \eqref{eqn:differential-modular} holds for a.e. $t \geq 0$.
    Define the sets $\Omega_+=\{x \in \Omega: |\nabla u| \geq 1\}$ and 
    $\Omega_-=\{x \in \Omega: |\nabla u| < 1\}$. Then for almost all $t \geq 0$, 
    \[F_1'(t) \geq \int_{\Omega_+} |\nabla u(t)|^{p^-} dx 
    \geq C_0 |\Omega|^{\frac{2-p^-}{2}} \left(\int_{\Omega_+} 
    |\nabla u(t)|^{2} dx\right)^{\frac{p^-}{2}}\] 
    and 
    \[F_1'(t) \geq \int_{\Omega_-} |\nabla u(t)|^{p^+} dx 
    \geq C_0 |\Omega|^{\frac{2-p^+}{2}} \left(\int_{\Omega_-} 
    |\nabla u(t)|^{2} dx\right)^{\frac{p^+}{2}},\] 
    which implies that there exists a positive constant
    $C_1 \coloneqq \lambda_1 \min \left\{C_0^{\frac{2}{p^+}} |\Omega|^{\frac{2-p^+}{p^+}}, 
    C_0^{\frac{2}{p^-}} |\Omega|^{\frac{2-p^-}{p^-}} \right\}$ 
    such that 
    \[(F_1'(t))^{\frac{2}{p^-}} 
    \left(1 + (F_1'(t))^{\frac{2}{p^+} - \frac{2}{p^-}}\right)=  
    (F_1'(t))^{\frac{2}{p^-}}   + (F_1'(t))^{\frac{2}{p^+}}\geq C_1 F_1(t)\]
    for a.e. $t \geq 0$. For almost all $t \geq 0$, $F_1'(t) = -I(u(t);t) >0$ holds,  
    which implies that 
    \[F_1(t) \geq F_1(0) + \int_{0}^{t} F_1'(\tau) d \tau \geq F_1(0) > 0\] 
    for all $t \geq 0$. 
    Therefore, for a.e. $t \geq 0$, $F_1'(t) \geq C_2 \coloneqq \min \left\{
        \left(\frac{C_1}{2}\|u_0\|_2^2\right)^{\frac{p^+}{2}},
        \left(\frac{C_1}{2}\|u_0\|_2^2\right)^{\frac{p^-}{2}}\right\}.$ Set 
    \[C_3 \coloneqq \left(\frac{C_1}{1+C_2^{\frac{2}{p^+} - \frac{2}{p^-}}}\right)
    ^{\frac{2}{p^-}},\] 
    then for a.e. $t \geq 0$, 
    \begin{equation} \label{eqn:differential-inequality}
        F_1'(t) \geq C_3 (F_1(t))^{\frac{p^-}{2}}.
    \end{equation}
    Integrating \eqref{eqn:differential-inequality} over $(0,t)$, we deduce that for 
    all $t \geq 0$,
    \[\|u(t)\|_2^2 \geq \frac{2}{\left[\left(\frac{1}{2}\|u_0\|_2^2\right)
    ^{1-\frac{p^-}{2}} - \frac{p^--2}{2}C_3 t\right]^{\frac{2}{p^--2}}}.\]
    Hence, 
    \[\lim_{t \to \frac{2}{C_3(p^--2)} \left(\frac{1}{2}\|u_0\|_2^2\right)
    ^{1-\frac{p^-}{2}}} \|u(t)\|_2^2 = + \infty,\]
    which contradicts the assumption that $u(t)$ is a global solution. 
    Therefore, $u(t)$ blows up in finite time and its maximal existence interval
    $T_*$ satisfies 
    \[T_* \leq \frac{2}{C_3(p^--2)} \left(\frac{1}{2}\|u_0\|_2^2\right)
    ^{1-\frac{p^-}{2}}.\]
    This fact concludes the proof.
\end{proof}

\subsection{The case \texorpdfstring{$J(u_0; 0) = \underline{d}$}{Lg}}

By combining Theorem \ref{thm:global-well-posedness-low} with an approximation argument, 
we obtain a global existence result for the case $J(u_0; 0) = \underline{d}$.

\begin{theorem}
    Let $u_0 \in \mathcal{H}_0^2(\Omega)$. If $J(u_0;0)=\underline{d}$ and 
    $I(u_0;0)>0$, then problem \eqref{eqn:main} possesses a unique 
    global weak solution $u(t)$.
\end{theorem}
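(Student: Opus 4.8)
The plan is to treat the critical case $J(u_0;0)=\underline{d}$ by a scaling approximation that reduces it to the subcritical regime already settled in Theorem \ref{thm:global-well-posedness-low}. Concretely, for $\lambda \in (0,1)$ I would consider the rescaled initial data $\lambda u_0$, show that $J(\lambda u_0;0)<\underline{d}$ and $I(\lambda u_0;0)>0$, apply Theorem \ref{thm:global-well-posedness-low} to obtain a global weak solution $u_\lambda$ for each such datum, and then pass to the limit $\lambda \nearrow 1$ using the uniform a priori bound to recover a global weak solution with initial datum $u_0$.

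First I would verify the two inequalities for $\lambda u_0$. Recall the auxiliary function $h(\mu)=I(\mu u_0;0)$ from the proof of Lemma \ref{lem:nehari-nonempty}. Since $p^->2$, the quotient $h(\mu)/\mu^2 = \|u_0\|_{(\alpha)}^2 - k(0)\int_{\Omega} \mu^{p(x)-2}|\nabla u_0|^{p(x)}\,dx$ is strictly decreasing on $(0,1]$; hence $I(u_0;0)>0$ forces $h(\lambda)=I(\lambda u_0;0)>0$ for every $\lambda \in (0,1]$. Moreover, writing $j(\lambda)=J(\lambda u_0;0)$, a direct computation gives $j'(\lambda)=\lambda^{-1}h(\lambda)>0$ on $(0,1]$, so $j$ is strictly increasing and $J(\lambda u_0;0)<J(u_0;0)=\underline{d}$ for every $\lambda\in(0,1)$. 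This places each $\lambda u_0$ squarely in the hypotheses of Theorem \ref{thm:global-well-posedness-low}.

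Fixing any $T'\in(0,+\infty)$, Theorem \ref{thm:global-well-posedness-low} then yields, for each $\lambda\in(0,1)$, a unique global weak solution $u_\lambda$ satisfying the a priori estimate \eqref{eqn:a-priori-M} with a constant $M$ that is independent of the initial data, hence independent of $\lambda$. Choosing a sequence $\lambda_n \nearrow 1$, the uniform bound on $\{u_{\lambda_n}\}$ in $L^{\infty}(0,T';\mathcal{H}_0^2(\Omega))\cap H^1(0,T';\mathcal{H}_0(\Omega))$ lets me invoke the Aubin-Lions-Simon compactness argument exactly as in Step 3 of the proof of Theorem \ref{thm:local-well-posedness}, extracting a subsequence that converges weakly-$*$ in $L^{\infty}(0,T';\mathcal{H}_0^2(\Omega))$, weakly in $H^1(0,T';\mathcal{H}_0(\Omega))$, and strongly in $C([0,T'];\mathcal{W}_0^{1,p(\cdot)}(\Omega))$ to a limit $u$. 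The strong convergence of the gradients together with Tartar's type inequality \eqref{eqn:tartar-inequality} handles the passage to the limit in the nonlinear term, so that $u$ satisfies \eqref{eqn:main-weak}; and because $\lambda_n u_0\to u_0$ strongly in $\mathcal{H}_0^2(\Omega)$ while $u_{\lambda_n}\to u$ in $C([0,T'];\mathcal{W}_0^{1,p(\cdot)}(\Omega))$, the initial condition $u(0)=u_0$ is preserved. Since $T'$ is arbitrary, $u$ is global, and uniqueness is inherited from Step 4 of Theorem \ref{thm:local-well-posedness}.

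I expect the main obstacle to be not the limit passage itself---the uniform estimate is essentially handed to us by the $u_0$-independence of $M$ in Theorem \ref{thm:global-well-posedness-low}---but the careful bookkeeping required to confirm that the limit $u$ genuinely satisfies every clause of Definition \ref{def:main-weak}, in particular reconciling the various modes of convergence so as to justify both the weak formulation and the recovery of the initial datum in the correct space. The monotonicity facts that $h(\mu)/\mu^2$ is decreasing and $j'(\lambda)=\lambda^{-1}h(\lambda)$ are the only genuinely case-specific computations, and they are elementary once $p^->2$ and hypothesis (H3) are in force.
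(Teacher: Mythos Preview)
Your proposal is correct and follows essentially the same approach as the paper: the paper takes $\mu_n=\frac{n}{n+1}$, verifies $I(\mu_n u_0;0)>0$ and $J(\mu_n u_0;0)<\underline{d}$ via the identity $j_0'(\mu)=\mu^{-1}I(\mu u_0;0)$, applies Theorem~\ref{thm:global-well-posedness-low} with its $u_0$-independent bound \eqref{eqn:a-priori-M}, and passes to the limit exactly as you describe. The only cosmetic difference is that the paper bounds $I(\mu u_0;0)$ from below by $\mu^{p^-}I(u_0;0)+(\mu^2-\mu^{p^-})\|u_0\|_{(\alpha)}^2$ rather than using your monotonicity-of-the-quotient argument, but both are elementary and equivalent.
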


\begin{proof} 
    Set $\mu_n = \frac{n}{n+1}$, $u_{0n}(x)=\mu_n u_0(x)$, $n \in \mathbb{N}$. 
    Consider the following 
    problem 
    \begin{equation}
        \left\{
        \begin{array}{ll}
            u_t + \mathcal{L}_{\alpha} u =
            \operatorname{div} \left(F(t, x, \nabla u)\right), 
             & x \in \Omega, \ t > 0,  \\ 
            u(0,x) = u_{0n}(x), & x \in \Omega,  \\
            u = 0, & x \in \mathbb{R}^N \backslash \Omega. 
        \end{array}
        \right. \label{eqn:main-critical}
    \end{equation}
    When $\mu \in (0,1)$, it follows from $I(u_0;0)>0$ that 
    \begin{align*}
        I(\mu u_0;0) &\geq \mu^2 \|u_0\|_{(\alpha)}^2 
        - k(0) \mu^{p^-} \int_{\Omega} |\nabla u_0|^{p(x)} dx \\
        &= \mu^{p^-} I(u_0;0) +\left(\mu^2-\mu^{p^-}\right) \|u_0\|_{(\alpha)}^2 > 0. 
    \end{align*}
    Set $j_0(\mu)=J(\mu u_0;0)$. Then for $\mu \in (0,1)$, it holds that 
    \begin{equation*}
        \frac{d}{d \mu}j_0(\mu) = \mu \|u_0\|_{(\alpha)}^2 
        - k(0) \int_{\Omega} \mu^{p(x)-1} |\nabla u_0|^{p(x)} dx 
        = \frac{1}{\mu} I(\mu u_0;0) > 0,
    \end{equation*}
    that is, $\mu \mapsto J(\mu u_0;0)$ is strictly increasing on $(0,1)$. Then from 
    $\mu_n \in (0,1)$ we obtain that $I(u_{0n};0)>0$ and 
    $J(u_{0n};0)<J(u_0;0)=\underline{d}$ for any $n \in \mathbb{N}$. By Theorem 
    \ref{thm:global-well-posedness-low} we know that for each $n$ and $T'>0$, problem
    \eqref{eqn:main-critical} possesses a unique weak solution $u_n$ on $[0,T']$ 
    satisfying 
    \begin{equation}
        (\partial_t u_n, \varphi) + (\Delta u_n, \Delta \varphi) +
        \alpha ((-\Delta)^{s} u_n, (-\Delta)^{s} \varphi) + 
        \int_{\Omega} F(t, x, \nabla u_n) \cdot \nabla \varphi d x
        = 0
        \label{eqn:main-weak-m}
    \end{equation}
    for any $\varphi \in \mathcal{H}_0^2(\Omega)$ and almost all $t \in (0,T')$. 
    Besides, there exists a positive constant $M>0$ independent of $n$ such that 
    \begin{align} \label{eqn:a-priori-m-M}
        \sup_{t \in [0,T']} \|u_n(t)\|_{(\alpha)}^2 
    + \int_{0}^{T'} \|\partial_t u_n(\tau)\|_2^2 d \tau \leq M.
    \end{align}
    Therefore, there exists $u \in L^{\infty}(0,T';\mathcal{H}_0^2(\Omega)) \cap 
    H^1(0,T';\mathcal{H}_0(\Omega))$ and a subsequence of $\{u_n\}$ (still denoted by 
    $\{u_n\}$) such that 
    \begin{subequations}  
        \begin{empheq}{alignat=3}
            &u_n \rightharpoonup u \quad \textrm{weakly-*} && \ \textrm{in} 
            \quad  && L^{\infty}(0,T';\mathcal{H}_0^2(\Omega)), \notag \\ 
            &u_n \rightharpoonup u \quad \textrm{weakly} && \ \textrm{in}  
            \quad  && H^1(0,T';\mathcal{H}_0(\Omega)), \notag \\
            &u_n \to u \quad \textrm{strongly} && \ \textrm{in} 
            \quad  && C([0,T'];\mathcal{W}_0^{1,p(\cdot)}(\Omega))  \notag 
        \end{empheq}
    \end{subequations}
    as $n \to \infty$. Letting $n \to \infty$ in \eqref{eqn:main-weak-m} we 
    get 
    \begin{equation*}
        (u_t, \varphi) + (\Delta u, \Delta \varphi) +
        \alpha ((-\Delta)^{s} u, (-\Delta)^{s} \varphi) + 
        \int_{\Omega} F(t, x, \nabla u) \cdot \nabla \varphi d x
        = 0
    \end{equation*}
    for any $\varphi \in \mathcal{H}_0^2(\Omega)$ and almost all $t \in (0,T')$, 
    which implies that $u(t)$ is the weak solution to problem \eqref{eqn:main-weak} 
    on $[0,T']$. The arbitrariness of $T'$ thereby completes the proof. 
\end{proof}

We deduce the following finite time blow-up result for $J(u_0; 0) = \underline{d}$ from 
Theorem \ref{thm:blow-up-low-energy}.

\begin{theorem}
    Assume that $u_0 \in \mathcal{H}_0^2(\Omega)$. If $J(u_0;0)=\underline{d}$ and 
    $I(u_0;0)<0$, then any solution to problem \eqref{eqn:main} blows up in finite time.
\end{theorem}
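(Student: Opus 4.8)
The plan is to reduce the critical case to the subcritical blow-up result, Theorem \ref{thm:blow-up-low-energy}, by letting the flow run for a short time and then translating the time variable. Let $u(t)$ be a solution with $J(u_0;0)=\underline{d}$ and $I(u_0;0)<0$. First I would invoke the continuity of $t\mapsto I(u(t);t)$ (guaranteed by the Remark following Definition \ref{def:strong}) together with $I(u_0;0)<0$ to fix a small $t_0>0$ such that $I(u(t);t)<0$ for all $t\in[0,t_0]$; in particular $u(t)\in\mathcal{N}_-(t)$ throughout this interval.

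Next I would produce a strict energy drop at $t_0$. Testing the weak formulation \eqref{eqn:main-weak} with $\varphi=u$ gives, for a.e.\ $t$, the identity $\tfrac12\tfrac{d}{dt}\|u(t)\|_2^2=(u,u_t)=-I(u(t);t)$, so that $\tfrac12\|u(t_0)\|_2^2-\tfrac12\|u_0\|_2^2=-\int_0^{t_0}I(u(\tau);\tau)\,d\tau>0$ since the integrand is strictly negative. Hence $u$ is not constant on $[0,t_0]$, so $\partial_t u\not\equiv0$ and $\int_0^{t_0}\|\partial_t u(\tau)\|_2^2\,d\tau>0$. Feeding this into the conservation law \eqref{eqn:conservation-law} and using $k'\ge0$ yields
\[
J(u(t_0);t_0)=\underline{d}-\int_0^{t_0}\|\partial_t u\|_2^2\,d\tau-\int_0^{t_0}\int_\Omega\frac{k'(\tau)}{p(x)}|\nabla u|^{p(x)}\,dx\,d\tau<\underline{d}.
\]

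Now I would time-translate: set $v(s)=u(t_0+s)$ and $\tilde{k}(s)=k(t_0+s)$. Then $v$ solves problem \eqref{eqn:main} with $k$ replaced by $\tilde{k}$, and $\tilde{k}$ still satisfies (H3) because $\tilde{k}(0)=k(t_0)>0$ and $\tilde{k}'\ge0$; subtracting \eqref{eqn:conservation-law} evaluated at $t_0$ shows that $v$ inherits the corresponding conservation law and so is a strong solution of the shifted problem. Writing $\tilde{J},\tilde{I},\tilde{d}$ for the associated energy, Nehari functional and well depth, one has $\tilde{d}(s)=d(t_0+s)$ and therefore $\underline{\tilde{d}}=\inf_{s\ge0}d(t_0+s)=\inf_{t\ge t_0}d(t)\ge\underline{d}$. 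Combining with the previous step gives $\tilde{J}(v(0);0)=J(u(t_0);t_0)<\underline{d}\le\underline{\tilde{d}}$ and $\tilde{I}(v(0);0)=I(u(t_0);t_0)<0$, which are precisely the hypotheses of Theorem \ref{thm:blow-up-low-energy} for the shifted problem. That theorem then forces $v$, and hence $u$, to blow up in finite time.

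The main obstacle I anticipate is the bookkeeping exactly at the critical threshold. One must secure a genuinely \emph{strict} energy drop $J(u(t_0);t_0)<\underline{d}$ at some positive time, which is why the argument ruling out $\partial_t u\equiv0$ is essential and cannot be skipped. One must then confirm that the ``moving'' potential well depth does not decrease below $\underline{d}$ after the shift, i.e.\ $\underline{\tilde{d}}\ge\underline{d}$, so that the strict subcritical inequality is preserved and Theorem \ref{thm:blow-up-low-energy} genuinely applies to the translated data.
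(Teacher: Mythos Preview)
Your proposal is correct and follows essentially the same approach as the paper: use continuity of $I(u(t);t)$ and the identity $(u,u_t)=-I(u(t);t)$ to force $\partial_t u\not\equiv 0$ on a short interval, deduce a strict energy drop $J(u(t_0);t_0)<\underline{d}$ from the conservation law, and then reduce to the subcritical blow-up theorem. The only cosmetic difference is that the paper re-runs the differential-inequality argument of Theorem~\ref{thm:blow-up-low-energy} from time $t_1$ with an adjusted constant $C_0$, whereas you invoke that theorem as a black box via time-translation; your observation that $\underline{\tilde d}=\inf_{t\ge t_0}d(t)\ge\underline{d}$ is exactly what makes this shortcut work.
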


\begin{proof}
    Assume that $u(t)$ is a global solution to problem \eqref{eqn:main}. Since 
    $J(u_0;0)=\underline{d} > 0$ and $I(u_0;0)<0$,
    the continuity of $J(u(t);t)$ and $I(u(t);t)$ with respect to $t$ implies that
    there exists $t_1 > 0$ such that $J(u(t);t)>0$ and $I(u(t);t)<0$ for 
    $t \in [0,t_1)$. 
    From $(u,u_t) = -I(u(t);t) > 0$ for a.e. $t \in [0,t_1]$, we have that 
    $u_t$ is not identically zero on $[0,t_1]$. Hence, 
    \[J(u(t_1);t_1) \leq J(u_0;0) - \int_{0}^{t_1} \|\partial_t u(\tau)\|_2^2 d \tau 
    < \underline{d}.\]
    Working as in the proof of Theorem \ref{thm:blow-up-low-energy} we get 
    $u(t) \in \mathcal{N}_-(t)$ for $t \geq t_1$. Furthermore, we can choose 
    \[C_0=k(0)\frac{p^--2}{p^-} \left(1-\frac{J(u(t_1);t_1)}{\underline{d}}\right)\]
    such that for almost all $t \geq t_1$, 
    \begin{align*}
        F_1'(t) & \geq k(t) \left(1-\frac{J(u(t);t)}{d(t)}\right)
        \int_{\Omega} \frac{p(x)-2}{p(x)} |\nabla u(t)|^{p(x)} dx \\
        & \geq k(t) \left(1-\frac{J(u(t_1);t_1)}{\underline{d}}\right)
        \int_{\Omega} \frac{p(x)-2}{p(x)} |\nabla u(t)|^{p(x)} dx \\
        & = C_0
        \int_{\Omega} |\nabla u(t)|^{p(x)} dx.
    \end{align*} 
    The remaining part of the proof follows the same arguments as that of
    Theorem \ref{thm:blow-up-low-energy}, and thus we omit the details here.
\end{proof}

\section{High initial energy case \texorpdfstring{$J(u_0; 0) > 0$}{Lg}} \label{sec5}

Let us recall the properties of $\lambdabar_{\varsigma}(t)$ presented in 
Section \ref{sec2}. Following the ideas of Gazzola et al. \cite{GAZZOLA2005961}, 
we first establish a sufficient condition for the global existence of solutions 
when $J(u_0; 0) > \overline{d}$.

\begin{theorem} \label{thm:global-well-posedness-high}
    Let $k(t)$ be bounded and $u_0 \in \mathcal{N}_+(0)$. Assume that  
    $\overline{d}<J(u_0;0) \leq \widetilde{d}$ with 
    $\widetilde{d} \in (\overline{d},+\infty)$. Then, there exists 
    $\mathfrak{L} : \mathcal{H}_0^2(\Omega) \to \mathbb{R}_{>0}$
    such that if $\|u_0\|_2^2 \leq \mathfrak{L}(u_0)$ holds, 
    problem \eqref{eqn:main} possesses a unique 
    global weak solution $u(t)$.
\end{theorem}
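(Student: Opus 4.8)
The plan is to derive a uniform-in-time a priori bound on $\|u(t)\|_{(\alpha)}$ by showing that $\mathcal{N}_+(t)$ is invariant along the evolution, and then to close the argument with the continuation criterion. I would take
\[\mathfrak{L}(u_0) := \underline{\lambdabar}_{J(u_0;0)},\]
which is a genuine function of $u_0$ through the energy level $J(u_0;0)$. Since $k(t)$ is bounded and $J(u_0;0) > \overline{d}$, Lemma~\ref{lem:2-norm-inf-lower}(i) gives $\underline{\lambdabar}_{J(u_0;0)} > 0$, so $\mathfrak{L}(u_0) \in \mathbb{R}_{>0}$; the role of the upper bound $\widetilde{d}$ is only to keep this energy level finite. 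Let $u$ be the unique weak solution on its maximal interval $[0,T_*)$ furnished by Theorem~\ref{thm:local-well-posedness} and the blow-up alternative. Throughout I use that $I(u(\cdot);\cdot)$ and $\|u(\cdot)\|_{(\alpha)}^2$ are continuous on $[0,T_*)$, that testing \eqref{eqn:main-weak} with $\varphi = u(t)$ gives $\tfrac{1}{2}\tfrac{d}{dt}\|u(t)\|_2^2 = -I(u(t);t)$ for a.e. $t$, and that the energy inequality yields $J(u(t);t) \le J(u_0;0)$ for all $t$.

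The heart of the proof is the invariance $u(t) \in \mathcal{N}_+(t)$ for every $t \in [0,T_*)$, which I would establish exactly as in Step~1 of Theorem~\ref{thm:global-well-posedness-low}. Arguing by contradiction, set
\[t_0 := \inf\{t \in (0,T_*) : I(u(t);t) \le 0 \text{ and } u(t) \not\equiv 0\}.\]
Since $I(u_0;0) > 0$ and $t \mapsto I(u(t);t)$ is continuous, $t_0 > 0$ and $I(u(t_0);t_0) = 0$; the uniform (on compact time intervals) interiority of $0$ in $\mathcal{N}_+(t)$ from Lemma~\ref{lem:0-interior} rules out $u(t_0) \equiv 0$, so $u(t_0) \in \mathcal{N}(t_0)$. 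The energy inequality gives $J(u(t_0);t_0) \le J(u_0;0)$, hence $u(t_0) \in \mathcal{N}_{J(u_0;0)}(t_0)$ and consequently
\[\|u(t_0)\|_2^2 \ge \lambdabar_{J(u_0;0)}(t_0) \ge \underline{\lambdabar}_{J(u_0;0)} = \mathfrak{L}(u_0).\]
On the other hand $I(u(t);t) \ge 0$ on $[0,t_0)$, so $\|u(t)\|_2^2$ is nonincreasing there and $\|u(t_0)\|_2^2 \le \|u_0\|_2^2 \le \mathfrak{L}(u_0)$. The two displays force $\|u(t_0)\|_2^2 = \|u_0\|_2^2$, so $\|u(\cdot)\|_2^2$ is constant on $[0,t_0]$, whence $I(u(t);t) \equiv 0$ there and in particular $I(u_0;0) = 0$, contradicting $I(u_0;0) > 0$.

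Once $I(u(t);t) \ge 0$ on $[0,T_*)$ is known, the elementary bound
\[J(u(t);t) \ge \frac{p^- - 2}{2p^-}\|u(t)\|_{(\alpha)}^2 + \frac{1}{p^-} I(u(t);t) \ge \frac{p^- - 2}{2p^-}\|u(t)\|_{(\alpha)}^2\]
together with $J(u(t);t) \le J(u_0;0) \le \widetilde{d}$ yields $\|u(t)\|_{(\alpha)}^2 \le \frac{2p^-}{p^- - 2}\widetilde{d}$ uniformly on $[0,T_*)$. Because $\|\cdot\|_{(\alpha)} \sim \|\cdot\|_{H^2(\mathbb{R}^N)}$, the $H^2$-norm stays bounded, so the blow-up alternative forces $T_* = +\infty$, and uniqueness is inherited from Theorem~\ref{thm:local-well-posedness}. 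I expect the invariance step to be the main obstacle: because the Nehari manifold ``moves'' in $t$, no fixed sign condition is available, and the very positivity $\underline{\lambdabar}_{J(u_0;0)} > 0$ that closes the contradiction rests on both the boundedness of $k$ and the high-energy hypothesis $J(u_0;0) > \overline{d}$ via Lemma~\ref{lem:2-norm-inf-lower}(i). One must also take care to exclude $u(t_0) \equiv 0$ and to justify the continuity of $I(u(\cdot);\cdot)$ for the weak solution; if one prefers, the whole argument can instead be carried out on the Galerkin approximations $u^m$—where the energy identity and the $\tfrac{1}{2}\|u^m\|_2^2$-balance \eqref{eqn:integral-identity-base-2} hold with equality—and then passed to the limit as in Theorem~\ref{thm:global-well-posedness-low}.
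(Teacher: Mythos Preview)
Your strategy is exactly the paper's: prove invariance of $\mathcal{N}_+(t)$ by the Gazzola--Weth obstruction $\|u(t_0)\|_2^2 \ge \underline{\lambdabar}_\varsigma$ versus the monotone decrease of $\|u(t)\|_2^2$, then close with the energy--Nehari lower bound. The paper, however, runs the entire argument on the Galerkin approximations $u^m$ (as you mention parenthetically at the end), and this is not optional. Your primary route on the weak solution has a real gap: for weak solutions one only has $u \in C_w([0,T];\mathcal{H}_0^2(\Omega))$, so $\|u(t)\|_{(\alpha)}^2$ is merely lower semicontinuous in $t$, not continuous, and hence $t\mapsto I(u(t);t)$ is only l.s.c.\ as well. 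The continuity statement you invoke is established in the paper (Remark 2.13(i)) \emph{only for strong solutions}, precisely because the conservation law yields continuity of $J(u(t);t)$. Without it, neither the definition of $t_0$ via continuity nor the step $J(u(t_0);t_0)\le J(u_0;0)$ (the energy inequality holds only a.e.) is justified.

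If you do switch to Galerkin, one more adjustment is needed: your choice $\mathfrak{L}(u_0)=\underline{\lambdabar}_{J(u_0;0)}$ does not close the contradiction there. At the approximate level one only knows $J(u^m(t_0);t_0)\le J(u^m(0);0)$, and since $J(u^m(0);0)$ may exceed $J(u_0;0)$, the conclusion $u^m(t_0)\in\mathcal{N}_{J(u_0;0)}(t_0)$ can fail. The paper fixes this by setting $K_3(u_0):=\sup_{m\ge m_{**}}J(u^m(0);0)$ and taking $\mathfrak{L}(u_0):=\underline{\lambdabar}_{K_3(u_0)}$; then $u^m(t_0)\in\mathcal{N}_{K_3}(t_0)$ gives $\|u^m(t_0)\|_2^2\ge \underline{\lambdabar}_{K_3}$, while the strict energy identity on $u^m$ forces $\|u^m(t_0)\|_2^2<\|u^m(0)\|_2^2\le \|u_0\|_2^2\le \underline{\lambdabar}_{K_3}$, a clean contradiction that avoids your equality-case detour entirely.
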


\begin{proof}
    Recall the approximate solution $u^m$ defined in the proof of 
    Theorem \ref{thm:local-well-posedness} and its maximal existence interval $T_m$. 
    We find $m_{**}=m_{**}(u_0) \in \mathbb{N}$ 
    such that $J(u^m(0);0) >\overline{d}$, $I(u^m(0);0)>0$ for $m \geq m_{**}$. 
    From now until the end of this proof, we let $m \geq m_{**}$. Setting
    \[K_3 = K_3(u_0) \coloneqq \sup_{m \geq m_{**}} J(u^m(0);0),\]
    we define 
    $\mathfrak{L}(u_0) \coloneqq \underline{\lambdabar}_{K_3(u_0)}$. 
    The boundedness of $k(t)$ ensures that $\mathfrak{L}(u_0)>0$. Assume that
    $\|u_0\|_2^2 \leq \mathfrak{L}(u_0)$ holds, then
    \[\|u^m(0)\|_2^2 \leq \|u_0\|_2^2 \leq \underline{\lambdabar}_{K_3}.\]
    We claim that for 
    $t \in (0,T_m)$, $u^m(t) \in \mathcal{N}_+(t)$. Otherwise, by a similar argument as in 
    the proof of Theorem \ref{thm:global-well-posedness-low}, we know that 
    there exists $t_0 \in (0,T_m)$ such that $u^m(t) \in \mathcal{N}_+(t)$ 
    for $t \in [0,t_0)$ and $u^m(t_0) \in \mathcal{N}(t)$. 
    Then \eqref{eqn:nehari-identity-base} implies 
    that $\partial_t u^m \not \equiv 0$ for $(t,x) \in (0,t_0) \times \mathbb{R}^N$ and 
    \begin{equation} \label{eqn:2-norm-upper}
        \|u^m(t_0)\|_2^2 < \|u^m(0)\|_2^2 \leq \underline{\lambdabar}_{K_3}.
    \end{equation}
    However, from \eqref{eqn:decreasing-integral-um}, it follows that 
    $J(u^m(t_0);t_0) < J(u^m(0);0) \leq K_3$, which implies 
    that $u^m(t_0) \leq \mathcal{N}_{K_3}(t_0)$. This yields that 
    \[\|u^m(t_0)\|_2^2 \geq \lambdabar_{K_3}(t_0) \geq \underline{\lambdabar}_{K_3},\]
    which contradicts \eqref{eqn:2-norm-upper}. Therefore, 
    for $t \in [0,T_m)$, $u^m(t) \in \mathcal{N}_+(t)$. This 
    implies that \eqref{eqn:coefficient-norm-control} holds for $t \in (0,T_m)$ 
    so that $T_m = T'$. Similar to the proof of 
    Theorem \ref{thm:global-well-posedness-low}, we can obtain the following estimate: 
    \[\sup_{t \in [0,T']} \|u^m(t)\|_{(\alpha)}^2 
    + \int_{0}^{T'} \|\partial_t u^m(\tau)\|_2^2 d \tau 
    \leq \frac{3p^--2}{p^--2}\widetilde{d},\]   
    and once again derive \eqref{eqn:convergence2-1}-\eqref{eqn:convergence2-3}. 
    The remaining proof is the same as that of 
    Theorem \ref{thm:global-well-posedness-low} and hence we omit it.
\end{proof}

Further assumptions on $p$ lead to a more explicit sufficient condition, 
as shown in the following corollary.

\begin{corollary}
    Let $p^+ \leq \frac{2(N+4)}{N+2}$ and $k(t)$ be bounded. 
    Let $u_0 \in \mathcal{H}_0^2(\Omega)$. Assume that  
    $\overline{d}<J(u_0;0) \leq \widetilde{d}$ with 
    $\widetilde{d} \in (\overline{d},+\infty)$. If 
    $\|u_0\|_2^2 < \underline{\lambdabar}_{\infty}$, 
    then problem \eqref{eqn:main} possesses a unique 
    global weak solution $u(t)$.
\end{corollary}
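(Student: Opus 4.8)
The plan is to deduce this corollary directly from Theorem \ref{thm:global-well-posedness-high} by verifying its two nontrivial hypotheses—namely $u_0 \in \mathcal{N}_+(0)$ and $\|u_0\|_2^2 \leq \mathfrak{L}(u_0)$—from the single cleaner assumption $\|u_0\|_2^2 < \underline{\lambdabar}_{\infty}$. First I would invoke Lemma \ref{lem:2-norm-inf-lower}(ii): since $p^+ \leq \frac{2(N+4)}{N+2}$ and $k(t)$ is bounded, we have $\underline{\lambdabar}_{\infty} > 0$, so the hypothesis $\|u_0\|_2^2 < \underline{\lambdabar}_{\infty}$ is not vacuous. I would also record that $J(u_0;0) > \overline{d} \geq 0 = J(0;0)$ forces $u_0 \not\equiv 0$.

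The first and main step is to show $u_0 \in \mathcal{N}_+(0)$, since this membership is assumed outright in Theorem \ref{thm:global-well-posedness-high} but must now be extracted from the $L^2$-smallness of $u_0$. I would argue by contrapositive, establishing that every $v \in \mathcal{H}_0^2(\Omega) \backslash \{0\}$ with $I(v;0) \leq 0$ satisfies $\|v\|_2^2 \geq \underline{\lambdabar}_{\infty}$. If $I(v;0)=0$ then $v \in \mathcal{N}(0)$, so $\|v\|_2^2 \geq \lambdabar_{\infty}(0) \geq \underline{\lambdabar}_{\infty}$ by definition. If instead $I(v;0)<0$, i.e. $v \in \mathcal{N}_-(0)$, the scaling argument from the proof of Lemma \ref{lem:depth-bound}(iii) (refining Lemma \ref{lem:nehari-nonempty}) produces $\mu^* \in (0,1)$ with $\mu^* v \in \mathcal{N}(0)$, whence $(\mu^*)^2 \|v\|_2^2 = \|\mu^* v\|_2^2 \geq \underline{\lambdabar}_{\infty}$ and therefore $\|v\|_2^2 > \underline{\lambdabar}_{\infty}$ because $(\mu^*)^2 < 1$ and $\underline{\lambdabar}_{\infty}>0$. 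Applying the contrapositive to the nonzero $u_0$, the assumption $\|u_0\|_2^2 < \underline{\lambdabar}_{\infty}$ yields $I(u_0;0) > 0$, i.e. $u_0 \in \mathcal{N}_+(0)$.

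Next I would verify the smallness condition $\|u_0\|_2^2 \leq \mathfrak{L}(u_0)$. Recall $\mathfrak{L}(u_0) = \underline{\lambdabar}_{K_3(u_0)}$ with $K_3(u_0) > \overline{d}$, so that $\underline{\lambdabar}_{K_3(u_0)}$ is both finite and positive. Since $\mathcal{N}_{K_3}(t) = \mathcal{N}(t) \cap J^{K_3}(t) \subseteq \mathcal{N}(t)$, taking the infimum of $\|\cdot\|_2^2$ over the smaller set can only raise the value, giving $\lambdabar_{\infty}(t) \leq \lambdabar_{K_3}(t)$ for every $t \geq 0$ and hence $\underline{\lambdabar}_{\infty} \leq \underline{\lambdabar}_{K_3} = \mathfrak{L}(u_0)$. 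Combined with the hypothesis this yields $\|u_0\|_2^2 < \underline{\lambdabar}_{\infty} \leq \mathfrak{L}(u_0)$, so the smallness condition holds (in fact strictly).

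With $u_0 \in \mathcal{N}_+(0)$ and $\|u_0\|_2^2 \leq \mathfrak{L}(u_0)$ both confirmed, Theorem \ref{thm:global-well-posedness-high} applies verbatim and delivers the unique global weak solution. The only genuine difficulty is the first step, producing $u_0 \in \mathcal{N}_+(0)$; the remaining work is a straightforward comparison of infima over nested sets. Conceptually, the corollary can replace both the explicit Nehari requirement and the $u_0$-dependent threshold $\mathfrak{L}(u_0)$ by the single quantity $\underline{\lambdabar}_{\infty}$ precisely because $\underline{\lambdabar}_{\infty}$ lies below $\mathfrak{L}(u_0)$ while, via the scaling argument, separating $\mathcal{N}_+(0)$ from $\mathcal{N}(0) \cup \mathcal{N}_-(0)$ in the $L^2$-norm.
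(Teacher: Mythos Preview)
Your proposal is correct and follows essentially the same approach as the paper's own proof: both invoke Lemma \ref{lem:2-norm-inf-lower}(ii) to ensure $\underline{\lambdabar}_{\infty}>0$, rule out $u_0 \in \mathcal{N}(0) \cup \mathcal{N}_-(0)$ via the $L^2$-bound together with the scaling argument from Lemma \ref{lem:depth-bound}(iii), and then use the inclusion $\mathcal{N}_{K_3}(t) \subseteq \mathcal{N}(t)$ to get $\underline{\lambdabar}_{\infty} \leq \mathfrak{L}(u_0)$ before applying Theorem \ref{thm:global-well-posedness-high}. Your contrapositive packaging and the explicit remark that $u_0 \not\equiv 0$ are minor presentational refinements, not a different route.
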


\begin{proof}
    It follows from Lemma \ref{lem:2-norm-inf-lower}(ii) that 
    $\underline{\lambdabar}_{\infty}>0$. 
    We claim that $u_0 \in \mathcal{N}_+(0)$. By the definition 
    of $\underline{\lambdabar}_{\infty}(0)$, we know that 
    \begin{equation} \label{eqn:2-norm-upper-initial}
        \|u_0\|_2^2 < \underline{\lambdabar}_{\infty} \leq \lambdabar_{\infty}(0) 
        = \inf_{v \in \mathcal{N}(0)} \|v\|_2^2,
    \end{equation}
    from which it follows that $u_0 \notin \mathcal{N}(0)$. 
    If $u_0 \in \mathcal{N}_-(0)$, a similar argument as in the proof of 
    Lemma \ref{lem:depth-bound}(iii) shows that there exists a constant $\mu^*\in (0,1)$ 
    such that $\mu^* u_0 \in \mathcal{N}(0)$. Review the definition 
    of $\lambdabar_{\infty}(0)$, we obtain that 
    \[\|u_0\|_2^2 \geq (\mu^*)^{-2} \lambdabar_{\infty}(0) > \lambdabar_{\infty}(0),\]
    which contradicts \eqref{eqn:2-norm-upper-initial}. Thus, 
    $u_0 \in \mathcal{N}_+(0)$ and it holds that 
    \[\|u_0\|_2^2 < \underline{\lambdabar}_{\infty} \leq 
    \underline{\lambdabar}_{K_3}(u_0) = \mathfrak{L}(u_0).\]
    The conclusion then follows 
    directly from Theorem \ref{thm:global-well-posedness-high}.
\end{proof}

Due to the time-dependence of $\mathcal{N}(t)$, the method used in \cite{GAZZOLA2005961} is 
not readily applicable for analyzing the $\omega$-limit set of $u_0$, which makes it difficult 
to establish a blow-up result for the case $J(u_0; 0) > \overline{d}$ by contradiction. 
Thanks to Levine’s concavity argument \cite{LEVINE1973371}, we are able to obtain a 
sufficient condition for finite time blow-up with arbitrarily high initial 
energy $J(u_0; 0) > 0$. 

\begin{lemma}[\cite{LEVINE1973371}] \label{lem:levine-concavity}
    Assume that $F: [0,T] \to \mathbb{R}_{>0}$ is differentiable on $[0,T]$, 
    and $F'$ is absolutely continuous on $[0,T]$ with $F'(0)>0$. If there exists 
    positive constant $\alpha>0$ such that $FF''-(1+\alpha)F' \geq 0$ a.e. on $[0,T]$, 
    then $T \leq t^* \coloneqq \frac{F(0)}{\alpha F'(0)}$, and there exists 
    $t_* \in (0, t^*]$ such that $F(t) \to +\infty$ as $t \nearrow t_*$. 
\end{lemma}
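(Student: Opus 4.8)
The plan is to run Levine's classical concavity argument on the auxiliary function $G(t) := F(t)^{-\alpha}$, showing it is concave and hence lies below its tangent line at the origin; the zero of that tangent line simultaneously caps $T$ and locates the blow-up time. Before differentiating I would secure the regularity needed to differentiate twice and integrate back: since $F$ is differentiable (hence continuous) and strictly positive on the compact interval $[0,T]$, it attains a positive minimum, so $F \geq c > 0$ there, and since $F'$ is absolutely continuous, $F''$ exists almost everywhere with $F'(t)=F'(0)+\int_0^t F''(s)\,ds$. Consequently $G\in C^1([0,T])$ and $G'$ is again absolutely continuous, so the fundamental theorem of calculus is available for $G'$.

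Next I would compute, almost everywhere on $[0,T]$,
\[
G'(t) = -\alpha F^{-\alpha-1}F', \qquad
G''(t) = -\alpha F^{-\alpha-2}\bigl(FF'' - (1+\alpha)(F')^2\bigr).
\]
Here the hypothesis is understood in its standard form $FF''-(1+\alpha)(F')^2\ge 0$; it is the squared term $(F')^2$ that yields the correct sign. Because $F>0$ and $FF''-(1+\alpha)(F')^2\ge 0$ a.e., we obtain $G''\le 0$ a.e. Integrating the absolutely continuous $G'$ gives $G'(t)=G'(0)+\int_0^t G''(s)\,ds\le G'(0)$, and a second integration yields the affine majorant
\[
G(t) \le G(0) + G'(0)\,t, \qquad t\in[0,T].
\]

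Then I would read off the initial data $G(0)=F(0)^{-\alpha}>0$ and, using $F'(0)>0$, $G'(0)=-\alpha F(0)^{-\alpha-1}F'(0)<0$, so the affine majorant strictly decreases and vanishes precisely at $t^*=-G(0)/G'(0)=F(0)/(\alpha F'(0))$. Since $G=F^{-\alpha}>0$ wherever $F$ is finite, the inequality $G(t)\le G(0)+G'(0)t$ cannot persist once the right-hand side reaches zero; hence the interval on which $F$ stays finite is contained in $[0,t^*)$, giving $T\le t^*$. For the blow-up statement I would pass to the maximal interval of existence and note that the same majorant forces $G$ to decrease to $0$ at some $t_*\in(0,t^*]$, which is exactly $F(t)\to+\infty$ as $t\nearrow t_*$.

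The computation is short, so the only genuinely delicate point will be the regularity bookkeeping: one must confirm that $G'$ is absolutely continuous, so that $G''\le 0$ a.e. really integrates up to the affine bound rather than merely to a pointwise sign on the second derivative. This is precisely what the hypotheses that $F'$ is absolutely continuous and that $F$ is bounded away from $0$ on its compact domain provide.
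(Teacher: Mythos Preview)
The paper does not supply a proof of this lemma; it is quoted directly from Levine's 1973 paper and used as a black box in the proof of Theorem~\ref{thm:blow-up-high-energy}. Your argument is exactly the classical one Levine gave: set $G=F^{-\alpha}$, compute $G''=-\alpha F^{-\alpha-2}\bigl(FF''-(1+\alpha)(F')^2\bigr)\le 0$ a.e., integrate twice to trap $G$ below the tangent line at $0$, and read off the zero $t^*=F(0)/(\alpha F'(0))$. You also correctly flag that the displayed hypothesis in the paper is missing the square on $F'$; the inequality must be $FF''-(1+\alpha)(F')^2\ge 0$, as is confirmed both by the computation of $G''$ and by the way the lemma is invoked in the proof of Theorem~\ref{thm:blow-up-high-energy} (where the Cauchy--Schwarz step produces precisely a bound on $(F_2')^2$).

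Your treatment of the regularity is appropriate: $F$ continuous and positive on a compact interval gives a uniform lower bound, so composing with the smooth map $y\mapsto y^{-\alpha}$ preserves absolute continuity of the derivative and licenses the two integrations. The only part that is slightly informal is the final clause about $t_*$: as stated, $F$ lives on $[0,T]$ and is finite there, so ``$F(t)\to+\infty$'' implicitly refers to the maximal extension of $F$ under the same hypotheses. In applications (and in the paper) one argues by contradiction with global existence, so this point does not cause trouble, but it would be cleaner to phrase the conclusion simply as the bound $T<t^*$ (hence $T\le t^*$), which is what the argument actually delivers.
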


\begin{theorem} \label{thm:blow-up-high-energy}
    Assume that $u_0 \in \mathcal{H}_0^2(\Omega)$. If 
    $0<J(u_0;0)<\frac{p^--2}{2p^-}B_2^2\|u_0\|_{2}^2$, then any solution to 
    problem \eqref{eqn:main} blows up in finite time. 
\end{theorem}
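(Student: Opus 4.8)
The plan is to argue by contradiction using Levine's concavity lemma (Lemma \ref{lem:levine-concavity}) applied to a functional built from $\|u(t)\|_2^2$, after first showing that the hypothesis drives the solution into $\mathcal{N}_-(t)$ and pins it there with a quantitative energy gap. I work throughout with a solution in the sense of Definition \ref{def:strong}, so that the conservation law \eqref{eqn:conservation-law} is available. The algebraic engine is the elementary relation, valid for every $v\in\mathcal{H}_0^2(\Omega)$ (since $p(x)\ge p^-$ and $k(t)>0$),
\[
I(v;t)\le p^-J(v;t)-\frac{p^--2}{2}\|v\|_{(\alpha)}^2\le p^-J(v;t)-\frac{p^--2}{2}B_2^2\|v\|_2^2 .
\]

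\emph{Step 1 (invariance and energy gap).} I would introduce
\[
G(t):=\frac{p^--2}{2p^-}B_2^2\|u(t)\|_2^2-J(u(t);t),
\]
so that the hypothesis is exactly $G(0)>0$ and the relation above reads $I(u(t);t)\le -p^-G(t)$. Differentiating, using $(u,u_t)=-I(u;t)$ and $\frac{d}{dt}J(u;t)=-\|u_t\|_2^2-\int_\Omega\frac{k'(t)}{p(x)}|\nabla u|^{p(x)}dx\le 0$ (obtained from \eqref{eqn:conservation-law} and (H3)), I get $G'(t)\ge (p^--2)B_2^2\,G(t)$ wherever $G>0$. Since $G(0)>0$, a continuity/bootstrapping argument forces $G(t)>0$ on the whole existence interval, whence $I(u(t);t)<0$ and $\frac{d}{dt}\|u(t)\|_2^2=-2I(u;t)>0$. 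In particular $\|u(t)\|_2^2\ge\|u_0\|_2^2$ is nondecreasing, and feeding this monotonicity back into \eqref{eqn:conservation-law} yields the clean lower bound $G(t)\ge G(0)+\int_0^t\|u_\tau\|_2^2\,d\tau$.

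\emph{Step 2 (concavity and contradiction).} For parameters $\beta,\tau_0>0$ and arbitrary $T>0$, set
\[
F(t)=\int_0^t\|u(\tau)\|_2^2\,d\tau+(T-t)\|u_0\|_2^2+\beta(t+\tau_0)^2 .
\]
Then $F'(t)=2\int_0^t(u,u_\tau)\,d\tau+2\beta(t+\tau_0)$ and $F''(t)=-2I(u;t)+2\beta\ge 2p^-G(t)+2\beta$, while a Cauchy--Schwarz inequality in $L^2(0,t;\mathcal{H}_0(\Omega))\oplus\mathbb{R}$ gives $(F'(t))^2\le 4\,\eta(t)\psi(t)\le 4F(t)\psi(t)$, where $\eta(t)=\int_0^t\|u\|_2^2\,d\tau+\beta(t+\tau_0)^2\le F(t)$ and $\psi(t)=\int_0^t\|u_\tau\|_2^2\,d\tau+\beta$. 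Choosing $\alpha=\frac{p^--2}{2}>0$ (so $1+\alpha=p^-/2$) and $\beta\le\frac{p^-}{p^--1}G(0)$, the gap estimate from Step 1 gives $F''(t)\ge 4(1+\alpha)\psi(t)$, hence
\[
F(t)F''(t)-(1+\alpha)(F'(t))^2\ge F(t)\big[F''(t)-4(1+\alpha)\psi(t)\big]\ge 0 .
\]
Since $F(0)>0$ and $F'(0)=2\beta\tau_0>0$, Lemma \ref{lem:levine-concavity} forces $T\le t^*=\frac{F(0)}{\alpha F'(0)}=\frac{T\|u_0\|_2^2+\beta\tau_0^2}{2\alpha\beta\tau_0}$. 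Fixing $\beta$ as above and then taking $\tau_0$ large enough that $2\alpha\beta\tau_0>\|u_0\|_2^2$, this reduces to the uniform bound $T(2\alpha\beta\tau_0-\|u_0\|_2^2)\le\beta\tau_0^2$, which fails for $T$ large; this contradicts global existence, and the lemma simultaneously exhibits a finite $t_*\le t^*$ at which $F$, hence $\|u(t)\|_{H^2(\mathbb{R}^N)}$, blows up.

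The step I expect to be the main obstacle is Step 1: propagating the sign condition $I(u(t);t)<0$ together with the quantitative gap $G(t)>0$ in the presence of the ``moving'' Nehari manifold $\mathcal{N}(t)$ and the variable exponent $p(x)$. The differential inequality for $G$ must be arranged so that the time-dependent coefficient $k(t)$ (through the nonnegative term $\int_\Omega\frac{k'}{p(x)}|\nabla u|^{p(x)}dx$) and the inhomogeneity in $p(x)$ only contribute favorably; once $G(t)\ge G(0)+\int_0^t\|u_\tau\|_2^2\,d\tau$ is secured, the concavity bookkeeping of Step 2 (the precise choice $\alpha=(p^--2)/2$ and the admissible range of $\beta,\tau_0$) is routine.
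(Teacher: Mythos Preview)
Your proposal is correct and follows essentially the same approach as the paper: both use the inequality $I\le p^-J-\frac{p^--2}{2}B_2^2\|u\|_2^2$ to secure the invariance $I(u(t);t)<0$ together with the gap estimate, and then apply Levine's concavity lemma to the same functional (the paper's $F_2$ is one half of your $F$, with the specific choices $\beta=\eta=\frac{p^-}{p^--1}G(0)$ and $\tau_0=\sigma=\frac{2\|u_0\|_2^2}{(p^--2)\eta}$). The only cosmetic difference is in Step~1: you establish invariance via the differential inequality $G'\ge(p^--2)B_2^2\,G$ and a Gronwall/bootstrapping argument, whereas the paper argues directly by contradiction that $I$ cannot vanish---note incidentally that your inequality for $G'$ holds unconditionally (not merely where $G>0$), since $I\le -p^-G$ requires no sign hypothesis on $G$.
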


\begin{proof}
    Assume that $u(t)$ is a global solution to problem \eqref{eqn:main}.  
    It follows from
    \begin{align} \label{eqn:nehari-norm-control}
        I(u(t);t) &\leq p^- J(u(t);t) - \frac{p^--2}{2} \|u(t)\|_{(\alpha)}^2 \notag \\
        &\leq p^- J(u_0;0) - \frac{p^--2}{2} B_2^2 \|u(t)\|_2^2 \notag \\
        &< \frac{p^--2}{2} B_2^2 \left(\|u_0\|_2^2 - \|u(t)\|_2^2\right)
    \end{align} 
    that $I(u_0;0) < 0$. By employing an approach similar to the one 
    in the proof of Theorem \ref{thm:blow-up-low-energy}, we prove that 
    for $t \geq 0$, $u(t) \in \mathcal{N}_-(t)$. 
    If not, then there exists $t_1>0$ such that 
    $I(u(t_1);t_1)=0$ and $I(u(t);t)<0$ for $t \in [0,t_1)$. Recall the function 
    $F_1(t)$ as defined in the proof of Theorem 
    \ref{thm:blow-up-low-energy}. From 
    \[F_1(t_1) \geq F_1(0) + \int_{0}^{t_1} F_1'(\tau) d \tau \geq F_1(0) > 0,\]
    it follows that for $t \in [0,t_1)$, 
    \begin{align} \label{eqn:norm-nondecreasing}
    0<J(u_0;0)<\frac{p^--2}{2p^-}B_2^2\|u_0\|_{2}^2 \leq 
    \frac{p^--2}{2p^-}B_2^2\|u(t_1)\|_{2}^2.
    \end{align} 
    On the other hand, from \eqref{eqn:nehari-norm-control} we have
    \[J(u_0;0) \geq J(u(t_1);t_1) \geq \frac{1}{p^-} I(u(t_1);t_1) 
    + \frac{p^--2}{2p^-} \|u(t_1)\|_{(\alpha)}^2 \geq
    \frac{p^--2}{2p^-} B_2^2 \|u(t_1)\|_{(\alpha)}^2,\]
    which contradicts \eqref{eqn:norm-nondecreasing}. 
    Therefore, for $t \geq 0$, $u(t) \in \mathcal{N}_-(t)$ and $F_1(t)$ is strictly 
    increasing. 

    Set $\eta \coloneqq \frac{p^-}{p^--1} 
    \left(\frac{p^--2}{2p^-}B_2^2\|u_0\|_{2}^2 - J(u_0;0)\right)$ and 
    $\sigma \coloneqq \frac{2\|u_0\|_{2}^2}{(p^--2)\eta}$. For any $T'>0$, we define 
    \[F_2(t) = \frac{1}{2} \int_{0}^{t} \|u(\tau)\|_2^2 d \tau 
    + \frac{T' - t}{2} \|u_0\|_2^2  + \frac{\eta}{2} (t + \sigma)^2\]
    for $t \in [0,T']$. Then $F_2(t)$ is differentiable on $[0,T']$, and for
    all $t \in [0,T']$,
    \[F_2'(t) = \frac{1}{2} \|u(t)\|_2^2 - \frac{1}{2} \|u_0\|_2^2 
    + \eta (t + \sigma) = F_1(t) - F_1(0) + \eta (t + \sigma) > 0,\]
    which implies that $F_2(t) > 0$ for all $t \in [0,T']$.
    Moreover, $F_2'(t)$ is absolutely continuous on $[0,T']$ and 
    \[F_2''(t) = (u,u_t) + \eta = -I(u(t);t) + \eta = F_1'(t) + \eta\]
    for almost all $t \in [0,T']$. From \eqref{eqn:nehari-norm-control}, it follows that 
    for a.e. $t \in [0,T']$, 
    \begin{align} \label{eqn:F2-dd-lower}
        F_2''(t) & \geq -p^- J(u(t);t) 
        + \frac{p^--2}{2} B_2^2 \|u(t)\|_{2}^2 + \eta \notag \\
        & \geq -p^- J(u_0;0) + p^- \int_{0}^{t} \|\partial_t u(\tau)\|_2^2 d \tau  
        + \frac{p^--2}{2} B_2^2 \|u_0\|_{2}^2 + \eta.
    \end{align} 
    By the Cauchy-Schwarz inequality and H{\"o}lder's inequality
    we know that for a.e. $t \in [0,T']$,
    \[\varPhi(t) \coloneqq \left(\int_{0}^{t} \|u(\tau)\|_2^2 d \tau 
    + \eta (t + \sigma)^2 \right) 
    \left(\int_{0}^{t} \|\partial_t u(\tau)\|_2^2 d \tau + \eta \right) - 
    \left(\int_{0}^{t}(u,\partial_t u(\tau)) d \tau + \eta (t + \sigma) \right)^2 \geq 0.\]
    Then for a.e. $t \in [0,T']$, it holds that
    \begin{align} \label{eqn:F2-d-upper}
        (F_2'(t))^2 & = \left(\int_{0}^{t}(u,\partial_t u(\tau)) d \tau 
        + \eta (t + \sigma)\right)^2 \notag \\
        & = \left(2F_2(t)-(T'-t) \|u_0\|_2^2 \right) 
        \left(\int_{0}^{t} \|\partial_t u(\tau)\|_2^2 d \tau + \eta \right) -
        \varPhi(t) \notag \\
        & \leq 2 F_2(t) \left(\int_{0}^{t} \|\partial_t u(\tau)\|_2^2 d \tau + \eta \right).
    \end{align}
    In view of \eqref{eqn:F2-dd-lower} and \eqref{eqn:F2-d-upper}, we deduce that 
    \begin{equation*}
        F_2(t) F_2''(t) - \frac{p^-}{2} (F_2'(t))^2 
        \geq F_2(t) \left(\frac{p^--2}{2} B_2^2 \|u_0\|_{2}^2 - p^- J(u_0;0) - 
        (p^- - 1) \eta \right) = 0
    \end{equation*}
    for a.e. $t \in [0,T']$. It follows from Lemma \ref{lem:levine-concavity} that 
    \[T' \leq \frac{2F_2(0)}{(p^--2)F_2'(0)} 
    = \frac{T' \|u_0\|_{2}^2 + \eta \sigma^2}{(p^--2)\eta \sigma} 
    = \frac{T'}{2} + \frac{2\|u_0\|_{2}^2}{(p^--2)^2\eta},\]
    namely, 
    \[T' \leq \frac{4\|u_0\|_{2}^2}{(p^--2)^2\eta} 
    = \frac{8(p^--1)\|u_0\|_{2}^2}{(p^--2)^2
    \left((p^--2)B_2^2 \|u_0\|_{2}^2 - 2p^- J(u_0;0)\right)},\]
    and there exists $t_*$ such that
    \[\lim_{t \nearrow t_*} 
    \int_{0}^{t} \|u(\tau)\|_2^2 d \tau = + \infty,\]
    which contradicts the assumption that $u(t)$ is a global solution. 
    Thus, $u(t)$ blows up in finite time and its maximal existence interval
    $T_*$ satisfies 
    \[T_* \leq \frac{8(p^--1)\|u_0\|_{2}^2}{(p^--2)^2
    \left((p^--2)B_2^2 \|u_0\|_{2}^2 - 2p^- J(u_0;0)\right)},\]
    which concludes the proof.
\end{proof}

The section ends with a corollary of Theorem \ref{thm:blow-up-high-energy}.

\begin{corollary}
    For any $R>0$, there exists a $u_R \in \mathcal{H}_0^2(\Omega)$ such that 
    $J(u_R;0) = R$ and the solution of problem \eqref{eqn:main} 
    blows up in finite time with initial data $u_R$.
\end{corollary}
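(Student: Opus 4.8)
The plan is to read the Corollary as an existence-of-initial-data statement and reduce it entirely to Theorem~\ref{thm:blow-up-high-energy}. That theorem guarantees finite-time blow-up whenever $0<J(u_0;0)<\frac{p^--2}{2p^-}B_2^2\|u_0\|_2^2$; note that the sign condition $I(u_0;0)<0$ is \emph{not} an extra hypothesis but is derived inside that proof (the chain culminating in \eqref{eqn:nehari-norm-control}). Hence it suffices, for each fixed $R>0$, to exhibit a single $u_R\in\mathcal{H}_0^2(\Omega)$ with $J(u_R;0)=R$ and $\|u_R\|_2^2>\frac{2p^-R}{(p^--2)B_2^2}=:R'$; the equality $J(u_R;0)=R>0$ takes care of the first inequality and the $L^2$-bound is exactly the rewriting of the second with $J=R$ inserted. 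So the whole problem becomes: \emph{place a prescribed amount of energy $R$ on a configuration of arbitrarily large $L^2$-mass}, $R'$ being a fixed threshold depending only on $R$.

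To do this I would use a disjoint-bump construction. Fix a single bump $\phi\in C_0^\infty(\Omega)\setminus\{0\}$ and, for $n\in\mathbb{N}$, choose $n$ pairwise-disjoint translates $\phi_1,\dots,\phi_n$ of $\phi$ whose supports lie in $\Omega$, setting $U_n=\sum_{i=1}^n\phi_i$. Disjointness of supports makes all three relevant quantities grow linearly in $n$: $\|U_n\|_2^2=n\|\phi\|_2^2$, while $\|U_n\|_{(\alpha)}^2$ is comparable to $n$ because $\|\cdot\|_{(\alpha)}\sim\|\cdot\|_{H^2(\mathbb{R}^N)}$ and the Bessel norm $\|U_n\|_{H^2(\mathbb{R}^N)}^2$ is \emph{additive} over disjoint supports (the cross terms in $\|\cdot\|_2^2+\|\nabla\cdot\|_2^2+\|\Delta\cdot\|_2^2$ all vanish), and $\int_\Omega|\nabla U_n|^{p(x)}\,dx=\sum_i\int|\nabla\phi_i|^{p(x)}\,dx$ lies between $m_-n$ and $m_+n$ for positive constants coming from $2<p^-\le p(x)\le p^+<\infty$ and the fixed profile $\phi$. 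The key point is that the nonlocal term is defused by working with the equivalent (and additive) $H^2$-norm rather than with $\|(-\Delta)^s\cdot\|_2$ directly.

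Next I would analyse the scalar map $\lambda\mapsto J(\lambda U_n;0)=\tfrac{\lambda^2}{2}\|U_n\|_{(\alpha)}^2-k(0)\int_\Omega\frac{\lambda^{p(x)}}{p(x)}|\nabla U_n|^{p(x)}\,dx$. It vanishes at $\lambda=0$, is positive for small $\lambda$, and tends to $-\infty$ as $\lambda\to\infty$ because $p^->2$; its maximum $M(U_n)$ is of order $n$ (factoring $n$ out shows $J(\lambda U_n;0)\approx n\cdot\big[\tfrac{\lambda^2}{2}\tilde a-\tilde P(\lambda)\big]$ with $n$-independent, order-one coefficients). Thus for $n$ large we have $M(U_n)>R$, and the intermediate value theorem furnishes a decreasing-branch value $\lambda_n$ with $J(\lambda_n U_n;0)=R$. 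The decisive estimate is that the comparability ``of order $n$'' of all three quantities traps $\lambda_n$ in a fixed compact subinterval of $(0,\infty)$ independent of $n$: the upper bound follows from $\tfrac{\lambda_n^2}{2}\|U_n\|_{(\alpha)}^2>k(0)\int\frac{\lambda_n^{p(x)}}{p(x)}|\nabla U_n|^{p(x)}\gtrsim n\lambda_n^{p^-}$ (forcing $\lambda_n^{p^--2}\lesssim 1$), and the lower bound from $\lambda_n>\lambda^*(U_n)\ge c>0$ at the ray's maximiser. Consequently $\|\lambda_nU_n\|_2^2=\lambda_n^2\,n\|\phi\|_2^2$ grows like $n\to\infty$, so for $n$ large enough it exceeds $R'$; setting $u_R=\lambda_nU_n$ gives $J(u_R;0)=R$ together with $\|u_R\|_2^2>R'$, and Theorem~\ref{thm:blow-up-high-energy} applies.

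The main obstacle I anticipate is precisely the two uniform-in-$n$ estimates at the heart of the last paragraph: controlling the variable-exponent modular $\int_\Omega|\nabla U_n|^{p(x)}\,dx$ from above \emph{and} below (it is a modular, not a norm, so Lemma~2.3-type inequalities and the bounds $2<p^-\le p^+<2^*$ must be used with care), and showing that the solution $\lambda_n$ of $J(\lambda U_n;0)=R$ cannot escape to $0$ or $\infty$ as $n\to\infty$. This trapping of $\lambda_n$ in a compact subinterval is what converts the linear growth of $\|U_n\|_2^2$ into linear growth of $\|u_R\|_2^2$; without it one could only guarantee $J=R$ but not the required large $L^2$-mass. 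A naive single-ray scaling $u=\lambda v$ of one fixed $v$ fails exactly here, which is why the disjoint-bump family (making $\|U_n\|_2^2$ itself large while keeping the ray geometry uniform) is the right device.
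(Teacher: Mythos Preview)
Your overall strategy---reducing to Theorem~\ref{thm:blow-up-high-energy} by producing $u_R$ with $J(u_R;0)=R$ and $\|u_R\|_2^2>\frac{2p^-R}{(p^--2)B_2^2}$---is exactly right and matches the paper. Your observation that the nonlocal term can be handled by passing to the equivalent, additive norm $\|v\|_2^2+\|\nabla v\|_2^2+\|\Delta v\|_2^2$ (rather than claiming $\|(-\Delta)^s\cdot\|_2^2$ itself splits) is also sound.

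However, your construction has a geometric gap that you did not list among the anticipated obstacles: $\Omega$ is a \emph{bounded} domain, so the number of pairwise-disjoint translates of a \emph{fixed} bump $\phi$ whose supports fit inside $\Omega$ is bounded by roughly $|\Omega|/|\operatorname{supp}\phi|$. You cannot take $n$ arbitrarily large, yet your argument requires $n\to\infty$ (or at least $n$ proportional to $R$) both to force $M(U_n)>R$ and to force $\lambda_n^2 n\|\phi\|_2^2>R'$. For large $R$ the required $n$ will exceed what $\Omega$ can accommodate, and the construction collapses. Rescaling $\phi$ to fit more copies does not save the argument directly, since shrinking the support rescales $\|\phi\|_2^2$, $\|\Delta\phi\|_2^2$, and $\int|\nabla\phi|^{p(x)}dx$ by different powers, destroying the uniform-in-$n$ comparability your ray analysis relies on.

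The paper sidesteps this by using only \emph{two} disjoint subdomains $\Omega_1,\Omega_2\subset\Omega$. On $\Omega_1$ one takes $\overline{\mu}v_0$ with $\overline{\mu}$ large: since $p^->2$, the $p(x)$-term dominates and drives $J(\overline{\mu}v_0;0)\le 0$, while simultaneously $\|\overline{\mu}v_0\|_2^2=\overline{\mu}^2\|v_0\|_2^2$ exceeds the threshold. On $\Omega_2$ one then picks $w_0$ so that the total energy of $u_R=\overline{\mu}v_0+w_0$ equals $R$; because $J(\overline{\mu}v_0;0)\le 0$, the target for the $\Omega_2$-piece is $\ge R>0$, attainable on a suitable ray. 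This decouples ``large $L^2$-mass'' from ``prescribed energy'' without needing many bumps or any $n\to\infty$ limit.
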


\begin{proof}
    Let $\Omega_1$ and $\Omega_2$ be two disjoint open subdomains of $\Omega$. 
    Consider an arbitrary $v_0 \in \mathcal{H}_0^2(\Omega_1) \backslash \{0\}$, 
    one can choose $\overline{\mu} > 0$ large enough such that 
    $J(\overline{\mu} v_0;0) \leq 0$ and 
    $\|\overline{\mu} v_0\|_2 > B_2^{-2}\frac{2p^-}{p^--2} R$.
    On the other hand, for fixed $\overline{\mu} > 0$, we choose 
    $w_0 \in \mathcal{H}_0^2(\Omega_2)$ such 
    that $J(w_0;0)+J(\overline{\mu} v_0;0) = R$. Then 
    $u_R \coloneqq \overline{\mu} v_0 + w_0$ satisfies $J(u_R;0) = R$, and 
    it follows that 
    \[\frac{p^--2}{2p^-}B_2^2 \|u_R\|_2 \geq
    \frac{p^--2}{2p^-}B_2^2\|\overline{\mu} v_0\|_2 > R = J(u_R;0) > 0.\]
    Thus the proof is completed by Theorem \ref{thm:blow-up-high-energy}.
\end{proof}

\section{Lower bound for the lifespan} \label{sec6}

In this section, we shall establish a lower bound for the blow-up time. 
The following lemma shows that when $p^+ < \frac{2(N+4)}{N+2}$, blow-up in the 
$H^2(\mathbb{R}^N)$-norm is equivalent to blow-up in the $L^2(\mathbb{R}^N)$-norm. 

\begin{lemma} \label{lem:2-norm-blowup}
    Assume that $p^+ < \frac{2(N+4)}{N+2}$. Let $u(t)$ be a solution to 
    problem \eqref{eqn:main} with maximal existence time $T_* < + \infty$. Then 
    the blow up also occurs in the $L^2(\mathbb{R}^N)$-norm, that is, 
    $\|u(t)\|_2 \to + \infty$ as $t \nearrow T_*$.
\end{lemma}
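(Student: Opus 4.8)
The plan is to reduce the statement to the blow-up alternative already in hand and then control $\|u(t)\|_{(\alpha)}$ by a power of $\|u(t)\|_2$. Since $T_* < +\infty$, the solution blows up in the $H^2(\mathbb{R}^N)$-norm, so by the equivalence $\|\cdot\|_{(\alpha)} \sim \|\cdot\|_{H^2(\mathbb{R}^N)}$ we have $\|u(t)\|_{(\alpha)} \to +\infty$ as $t \nearrow T_*$. Thus it suffices to prove an inequality of the form $\|u(t)\|_{(\alpha)}^2 \leq C_1 + C_2\|u(t)\|_2^{\gamma}$ with $\gamma > 0$ for a.e.\ $t \in [0,T_*)$: once this is available, blow-up of the left-hand side forces $\|u(t)\|_2 \to +\infty$. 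I would run the final step as a contradiction argument, using that $\|u(t)\|_2$ is continuous on $[0,T_*)$ because $u \in C([0,T];\mathcal{W}_0^{1,p(\cdot)}(\Omega)) \hookrightarrow C([0,T];L^2(\Omega))$ (recall $p^- > 2$).

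First I would extract the energy bound. Because $k$ is continuous and nondecreasing and $T_* < +\infty$, we have $k(t) \leq k(T_*) =: k_*$ on $[0,T_*)$; combined with the nonnegativity of the dissipation terms, the energy inequality \eqref{eqn:energy-inequality} gives $J(u(t);t) \leq J(u_0;0)$ for a.e.\ $t$. Writing out $J(u(t);t)$, using $\frac{1}{p(x)} \leq \frac{1}{p^-}$, and invoking the modular estimate $\int_\Omega |\nabla u|^{p(x)}\,dx = \varrho(|\nabla u|) \leq \max\{\|\nabla u\|_{p(\cdot)}^{p^-}, \|\nabla u\|_{p(\cdot)}^{p^+}\} \leq 1 + \|\nabla u\|_{p(\cdot)}^{p^+}$, I obtain
\[
\tfrac{1}{2}\|u(t)\|_{(\alpha)}^2 \;\leq\; J(u_0;0) + \tfrac{k_*}{p^-}\bigl(1 + \|\nabla u(t)\|_{p(\cdot)}^{p^+}\bigr)
\]
for a.e.\ $t \in [0,T_*)$.

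Next I would insert the Gagliardo--Nirenberg estimate \eqref{eqn:gagliardo-1}, namely $\|\nabla u\|_{p(\cdot)} \leq \widetilde{C}_1\|u\|_{(\alpha)}^{\theta}\|u\|_2^{1-\theta}$ with $\theta = \frac{1}{2} + \frac{N}{4}\bigl(1-\frac{2}{p^+}\bigr)$, which bounds the right-hand side by a multiple of $\|u\|_{(\alpha)}^{\theta p^+}\|u\|_2^{(1-\theta)p^+}$. The crucial point, and the single place where the hypothesis is used, is the elementary equivalence
\[
p^+ < \frac{2(N+4)}{N+2} \iff \theta p^+ < 2,
\]
which I would verify by a direct computation (clearing denominators turns $\theta p^+ < 2$ into $(N+2)p^+ < 2(N+4)$). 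With $\theta p^+ < 2$, Young's inequality with conjugate exponents $\frac{2}{\theta p^+}$ and $\frac{2}{2-\theta p^+}$ lets me absorb the factor $\|u\|_{(\alpha)}^{\theta p^+}$ into $\frac{1}{4}\|u\|_{(\alpha)}^2$ at the cost of a term involving only $\|u\|_2$, yielding
\[
\|u(t)\|_{(\alpha)}^2 \;\leq\; C_1 + C_2\,\|u(t)\|_2^{\gamma}, \qquad \gamma = \frac{2(1-\theta)p^+}{2-\theta p^+} > 0,
\]
for a.e.\ $t \in [0,T_*)$.

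Finally I would conclude by contradiction: if $\|u(t)\|_2 \not\to +\infty$, then by continuity of $t \mapsto \|u(t)\|_2$ there is a sequence $t_j \nearrow T_*$ along which $\|u(t_j)\|_2$ stays bounded, and since the full-measure good set (where the displayed inequality holds) is dense, I may take the $t_j$ in it while keeping $\|u(t_j)\|_2$ bounded. The inequality then bounds $\|u(t_j)\|_{(\alpha)}$, contradicting $\|u(t_j)\|_{(\alpha)} \to +\infty$. I expect the main obstacle to be purely the exponent bookkeeping, i.e.\ pinning down $\theta p^+ < 2$ exactly under $p^+ < \frac{2(N+4)}{N+2}$ and keeping the Young exponents admissible, rather than any genuine analytic difficulty; the measure-zero set on which \eqref{eqn:energy-inequality} may fail is harmless thanks to the continuity of $\|u(t)\|_2$.
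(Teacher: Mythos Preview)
Your proof is correct and follows essentially the same route as the paper: use the energy bound $J(u(t);t)\le J(u_0;0)$ to control $\|u(t)\|_{(\alpha)}^2$ by the modular of $|\nabla u|$, then apply Gagliardo--Nirenberg and Young's inequality (crucially using $\theta p^+<2\iff p^+<\tfrac{2(N+4)}{N+2}$) to absorb the $\|u\|_{(\alpha)}$-factor and obtain $\|u(t)\|_{(\alpha)}^2\le C_1+C_2\|u(t)\|_2^{\gamma}$. The only cosmetic differences are that the paper bounds $\int_\Omega|\nabla u|^{p(x)}dx\le |\Omega|+\|\nabla u\|_{p^+}^{p^+}$ directly and invokes the strong-solution conservation law \eqref{eqn:conservation-law} (so your a.e.\ caveat and density argument are unnecessary here), whereas you route through $\|\nabla u\|_{p(\cdot)}$ via \eqref{eqn:gagliardo-1}.
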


\begin{proof}
    For $t \in [0,T_*)$, it follows from $J(u(t);t) \leq J(u_0;0)$ that 
    \begin{align} \label{eqn:energy-decreasing}
    \frac{1}{2} \|u(t)\|_{(\alpha)}^2 & \leq 
    J(u_0;0) + \frac{k(t)}{p^-} \int_{\Omega} |\nabla u|^{p(x)} dx \notag \\ & \leq 
    J(u_0;0) + \frac{k(T_*)}{p^-} |\Omega| + \frac{k(T_*)}{p^-} \|\nabla u\|_{p^+}^{p^+}.
    \end{align}
    Using the Gagliardo-Nirenberg 
    inequality and Young's inequality, we know that
    there exists a positive constant $\widetilde{C}_3$ such that 
    \begin{equation} \label{eqn:gagliardo-3}
        \|\nabla u\|_{p^+}^{p^+} 
        \leq \widetilde{C}_3 \|u\|_{(\alpha)}^{\theta p^+} \|u\|_2^{(1-\theta)p^+} 
        \leq \widetilde{C}_3 \left(\varepsilon^{\frac{2}{\theta p^+}} \|u\|_{(\alpha)}^2 
        + \varepsilon^{-\frac{2}{2-\theta p^+}} \|u\|_2^{\frac{2(1-\theta)p^+}{2-\theta p^+}}\right)
    \end{equation}
    holds for all $\varepsilon>0$,
    where $\theta p^+ = \frac{(N+2)p^+-2N}{4} < 2$. 
    Choosing $\varepsilon = \left(\frac{4 \widetilde{C}_3 k(T_*)}{p^-}\right)
    ^{-\frac{\theta p^+}{2}}$ in \eqref{eqn:gagliardo-3} and combining 
    it with \eqref{eqn:energy-decreasing}, we obtain that 
    \[\|u(t)\|_{H^2(\mathbb{R}^N)}^2 \sim \|u(t)\|_{(\alpha)}^2 
    \leq 4J(u_0;0)+ \frac{4k(T_*)}{p^-} |\Omega| 
    + \left(\frac{4 \widetilde{C}_3 k(T_*)}{p^-}\right)^{\frac{2}{2-\theta p^+}} 
    \left(\|u\|_2^2\right)^{1+\frac{p^+-2}{2-\theta p^+}}.\]
    Letting $t \nearrow T_*$ concludes the proof.
\end{proof}

We now proceed to estimate a lower bound for the lifespan.

\begin{theorem}
    Suppose that $u(t)$ is a solution to 
    problem \eqref{eqn:main} with maximal existence time $T_* < + \infty$. 
    If $p^+ < \frac{2(N+4)}{N+2}$, then there holds 
    \[T_* \geq \int_{F_1(0)}^{+\infty} \frac{dy}{C_4 y^{r^+} + C_5 y^{r^-}},\]
    where $F_1(0) = \frac{1}{2} \|u_0\|_2^2$, and $r^-$, $r^+$, $C_4$, $C_5$ 
    are positive constants independent of $u(t)$ that will be defined later.
\end{theorem}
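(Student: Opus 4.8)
The plan is to work with $F_1(t) = \frac{1}{2}\|u(t)\|_2^2$, already introduced in Section~\ref{sec4}, and to derive a first-order differential inequality of the form $F_1'(t) \le C_4 F_1^{r^+} + C_5 F_1^{r^-}$, which I can then integrate. First I would record that, testing the weak formulation \eqref{eqn:main-weak} against $\varphi = u$, one has
\[
F_1'(t) = (u, u_t) = -I(u(t);t) = k(t)\int_{\Omega} |\nabla u|^{p(x)}\,dx - \|u\|_{(\alpha)}^2
\]
for a.e. $t \in [0,T_*)$. Since $k$ is nondecreasing and continuous, $k(t) \le k(T_*) < \infty$ on $[0,T_*)$. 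The modular integral is then split by comparing $|\nabla u|$ with $1$, giving the pointwise bound $|\nabla u|^{p(x)} \le |\nabla u|^{p^-} + |\nabla u|^{p^+}$, so that $\int_{\Omega}|\nabla u|^{p(x)}\,dx \le \|\nabla u\|_{p^-}^{p^-} + \|\nabla u\|_{p^+}^{p^+}$.

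Next I would estimate each term $\|\nabla u\|_{p^\pm}^{p^\pm}$ by the Gagliardo--Nirenberg inequality exactly as in \eqref{eqn:gagliardo-3}, obtaining $\|\nabla u\|_{p^\pm}^{p^\pm} \le \widetilde{C}\,\|u\|_{(\alpha)}^{\theta_\pm p^\pm}\|u\|_2^{(1-\theta_\pm)p^\pm}$ with $\theta_\pm = \frac{1}{2} + \frac{N}{4}\bigl(1 - \frac{2}{p^\pm}\bigr)$. The hypothesis $p^+ < \frac{2(N+4)}{N+2}$ is precisely what guarantees $\theta_+ p^+ < 2$, and since $p \mapsto \theta(p)\,p = \frac{(N+2)p-2N}{4}$ is increasing this also yields $\theta_- p^- < 2$. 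Consequently Young's inequality with conjugate exponents $\frac{2}{\theta_\pm p^\pm}$ and $\frac{2}{2-\theta_\pm p^\pm}$ splits each term as $\varepsilon\|u\|_{(\alpha)}^2 + C(\varepsilon)\|u\|_2^{a_\pm}$ with $a_\pm = \frac{2(1-\theta_\pm)p^\pm}{2-\theta_\pm p^\pm}$. Returning to the identity $F_1'(t) + \|u\|_{(\alpha)}^2 = k(t)\int_{\Omega}|\nabla u|^{p(x)}\,dx$ and choosing $\varepsilon$ small enough that the total coefficient of $\|u\|_{(\alpha)}^2$ produced on the right is at most $\tfrac{1}{2}$, these contributions are absorbed into the left-hand side; dropping the remaining nonnegative $\|u\|_{(\alpha)}^2$ term and writing $\|u\|_2^2 = 2F_1$ gives
\[
F_1'(t) \le C_4\, F_1(t)^{r^+} + C_5\, F_1(t)^{r^-}, \qquad r^\pm = \frac{(1-\theta_\pm)p^\pm}{2-\theta_\pm p^\pm} > 0,
\]
where $C_4, C_5$ collect the Gagliardo--Nirenberg and Young constants, $k(T_*)$, and powers of $2$, and are independent of $u$.

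Finally, since $p^+ < \frac{2(N+4)}{N+2}$, Lemma~\ref{lem:2-norm-blowup} tells us that $\|u(t)\|_2 \to +\infty$, hence $F_1(t) \to +\infty$, as $t \nearrow T_*$. Dividing the differential inequality by its strictly positive right-hand side and integrating over $[0,T_*)$, the change of variables $y = F_1(t)$ yields
\[
\int_{F_1(0)}^{+\infty} \frac{dy}{C_4 y^{r^+} + C_5 y^{r^-}} \le T_*,
\]
which is the claimed bound. I expect the main obstacle to be the absorption step: one must verify that the subcriticality condition $\theta_\pm p^\pm < 2$ holds for \emph{both} exponents $p^-$ and $p^+$, and track carefully that the coefficient of $\|u\|_{(\alpha)}^2$ generated by Young's inequality (after scaling by $k(T_*)$ and $\widetilde{C}$) can be forced strictly below $1$, so that the uncontrolled higher-order norm genuinely cancels rather than merely being bounded.
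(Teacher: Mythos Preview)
Your proposal is correct and follows essentially the same route as the paper: express $F_1'(t)=-I(u(t);t)$, bound the modular by $\|\nabla u\|_{p^-}^{p^-}+\|\nabla u\|_{p^+}^{p^+}$, apply Gagliardo--Nirenberg together with Young's inequality (using $\theta_\pm p^\pm<2$) to absorb the $\|u\|_{(\alpha)}^2$ term, then integrate and invoke Lemma~\ref{lem:2-norm-blowup}. The only cosmetic difference is that the paper chooses the Young parameter so that each of the two contributions equals exactly $\tfrac12\|u\|_{(\alpha)}^2$, giving an exact cancellation rather than an absorption with slack; the resulting constants $C_4,C_5,r^\pm$ coincide with yours.
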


\begin{proof}
    Recall the function $F_1(t)$ defined in the proof of Theorem \ref{thm:blow-up-low-energy}.
    A simple computation shows that for a.e. $t \in (0,T_*)$,
    \begin{align} \label{eqn:n-nehari-upper}
        F_1'(t) &= - I(u(t)) = - \|u\|_{(\alpha)}^2 
        + k(t) \int_{\Omega} |\nabla u|^{p(x)} dx \notag \\
        & \leq - \|u\|_{(\alpha)}^2 + \kappa^* \|\nabla u\|_{p^+}^{p^+} 
        + \kappa^* \|\nabla u\|_{p^-}^{p^-},
    \end{align}
    where $\kappa^*$ denotes an arbitrary upper bound for $k(T_*)$. 
    Choosing $\varepsilon = \left(2 \kappa^* \widetilde{C}_3\right)
    ^{-\frac{\theta p^+}{2}}$ in \eqref{eqn:gagliardo-3}, we deduce that for 
    all $t \in [0,T_*)$, 
    \begin{equation} \label{eqn:pp-norm-upper}
        \kappa^* \|\nabla u\|_{p^+}^{p^+} 
        \leq \frac{1}{2} \|u\|_{(\alpha)}^2 + 
        \left(2^{\frac{p^+}{2}} \kappa^* \widetilde{C}_3\right)^{\frac{2}{2-\theta p^+}} 
        \left(\frac{1}{2} \|u\|_2^2\right)^{\frac{(1-\theta)p^+}{2-\theta p^+}} 
        = \frac{1}{2} \|u\|_{(\alpha)}^2 + C_4 (F_1(t))^{r^+},
    \end{equation}
    where 
    $C_4 \coloneqq \left(2^{\frac{p^+}{2}} \kappa^* \widetilde{C}_3\right)^{\frac{2}{2-\theta p^+}} = 
    \left(2^{\frac{p^+}{2}} \kappa^* \widetilde{C}_3\right)^{\frac{4}{2N+8-(N+2)p^+}}$, 
    $r^+ \coloneqq \frac{(1-\theta)p^+}{2-\theta p^+} 
    = \frac{2N-(N-2)p^+}{2N+8-(N+2)p^+}$. 
    Similarly, utilizing the Gagliardo-Nirenberg inequality and Young's inequality again, 
    we obtain that there exists a constant $\widetilde{C}_4 > 0$ such that 
    \begin{equation} \label{eqn:gagliardo-4}
        \|\nabla u\|_{p^-}^{p^-} 
        \leq \widetilde{C}_4 \|u\|_{(\alpha)}^{\gamma p^-} \|u\|_2^{(1-\gamma)p^-} 
        \leq \widetilde{C}_4 \left(\varepsilon^{\frac{2}{\gamma p^-}} \|u\|_{(\alpha)}^2 
        + \varepsilon^{-\frac{2}{2-\gamma p^-}} \|u\|_2^{\frac{2(1-\gamma)p^-}{2-\gamma p^-}}\right)
    \end{equation} 
    for all $\varepsilon>0$ and $t \in [0,T_*)$, 
    where $\gamma p^- = \frac{(N+2)p^--2N}{4} < 2$. 
    Choosing $\varepsilon = \left(2 \kappa^* \widetilde{C}_4\right)
    ^{-\frac{\gamma p^-}{2}}$ in \eqref{eqn:gagliardo-4}, we get
    \begin{equation} \label{eqn:pm-norm-upper}
        \kappa^* \|\nabla u\|_{p^-}^{p^-} \leq \frac{1}{2} \|u\|_{(\alpha)}^2 + C_5 (F_1(t))^{r^-},
    \end{equation}
    where 
    $C_5 \coloneqq \left(2^{\frac{p^-}{2}} \kappa^* \widetilde{C}_4\right)^{\frac{4}{2N+8-(N+2)p^-}}$, 
    $r^- \coloneqq \frac{2N-(N-2)p^-}{2N+8-(N+2)p^-}$. 
    Combining \eqref{eqn:n-nehari-upper}, \eqref{eqn:pp-norm-upper}, and \eqref{eqn:pm-norm-upper}, 
    we know that for a.e. $t \in (0,T_*)$,
    \begin{equation} \label{eqn:f1-di-upper}
        F_1'(t) \leq C_4 (F_1(t))^{r^+} + C_5 (F_1(t))^{r^-}.
    \end{equation}
    Integrating \eqref{eqn:f1-di-upper} from $0$ to $t$, it follows that 
    \[\int_{F_1(0)}^{F_1(t)} \frac{dy}{C_4 y^{r^+} + C_5 y^{r^-}} \leq t\]
    holds for all $t \in [0,T_*)$.
    Letting $t \nearrow T_*$, Lemma \ref{lem:2-norm-blowup} implies that 
    $F_1(t) \to +\infty$, which leads to 
    \[T_* \geq \int_{F_1(0)}^{+\infty} \frac{dy}{C_4 y^{r^+} + C_5 y^{r^-}}.\]
    It is easy to check that the right-hand side of the above inequality is convergent, 
    as $2<p^- \leq p^+ < \frac{2(N+4)}{N+2}$ implies $r^+ \geq r^- > 1$. 
    This fact concludes the proof.
\end{proof} 

\section{Numerical experiments} \label{sec7}

We begin this section by performing a numerical discretization of \eqref{eqn:main} 
on the two-dimensional rectangular domain
$\Omega=(0,L_x) \times (0,L_y)$. This discretization is based on finite difference method and 
Fourier spectral method, partially inspired by the work of A. Bueno-Orovio et 
al. \cite{BUENOOROVIO2014}. In the $x$ and $y$ directions, we select $N_x$ and $N_y$ nodes 
respectively, which results in spatial grid sizes 
of $\Delta x = L_x / (N_x+1)$, $\Delta y = L_y / (N_y+1)$.
Denote by $\Delta t$ the time step size, then the spatio-temporal grid is given by
\[\begin{array}{ll}
    t_n = n \Delta t, & n = 0,1,2,\cdots \\
    x_i = i\Delta x, & y_j = j\Delta y, \quad i = 0,\cdots,N_x+1, \quad j = 0,\cdots,N_y+1.
\end{array}\]
For any function $v(t,x,y)$, we denote by $v^n = \{v_{i,j}^n\}_{\forall i, j}$ the
discrete function defined on the above grid, which approximates the value of 
$v(t_n,x_i,y_j)$. The following notations and assumptions are introduced for 
the formulation of the numerical scheme:
\begin{align*}
    &x_{-1} = -\Delta x, \ y_{-1} = -\Delta y, \ 
    x_{N_x+2} = L_x + \Delta x, \ y_{N_y+2} = L_y + \Delta y, \\
    &u^n_{i,-1} = u^n_{i,0} = u^n_{i,N_y+1} = u^n_{i,N_y+2} = 0, \\
    &u^n_{-1,j} = u^n_{0,j} = u^n_{N_x+1,j} = u^n_{N_x+2,j} = 0, \\
    &u^0_{i,j} = u_0(x_i,y_j), \ k^n = k(t_n), 
    \ p_{i,j} = p(x_i,y_j), \\
    &\mathcal{A}u = \left((-\Delta)^2 + \alpha (-\Delta)^{2s} -\Delta\right) u, \
    \mathcal{B}u = \operatorname*{div}\left(|\nabla u|^{p(x,y)-2}\nabla u\right).
\end{align*}
We intend to use a semi-implicit scheme formally expressed as 
\begin{equation} \label{eqn:semi-implicit-form}
    \frac{u^{n+1} - u^n}{\Delta t} + (\mathcal{A} u)^{n+1} = - k^n (\mathcal{B}u)^n.
\end{equation}
To this end, we begin by discretizing $(\mathcal{B}u)^n$.
For $i = 0,\cdots,N_x+1, j = 0,\cdots,N_y+1$, we define a grid function $c^n$ given by 
\[c^n_{i,j} = \left(\left(\frac{u^n_{i + 1,j} - u^n_{i - 1,j}}{2 \Delta x}\right)^2
+\left(\frac{u^n_{i, j + 1} - u^n_{i,j - 1}}{2 \Delta y}\right)^2\right)
^{\frac{p_{i,j}}{2}-1}.\]
For $i = 1,\cdots,N_x, j = 1,\cdots,N_y$, set
\[c^n_{i+\frac{1}{2},j} = \frac{c^n_{i+1,j} + c^n_{i,j}}{2}, \
c^n_{i-\frac{1}{2},j} = \frac{c^n_{i,j} + c^n_{i-1,j}}{2}, \ 
c^n_{i,j+\frac{1}{2}} = \frac{c^n_{i,j+1} + c^n_{i,j}}{2}, \
c^n_{i,j-\frac{1}{2}} = \frac{c^n_{i,j} + c^n_{i,j-1}}{2},\]
then $(\mathcal{B}u)^n$ can be discretized using a standard central difference approach:
\begin{align}
    (\mathcal{B}u)_{i,j}^n 
    &= \frac{c^n_{i+\frac{1}{2},j}\frac{u^n_{i+1,j} - u^n_{i,j}}{\Delta x} 
    - c^n_{i-\frac{1}{2},j}\frac{u^n_{i,j} - u^n_{i-1,j}}{\Delta x}}{\Delta x}
    + \frac{c^n_{i,j+\frac{1}{2}}\frac{u^n_{i,j+1} - u^n_{i,j}}{\Delta y}
     - c^n_{i,j-\frac{1}{2}}\frac{u^n_{i,j} - u^n_{i,j-1}}{\Delta y}}{\Delta y} \notag \\
    &= \frac{1}{2(\Delta x)^2} \left[(c^n_{i+1,j}+c^n_{i,j}) u^n_{i+1,j} 
    - (c^n_{i+1,j} + 2c^n_{i,j} + c^n_{i-1,j}) u^n_{i,j} 
    + (c^n_{i,j}+c^n_{i-1,j}) u^n_{i-1,j}\right] \notag \\ 
    & \quad + \frac{1}{2(\Delta y)^2} \left[(c^n_{i,j+1}+c^n_{i,j}) u^n_{i,j+1} 
    - (c^n_{i,j+1} + 2c^n_{i,j} + c^n_{i,j-1}) u^n_{i,j} 
    + (c^n_{i,j}+c^n_{i,j-1}) u^n_{i,j-1}\right].
\end{align}
To handle the second term on the left-hand side of \eqref{eqn:semi-implicit-form}, 
we use the two-dimensional discrete sine transform (DST). For a function 
$v=\{v_{i,j}\}_{\forall i, j}$ defined on the aforementioned spatial grid, 
its DST is given by
\[\widehat{v}(\omega_1,\omega_2) = \sum_{i=1}^{N_x} \sum_{j=1}^{N_y} v_{i,j} 
\sin (\omega_1 x_i) \sin (\omega_2 y_j),\]
and it can be reconstructed by the two-dimensional inverse discrete sine transform (IDST):
\[v_{i,j} = \frac{4}{(N_x+1)(N_y+1)} 
\sum_{m=1}^{N_x} \sum_{l=1}^{N_y} \widehat{v}(\omega_1,\omega_2) 
\sin (\omega_1 x_i) \sin (\omega_2 y_j),\]
where 
\[\omega_1 = \frac{\pi m}{L_x}, \ \omega_2 = \frac{\pi l}{L_y}, \ 
m = 1, \cdots,N_x, \ n = 1, \cdots,N_y.\]
Let $\lambda(\omega_1,\omega_2) = \omega_1^2+\omega_2^2$. An approximation 
of the DST to $(\mathcal{A} u)^{n+1}$ can then be obtained by
\[\widehat{(\mathcal{A} u)^{n+1}} = \left(\lambda^2(\omega_1,\omega_2) 
+ \lambda^{2s}(\omega_1,\omega_2) + \lambda(\omega_1,\omega_2)\right) 
\widehat{u}^{n+1}(\omega_1,\omega_2).\]
Consequently, equation \eqref{eqn:semi-implicit-form} can be computed by 
\[\widehat{u}^{n+1}(\omega_1,\omega_2) = 
\frac{\widehat{u}^{n}(\omega_1,\omega_2)-\Delta t \cdot k^n 
\widehat{(\mathcal{B}u)^{n}}(\omega_1,\omega_2)}
{1+\Delta t \left[\left(\omega_1^2+\omega_2^2\right)^2 
+ \left(\omega_1^2+\omega_2^2\right)^{2s} + \omega_1^2+\omega_2^2\right]}.\]
Taking the IDST on both sides of the above equality yields $u^{n+1}$.

\begin{figure}[htbp]
    \centering
    \includegraphics[width=\linewidth]{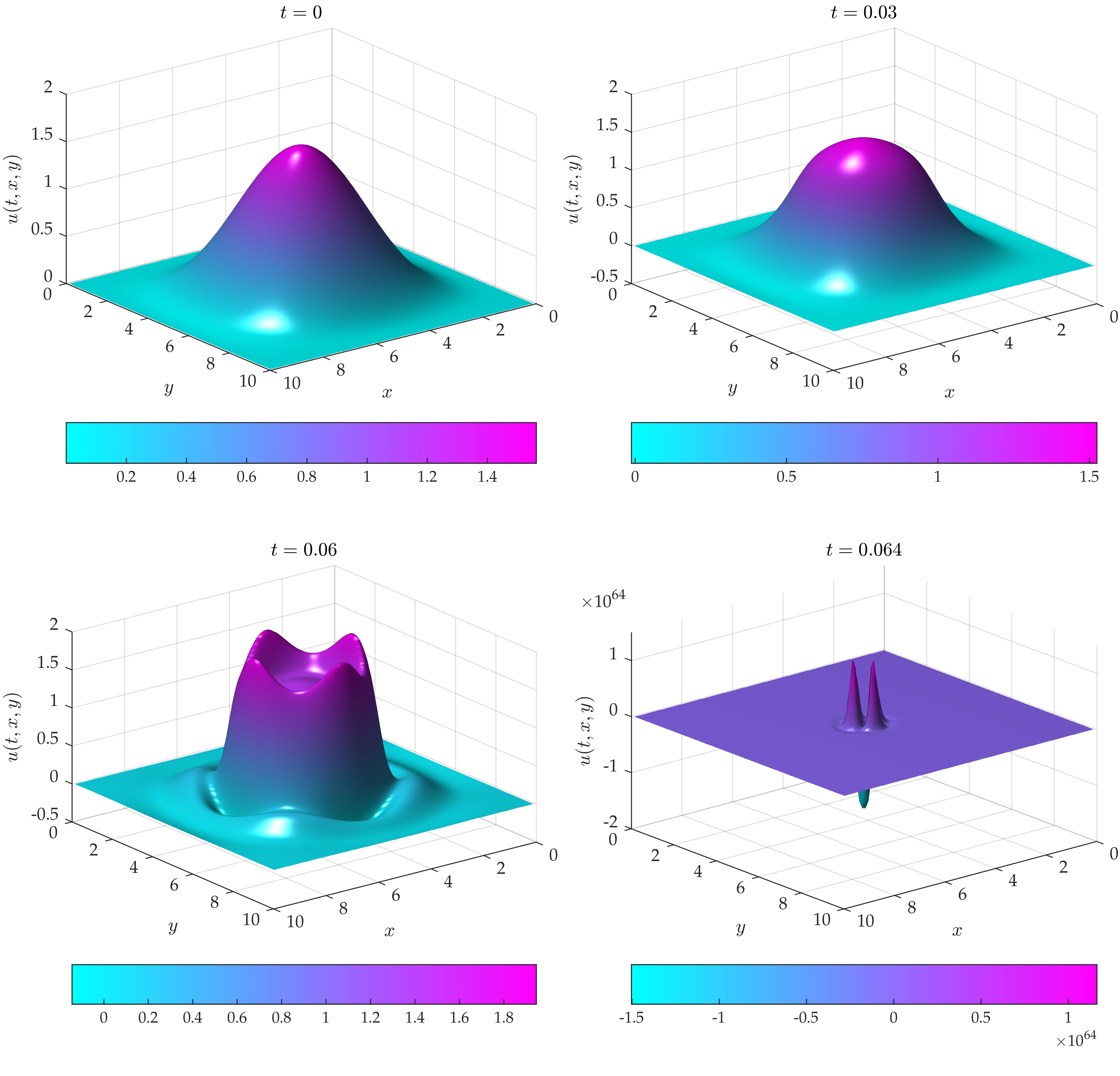}
    \caption{Time evolution of the numerical solution in Example \ref{exm:example1}. 
    Blow-up occurs at approximately $t=0.064$.} \label{fig:example1}
\end{figure}

\begin{figure}[htbp]
    \centering
    \subfigure[Choice of $p(x,y)$]
    {
        \centering
        \includegraphics[width=.5\linewidth]{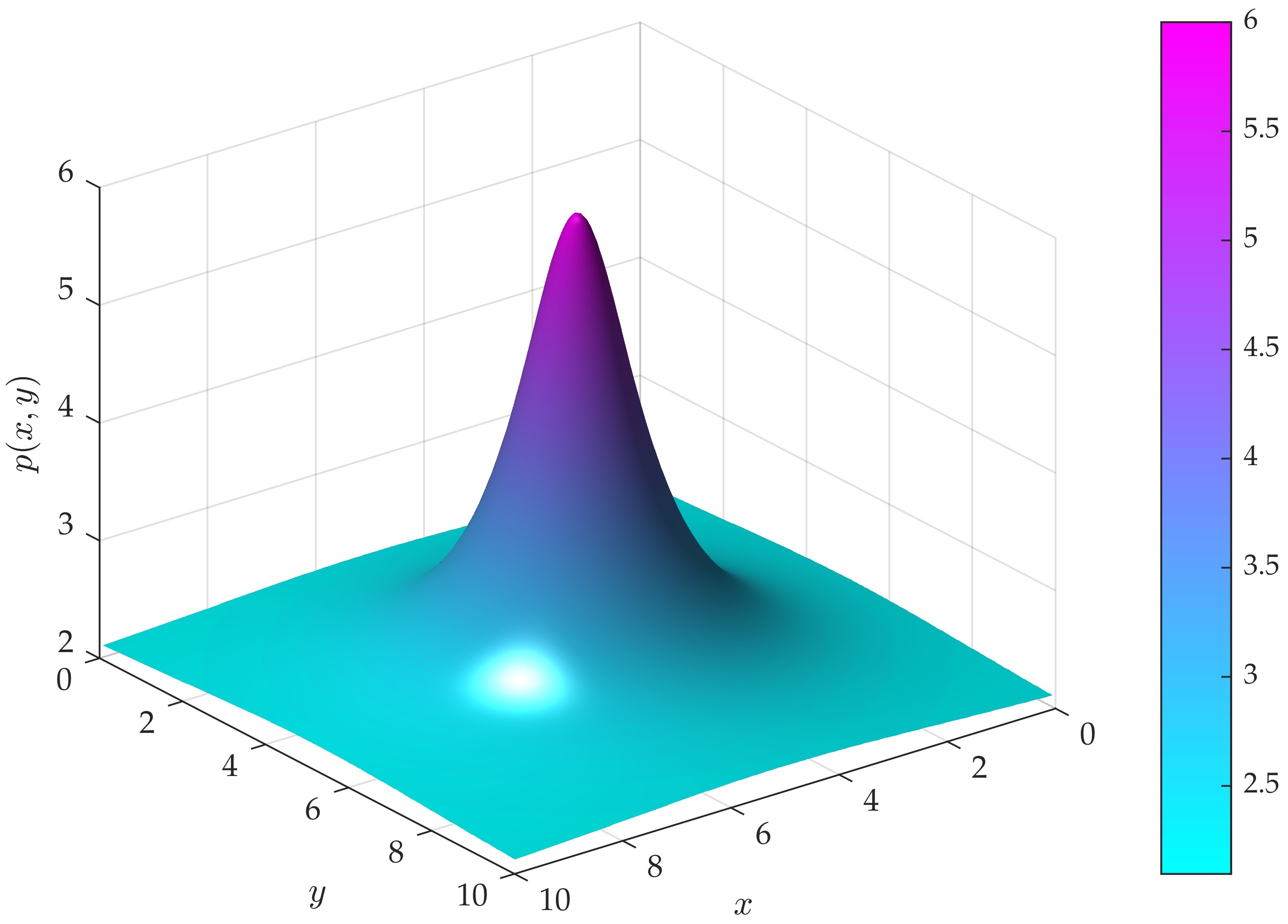} \label{fig:example1-px}
    } 
    \subfigure[Time evolution of $F_1(t)$]
    {
        \centering
        \includegraphics[width=.46\linewidth]{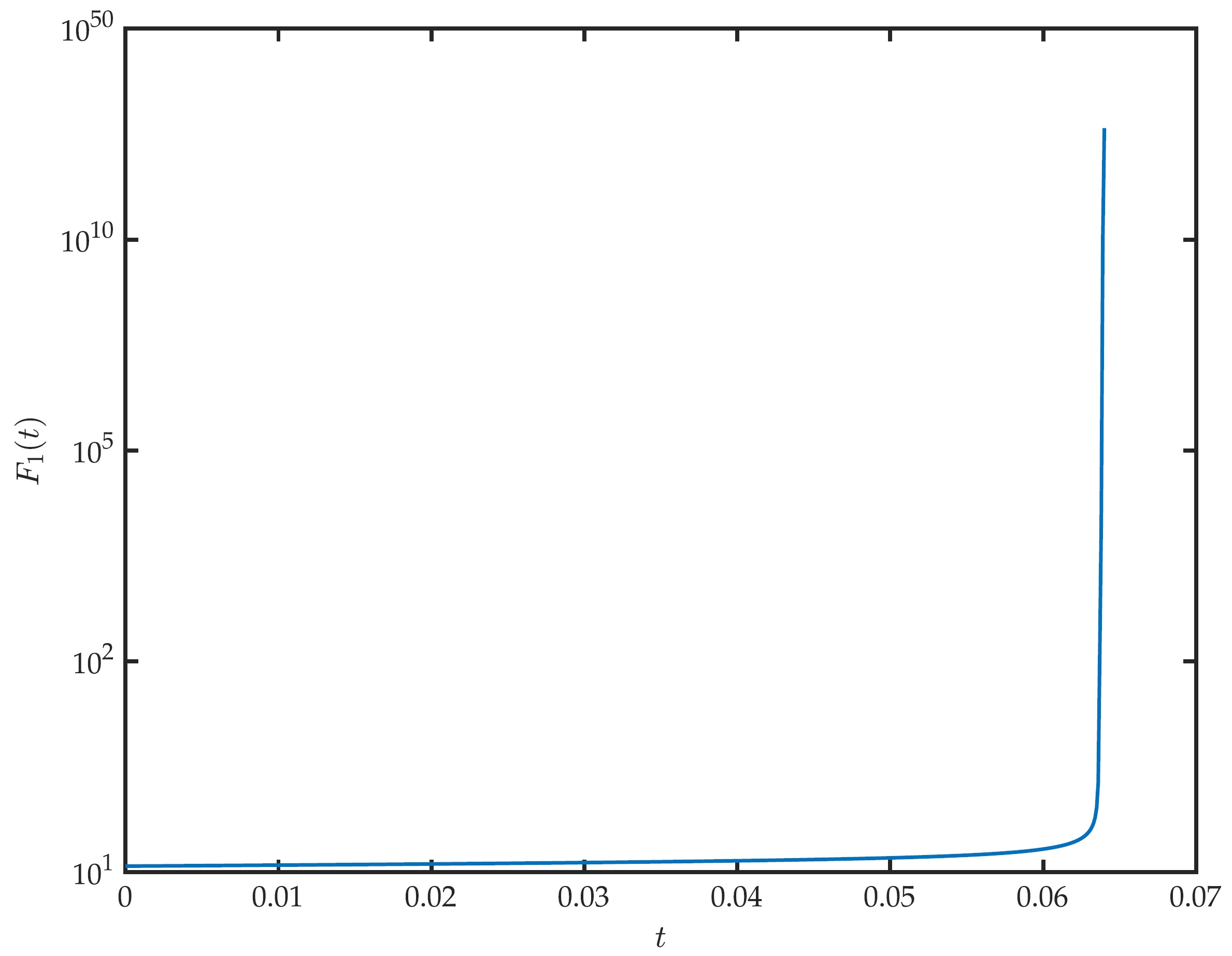} \label{fig:example1-f1t}
    }
    \caption{The choice of the variable exponent and 
    the evolution of $F_1(t) = \frac{1}{2} \|u(t)\|_2^2$ 
    in time, both corresponding to Example \ref{exm:example1}.} \label{fig:example1-px-f1t}
\end{figure}

\begin{figure}[htbp]
    \centering
    \includegraphics[width=\linewidth]{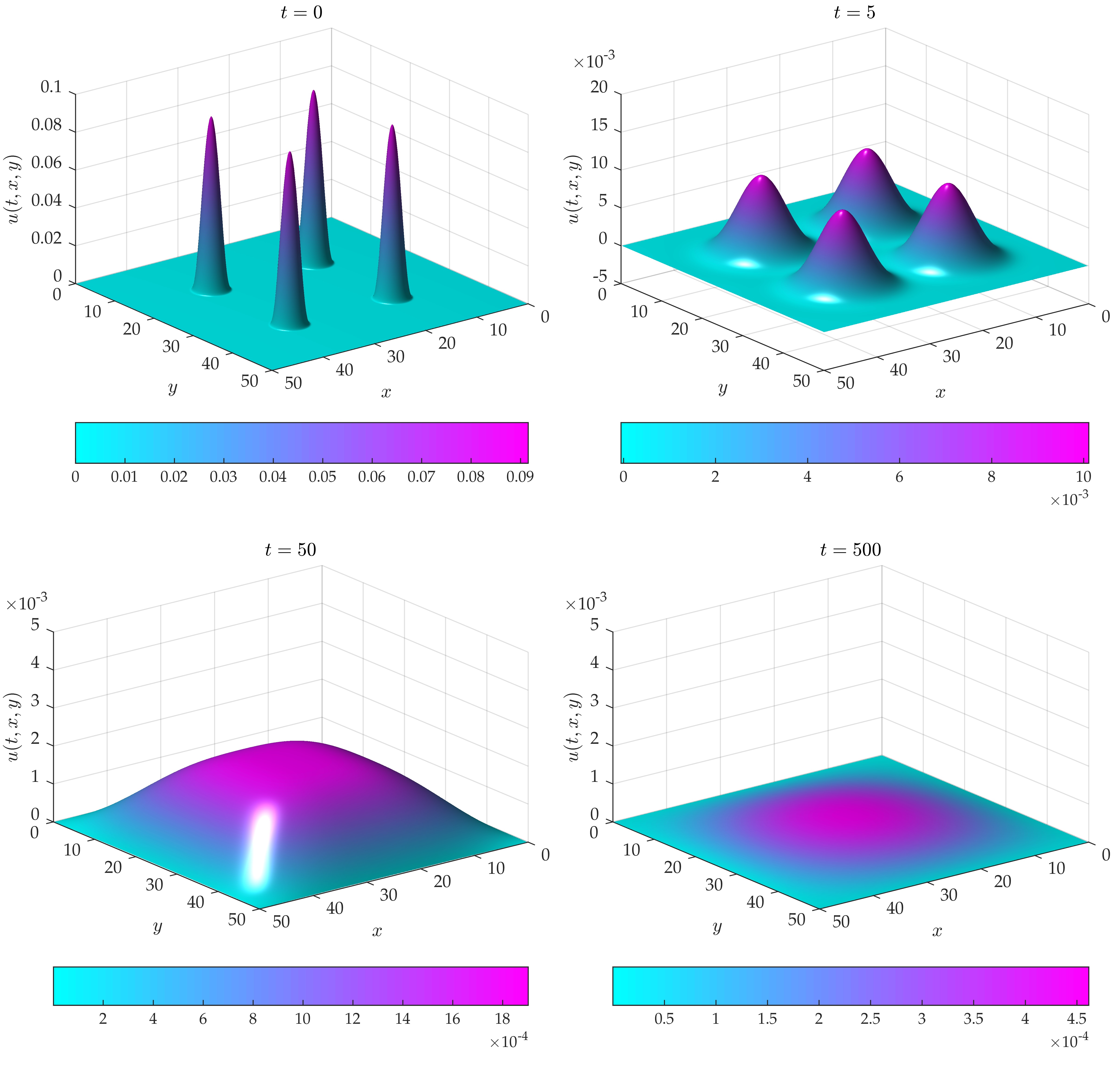}
    \caption{Time evolution of the numerical solution in Example \ref{exm:example2}.} 
    \label{fig:example2}
\end{figure}

\begin{figure}[htbp]
    \centering
    \subfigure[Choice of $p(x,y)$]
    {
        \centering
        \includegraphics[width=.5\linewidth]{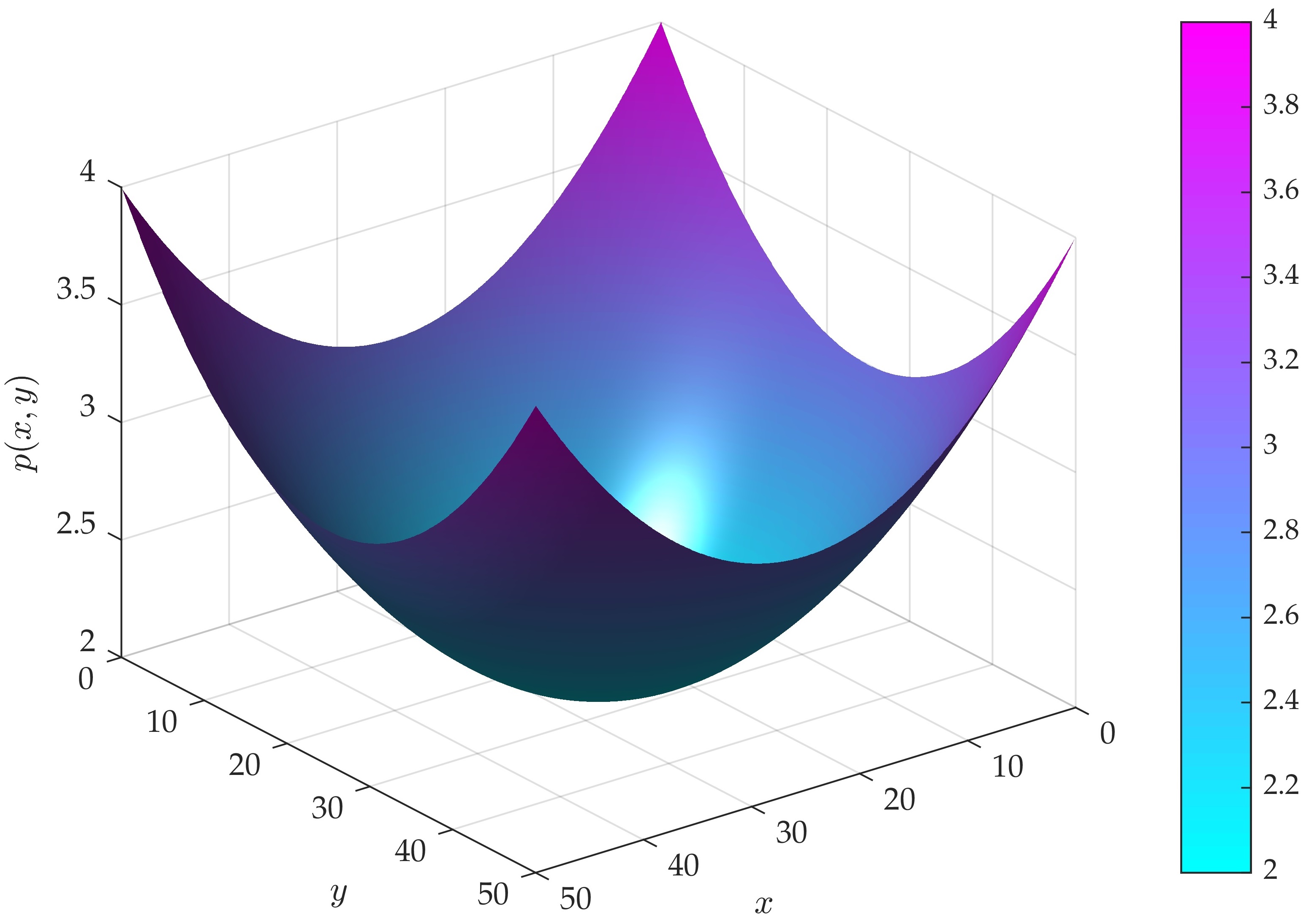} \label{fig:example2-px}
    } 
    \subfigure[Time evolution of $F_1(t)$]
    {
        \centering
        \includegraphics[width=.46\linewidth]{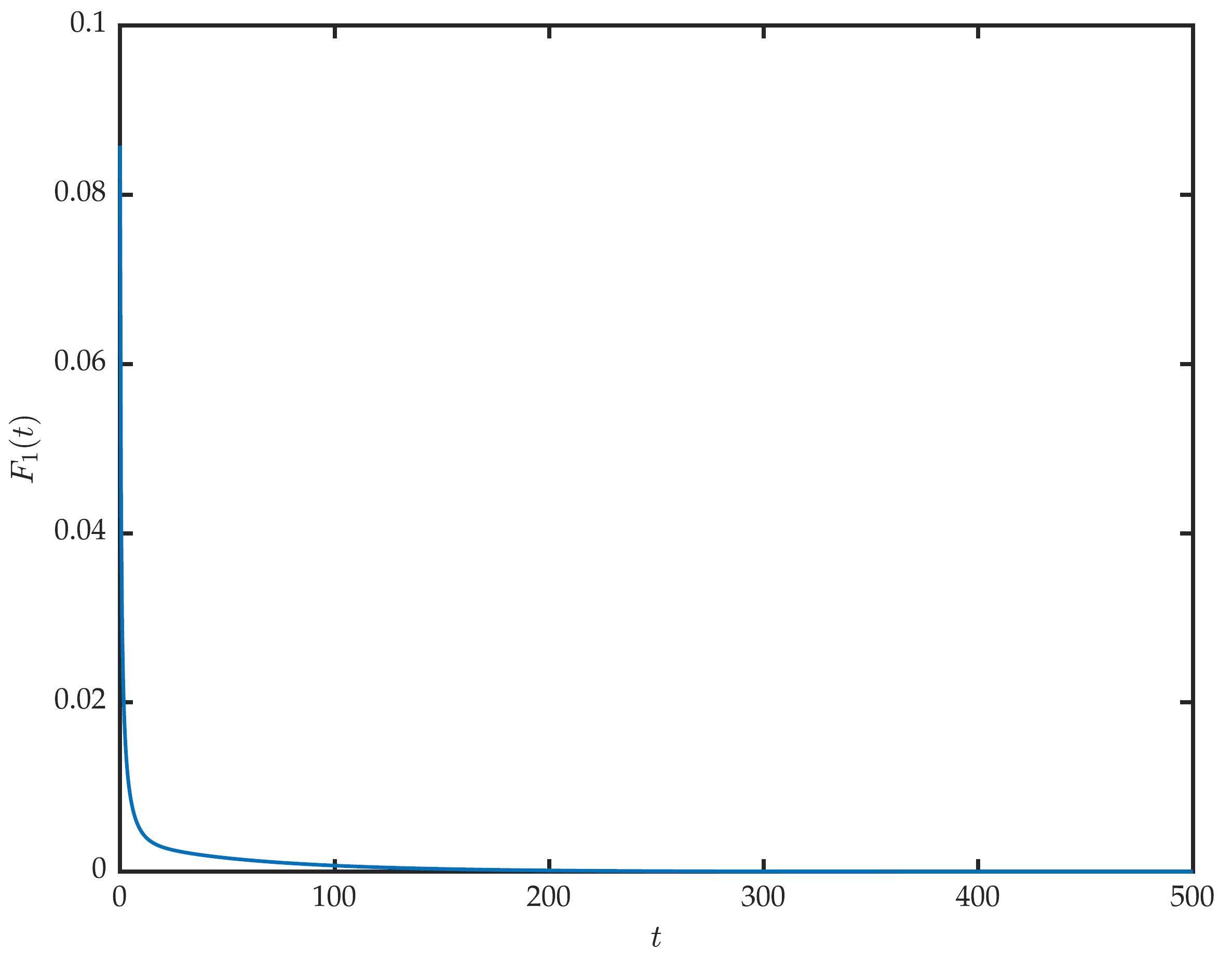} \label{fig:example2-f1t}
    }
    \caption{The choice of the variable exponent and 
    the evolution of $F_1(t) = \frac{1}{2} \|u(t)\|_2^2$ 
    in time, both corresponding to Example \ref{exm:example2}.} 
\end{figure}

Next, we use the above discretization to show the blow-up and large time behavior of 
problem \eqref{eqn:main} through two numerical experiments.

\begin{example} \label{exm:example1}
    Let us consider the numerical solution to problem \eqref{eqn:main} 
    on $\Omega=(0,10)^2$ under the initial condition 
    \[u_0(x,y) = \frac{xy}{400}(10-x)(10-y)\sin^2 \frac{\pi x}{10}\sin^2 \frac{\pi y}{10}.\]
    In this example, we choose 
    \[p(x,y)=2+\frac{5}{(x-5)^2+(y-5)^2+1.5}, \quad k(t)=10 e^t,\]
    and set $N_x = N_y = 150$, $\alpha = -0.95$, $s=0.9$, $\Delta t = 10^{-4}$. 
    Under these settings, the initial values of the energy functional and the Nehari functional 
    can be computed as $J(u_0;0) \approx -13.7952<0$ and $I(u_0;0) \approx -39.5642<0$, 
    respectively. This indicates that $u_0$ satisfies the sufficient condition 
    for blow-up to occur.
    Figure \ref{fig:example1} presents surface plots of $u_0$ as well as the numerical 
    solutions at $t=0.03$, $t=0.06$ and $t=0.064$. Figure \ref{fig:example1-px}
    provides a surface plot displaying the chosen function $p(x,y)$. In Fig. \ref{fig:example1-f1t}, 
    the time evolution of $F_1(t) = \frac{1}{2} \int_{\Omega} |u(t,x,y)|^2 dx$ is shown. 
    It can be observed that the $L^2$-norm of the numerical solution increases rapidly for 
    $t \geq 0.06$, and blow-up occurs at approximately $t=0.064$.
\end{example}

\begin{example} \label{exm:example2}
    Let us now consider another numerical example on $\Omega=(0,50)^2$ for
    \[u_0 = 5 \left(u_{0}^{(1)} + u_{0}^{(2)} + u_{0}^{(3)} + u_{0}^{(4)}\right),\]
    where 
    \[u_{0}^{(i)}(x,y) = \left\{
        \begin{array}{ll}
            \exp \left(-\dfrac{64}{16-\left(x-x^{(i)}\right)^2-\left(y-y^{(i)}\right)^2}\right), 
             & \left(x-x^{(i)}\right)^2+\left(y-y^{(i)}\right)^2 < 16,  \\ 
            0, & \textrm{else},  
        \end{array}
    \right.\]
    $i=1,2,3,4$. We choose $\left(x^{(1)},y^{(1)}\right)=(15,15)$,
    $\left(x^{(2)},y^{(2)}\right)=(35,15)$, $\left(x^{(3)},y^{(3)}\right)=(15,35)$,
    $\left(x^{(4)},y^{(4)}\right)=(35,35)$,
    \[p(x,y)=2+\left(\frac{x}{25}-1\right)^2+\left(\frac{y}{25}-1\right)^2, \quad 
    k(t)=\frac{1}{400} \left(1+\frac{2}{\pi} \arctan t\right),\]
    and set $N_x = N_y = 500$, $\alpha = -0.05$, $s=0.9$, $\Delta t = 0.5$. 
    Figure~\ref{fig:example2} displays surface plots of the initial data and 
    the numerical solutions at $t=5, 50$ and $500$.
    Figure \ref{fig:example2-px} illustrates the chosen $p(x,y)$, and
    the evolution of $F_1(t)$ on time is shown in Figure \ref{fig:example2-f1t}.
    Over the time interval $[0,500]$, the $L^2$-norm of the numerical solution 
    decreases monotonically and approaches zero, exhibiting smoothing and decaying behavior.
\end{example}

We now proceed to show how \eqref{eqn:main} can be applied to image sharpening. 
In the subsequent experiments, we consistently set $\Delta t=5 \times 10^{-4}$, 
$\alpha=-0.75$, $s=0.9$, $k(t)=0.1 \times 5^{9t}$ and 
\[p(x,y)=\min \left\{3.5, 3 + 0.1 |\nabla u_0(x,y)|^{0.25}\right\}.\]
Let us revisit the analysis of the model presented in the introduction.
Under the above setting, the function $p(x,y)$ is designed to take larger 
values at locations where the initial gradient $|\nabla u_0|$ is large. 
When $t$ is not too large so that $k(t)<1$, locations with larger 
$|\nabla u_0|$ correspond to smaller thresholds $\mathfrak{T}$, favoring 
the occurrence of backward diffusion; in contrast, locations with smaller 
$|\nabla u_0|$ yield larger thresholds $\mathfrak{T}$, making forward 
diffusion more likely. Moreover, since $k(t)$ is designed to be increasing 
in time, the threshold $\mathfrak{T}$ decreases with time at each spatial 
location, which encourages noise suppression in the very early stages 
of the evolution and promotes rapid edge enhancement thereafter, ultimately 
leading to better visual quality.

\begin{figure}[htbp] 
    \centering
    \subfigure[Original]
    {
        \centering
        \includegraphics[width=.3\textwidth]{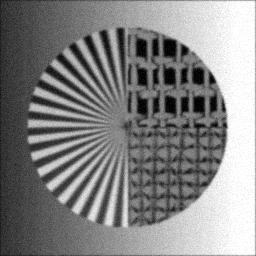} \label{fig:hybrid-ori} 
    }    
    \subfigure[Linear backward diffusion]
    {
        \centering
        \includegraphics[width=.3\textwidth]{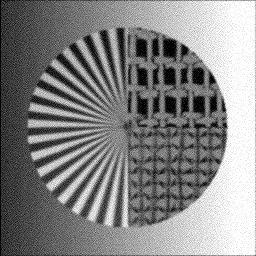} \label{fig:hybrid-bh}
    }

    \subfigure[Shock filter]{
        \centering
        \includegraphics[width=.3\textwidth]{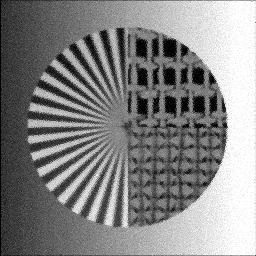}
    }
    \subfigure[Ours]{
        \centering
        \includegraphics[width=.3\textwidth]{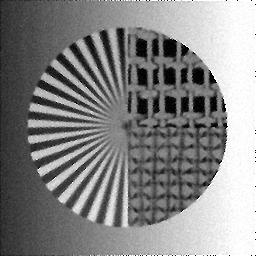}
    }
    \caption{(a) Blurry and noisy synthetic image. (b) Sharpening result using 
    linear backward diffusion equation \eqref{eqn:bh-bwd} with 
    $\varepsilon=0.001$ and $t=0.2$. 
    (c) Sharpening result using shock filter with $t=0.5$. 
    (d) Sharpening result using the proposed model \eqref{eqn:main} with $t=0.025$.} 
    \label{fig:hybrid}
\end{figure}

\begin{figure}[htbp] 
    \centering
    \subfigure[Original]
    {
        \centering
        \includegraphics[width=.3\textwidth]{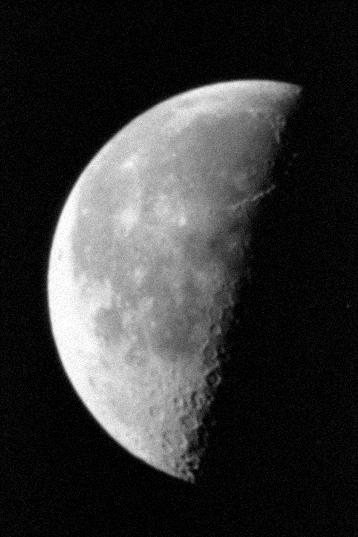}
    }    
    \subfigure[Linear backward diffusion]
    {
        \centering
        \includegraphics[width=.3\textwidth]{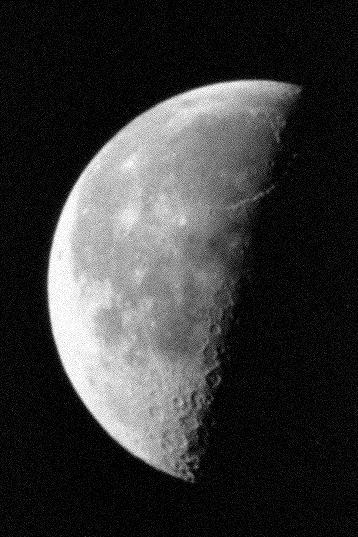}
    }

    \subfigure[Shock filter]{
        \centering
        \includegraphics[width=.3\textwidth]{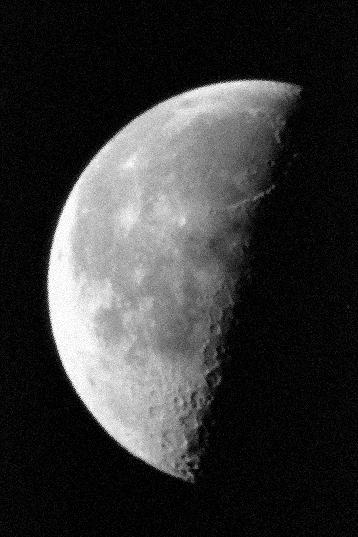}
    }
    \subfigure[Ours]{
        \centering
        \includegraphics[width=.3\textwidth]{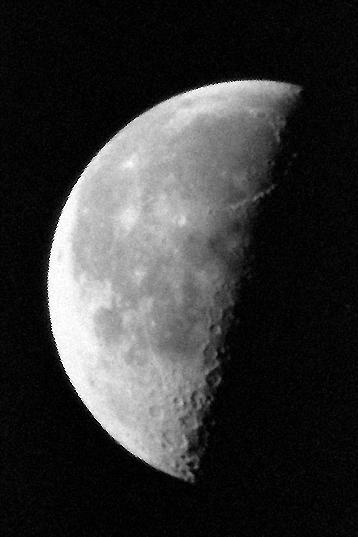}
    }
    \caption{(a) Blurry and noisy moon image. (b) Sharpening result using 
    linear backward diffusion equation \eqref{eqn:bh-bwd} with 
    $\varepsilon=0.001$ and $t=0.2$. 
    (c) Sharpening result using shock filter with $t=0.5$. 
    (d) Sharpening result using the proposed model \eqref{eqn:main} with $t=0.025$.} 
    \label{fig:moon}
\end{figure}

\begin{figure}[htbp] 
    \centering
    \subfigure[Original]
    {
        \centering
        \includegraphics[width=.25\textwidth]{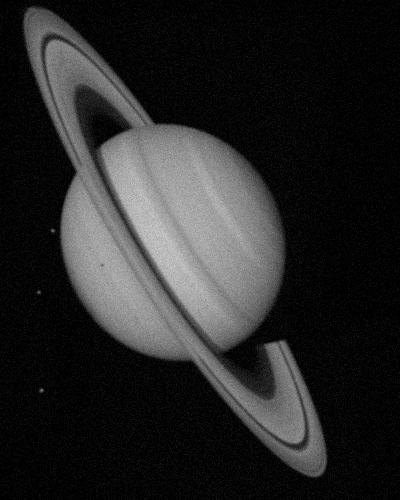} \label{fig:saturn-ori}
    }    
    \subfigure[Sharpened]
    {
        \centering
        \includegraphics[width=.25\textwidth]{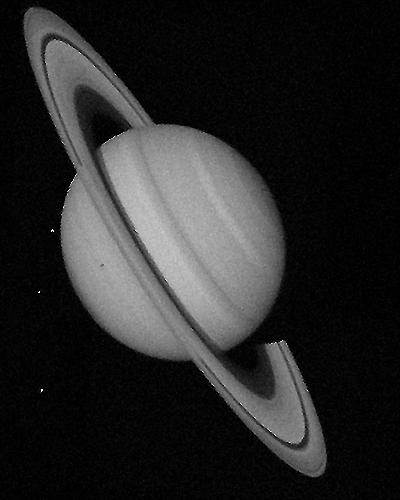}
    }
    \subfigure[Contrast-enhanced]{
        \centering
        \includegraphics[width=.25\textwidth]{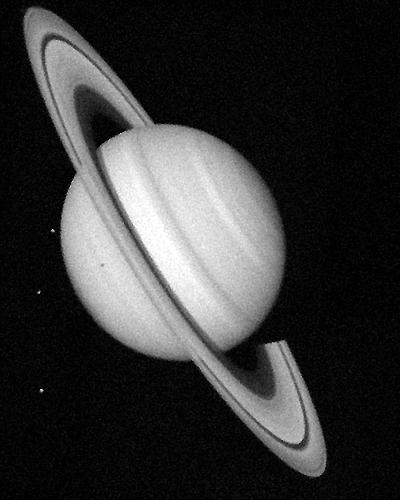} \label{fig:saturn-enhance}
    }
    \caption{(a) Blurry and noisy Saturn image. 
    (b) Sharpening result using model \eqref{eqn:main} with $t=0.05$.
    (c) Contrast enhancement result using model \eqref{eqn:main-source} 
    with $t=0.03$ and $\lambda=10$.} \label{fig:saturn}
\end{figure}

\begin{figure}[htbp] 
    \centering
    \subfigure[Original]
    {
        \centering
        \includegraphics[width=.25\textwidth]{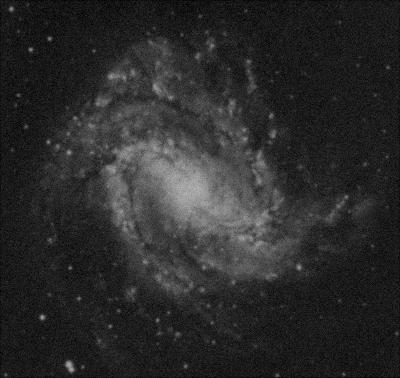} \label{fig:galaxy-ori}
    }    
    \subfigure[Sharpened]
    {
        \centering
        \includegraphics[width=.25\textwidth]{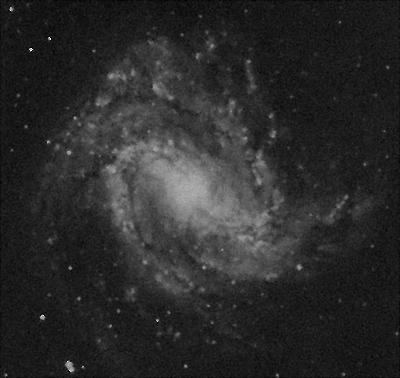}
    }
    \subfigure[Contrast-enhanced]{
        \centering
        \includegraphics[width=.25\textwidth]{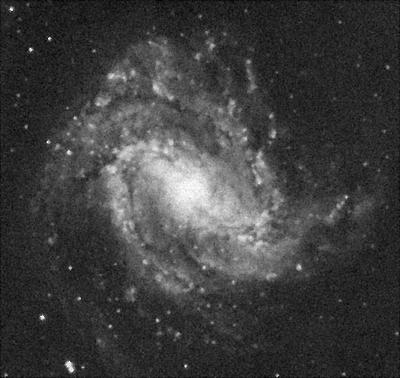} \label{fig:galaxy-enhance}
    }
    \caption{(a) Blurry and noisy galaxy image. 
    (b) Sharpening result using model \eqref{eqn:main} with $t=0.04$.
    (c) Contrast enhancement result using model \eqref{eqn:main-source} 
    with $t=0.025$ and $\lambda=15$.} \label{fig:galaxy}
\end{figure}

Figure \ref{fig:hybrid} presents the edge enhancement results on a synthetic image. 
Alongside the proposed model \eqref{eqn:main}, sharpening results obtained 
using a linear backward diffusion equation with a fourth-order regularization term: 
\begin{equation}
u_t + \varepsilon \Delta^2 u = -\Delta u \label{eqn:bh-bwd}
\end{equation}
and the shock filter 
\[u_t  = - \frac{|\nabla u|}{1+|\Delta u|}\Delta u\]
are also included for comparison. As shown in Fig. \ref{fig:hybrid-bh}, the sharpening 
result obtained using the linear backward diffusion significantly amplifies the noise 
in the original image, which is a common drawback of such type of methods. 
Both the shock filter and the proposed model \eqref{eqn:main} yield visually better results;
however, the edges in the result produced by \eqref{eqn:main} appear sharper, and 
the overall noise level is lower. In addition to performing edge enhancement, the proposed 
model also largely preserves the texture details inside the circular region and the smooth 
ramp features outside. Figure \ref{fig:moon} shows the results of applying the 
aforementioned sharpening methods to a moon test image. It can be observed that the 
proposed model achieves a visually pleasing result: the edges of the moon become sharper, 
surface details such as craters are well-preserved, and the noise in the original image 
has minimal impact on the sharpening result.

At the end of this section, we point out that the proposed model can be easily modified 
into a contrast enhancement model by introducing a linear source term $\lambda u$ on the 
right-hand side of \eqref{eqn:main}, leading to the following form:
\begin{equation}
    u_t + \mathcal{L}_{\alpha} u =
    \operatorname{div}\left((1-k(t)|\nabla u|^{p(x)-2})\nabla u\right) + \lambda u, 
    \label{eqn:main-source}
\end{equation}
This modification is similar to the approaches in \cite{YANG202229,DONG20156060}
and enables simultaneously enhancing the edges and overall contrast of the image. 
Figures \ref{fig:saturn} and \ref{fig:galaxy} show the results of applying 
\eqref{eqn:main} and \eqref{eqn:main-source} to enhance the Saturn and galaxy 
test images, respectively. Figures \ref{fig:saturn-ori} and \ref{fig:galaxy-ori} 
are original images that have blurry edges and low contrast. 
After enhancement by \eqref{eqn:main-source}, the resulting images shown in 
Figs. \ref{fig:saturn-enhance} and \ref{fig:galaxy-enhance} exhibit sharper edges 
and improved overall contrast, achieving desirable visual quality.


\begin{thebibliography}{84}
    \expandafter\ifx\csname natexlab\endcsname\relax\def\natexlab#1{#1}\fi
    \providecommand{\url}[1]{\texttt{#1}}
    \providecommand{\href}[2]{#2}
    \providecommand{\path}[1]{#1}
    \providecommand{\DOIprefix}{}
    \providecommand{\ArXivprefix}{arXiv:}
    \providecommand{\URLprefix}{URL: }
    \providecommand{\Pubmedprefix}{pmid:}
    \providecommand{\doi}[1]{\href{https://doi.org/#1}{\path{https://doi.org/#1}}}
    \providecommand{\Pubmed}[1]{\href{pmid:#1}{\path{#1}}}
    \providecommand{\bibinfo}[2]{#2}
    \ifx\xfnm\relax \def\xfnm[#1]{\unskip,\space#1}\fi
    \bibitem[{Adams and Fournier(2003)}]{ADAMS200379}
    \bibinfo{author}{R.~A. Adams}, \bibinfo{author}{J.~J. Fournier},
      \bibinfo{title}{Sobolev Spaces}, volume \bibinfo{volume}{140} of
      \textit{\bibinfo{series}{Pure and Applied Mathematics}},
      \bibinfo{publisher}{Academic Press}, \bibinfo{address}{Amsterdam},
      \bibinfo{year}{2003}.
    \bibitem[{Antontsev and Shmarev(2015)}]{ANTONTSEV2015}
    \bibinfo{author}{S.~Antontsev}, \bibinfo{author}{S.~Shmarev},
      \bibinfo{title}{Evolution {PDEs} with Nonstandard Growth Conditions},
      \bibinfo{publisher}{Atlantis Press}, \bibinfo{year}{2015}.
      \DOIprefix\doi{10.2991/978-94-6239-112-3}.
    \bibitem[{Bergerhoff et~al.(2020)Bergerhoff, C{\'a}rdenas, Weickert, and
      Welk}]{BERGERHOFF2020}
    \bibinfo{author}{L.~Bergerhoff}, \bibinfo{author}{M.~C{\'a}rdenas},
      \bibinfo{author}{J.~Weickert}, \bibinfo{author}{M.~Welk},
    \newblock \bibinfo{title}{Stable backward diffusion models that minimise convex
      energies},
    \newblock \bibinfo{journal}{J. Math. Imaging Vision} \bibinfo{volume}{62}
      (\bibinfo{year}{2020}) \bibinfo{pages}{941--960}.
      \DOIprefix\doi{10.1007/s10851-020-00976-3}.
    \bibitem[{Bertozzi and Greer(2004)}]{BERTOZZI2004764}
    \bibinfo{author}{A.~L. Bertozzi}, \bibinfo{author}{J.~B. Greer},
    \newblock \bibinfo{title}{{Low-curvature image simplifiers: Global regularity
      of smooth solutions and Laplacian limiting schemes}},
    \newblock \bibinfo{journal}{Comm. Pure Appl. Math.} \bibinfo{volume}{57}
      (\bibinfo{year}{2004}) \bibinfo{pages}{764--790}.
      \DOIprefix\doi{10.1002/cpa.20019}.
    \bibitem[{Boyer and Fabrie(2013)}]{BOYER2013}
    \bibinfo{author}{F.~Boyer}, \bibinfo{author}{P.~Fabrie},
      \bibinfo{title}{Mathematical Tools for the Study of the Incompressible
      Navier-Stokes Equations and Related Models}, \bibinfo{publisher}{Springer},
      \bibinfo{address}{New York}, \bibinfo{year}{2013}.
      \DOIprefix\doi{10.1007/978-1-4614-5975-0}.
    \bibitem[{Bueno-Orovio et~al.(2014)Bueno-Orovio, Kay, and
      Burrage}]{BUENOOROVIO2014}
    \bibinfo{author}{A.~Bueno-Orovio}, \bibinfo{author}{D.~Kay},
      \bibinfo{author}{K.~Burrage},
    \newblock \bibinfo{title}{Fourier spectral methods for fractional-in-space
      reaction-diffusion equations},
    \newblock \bibinfo{journal}{BIT Numer. Math.} \bibinfo{volume}{54}
      (\bibinfo{year}{2014}) \bibinfo{pages}{937--954}.
      \DOIprefix\doi{10.1007/s10543-014-0484-2}.
    \bibitem[{Calder and Mansouri(2011)}]{CALDER20111536}
    \bibinfo{author}{J.~Calder}, \bibinfo{author}{A.~Mansouri},
    \newblock \bibinfo{title}{Anisotropic image sharpening via well-posed {Sobolev}
      gradient flows},
    \newblock \bibinfo{journal}{SIAM J. Math. Anal.} \bibinfo{volume}{43}
      (\bibinfo{year}{2011}) \bibinfo{pages}{1536--1556}.
      \DOIprefix\doi{10.1137/100810654}.
    \bibitem[{Cangiotti et~al.(2024)Cangiotti, Caponi, Maione, and
      Vitillaro}]{CANGIOTTI2024677}
    \bibinfo{author}{N.~Cangiotti}, \bibinfo{author}{M.~Caponi},
      \bibinfo{author}{A.~Maione}, \bibinfo{author}{E.~Vitillaro},
    \newblock \bibinfo{title}{{Schr{\"o}dinger-Maxwell equations driven by mixed
      local-nonlocal operators}},
    \newblock \bibinfo{journal}{Fract. Calc. Appl. Anal.} \bibinfo{volume}{27}
      (\bibinfo{year}{2024}) \bibinfo{pages}{677--705}.
      \DOIprefix\doi{10.1007/s13540-024-00251-x}.
    \bibitem[{Chan et~al.(2013)Chan, Lanza, Morigi, and Sgallari}]{CHAN2013276}
    \bibinfo{author}{R.~H. Chan}, \bibinfo{author}{A.~Lanza},
      \bibinfo{author}{S.~Morigi}, \bibinfo{author}{F.~Sgallari},
    \newblock \bibinfo{title}{An adaptive strategy for the restoration of textured
      images using fractional order regularization},
    \newblock \bibinfo{journal}{Numer. Math. Theory Methods Appl.}
      \bibinfo{volume}{6} (\bibinfo{year}{2013}) \bibinfo{pages}{276–296}.
      \DOIprefix\doi{10.1017/S1004897900001239}.
    \bibitem[{Chen and Tian(2015)}]{CHEN20154424}
    \bibinfo{author}{H.~Chen}, \bibinfo{author}{S.~Tian},
    \newblock \bibinfo{title}{Initial boundary value problem for a class of
      semilinear pseudo-parabolic equations with logarithmic nonlinearity},
    \newblock \bibinfo{journal}{J. Differ. Equations} \bibinfo{volume}{258}
      (\bibinfo{year}{2015}) \bibinfo{pages}{4424--4442}.
      \DOIprefix\doi{10.1016/j.jde.2015.01.038}.
    \bibitem[{Chen et~al.(2019)Chen, Zhang, Lin, and Han}]{CHEN20197611}
    \bibinfo{author}{M.~Chen}, \bibinfo{author}{H.~Zhang},
      \bibinfo{author}{G.~Lin}, \bibinfo{author}{Q.~Han},
    \newblock \bibinfo{title}{A new local and nonlocal total variation
      regularization model for image denoising},
    \newblock \bibinfo{journal}{Clust. Comput.} \bibinfo{volume}{22}
      (\bibinfo{year}{2019}) \bibinfo{pages}{7611--7627}.
      \DOIprefix\doi{10.1007/s10586-018-2338-1}.
    \bibitem[{Chen et~al.(2024)Chen, Li, and Wang}]{CHEN2024395}
    \bibinfo{author}{N.~Chen}, \bibinfo{author}{F.~Li}, \bibinfo{author}{P.~Wang},
    \newblock \bibinfo{title}{{Global existence and blow-up phenomenon for a
      nonlocal semilinear pseudo-parabolic $p$-Laplacian equation}},
    \newblock \bibinfo{journal}{J. Differ. Equations} \bibinfo{volume}{409}
      (\bibinfo{year}{2024}) \bibinfo{pages}{395--440}.
      \DOIprefix\doi{10.1016/j.jde.2024.07.008}.
    \bibitem[{Chen et~al.(2006)Chen, Levine, and Rao}]{CHEN20061383}
    \bibinfo{author}{Y.~Chen}, \bibinfo{author}{S.~Levine},
      \bibinfo{author}{M.~Rao},
    \newblock \bibinfo{title}{Variable exponent, linear growth functionals in image
      restoration},
    \newblock \bibinfo{journal}{SIAM J. Appl. Math.} \bibinfo{volume}{66}
      (\bibinfo{year}{2006}) \bibinfo{pages}{1383--1406}.
      \DOIprefix\doi{10.1137/050624522}.
    \bibitem[{Cirant and Goffi(2019)}]{CIRANT2019913}
    \bibinfo{author}{M.~Cirant}, \bibinfo{author}{A.~Goffi},
    \newblock \bibinfo{title}{On the existence and uniqueness of solutions to
      time-dependent fractional {MFG}},
    \newblock \bibinfo{journal}{SIAM J. Math. Anal.} \bibinfo{volume}{51}
      (\bibinfo{year}{2019}) \bibinfo{pages}{913--954}.
      \DOIprefix\doi{10.1137/18M1216420}.
    \bibitem[{Das~Sarma and Ghaisas(1992)}]{DASSARMA19923762}
    \bibinfo{author}{S.~Das~Sarma}, \bibinfo{author}{S.~V. Ghaisas},
    \newblock \bibinfo{title}{Solid-on-solid rules and models for nonequilibrium
      growth in 2+1 dimensions},
    \newblock \bibinfo{journal}{Phys. Rev. Lett.} \bibinfo{volume}{69}
      (\bibinfo{year}{1992}) \bibinfo{pages}{3762--3765}.
      \DOIprefix\doi{10.1103/PhysRevLett.69.3762}.
    \bibitem[{Delon et~al.(2019)Delon, Desolneux, Sutour, and Viano}]{DELON2019458}
    \bibinfo{author}{J.~Delon}, \bibinfo{author}{A.~Desolneux},
      \bibinfo{author}{C.~Sutour}, \bibinfo{author}{A.~Viano},
    \newblock \bibinfo{title}{{RNLp: Mixing Nonlocal and TV-Lp Methods to Remove
      Impulse Noise from Images}},
    \newblock \bibinfo{journal}{J. Math. Imaging Vision} \bibinfo{volume}{61}
      (\bibinfo{year}{2019}) \bibinfo{pages}{458--481}.
      \DOIprefix\doi{10.1007/s10851-018-0856-3}.
    \bibitem[{Diaz and De~Thelin(1994)}]{DIAZ19941085}
    \bibinfo{author}{J.~I. Diaz}, \bibinfo{author}{F.~De~Thelin},
    \newblock \bibinfo{title}{On a nonlinear parabolic problem arising in some
      models related to turbulent flows},
    \newblock \bibinfo{journal}{SIAM J. Math. Anal.} \bibinfo{volume}{25}
      (\bibinfo{year}{1994}) \bibinfo{pages}{1085--1111}.
      \DOIprefix\doi{10.1137/S0036141091217731}.
    \bibitem[{Diening et~al.(2011)Diening, Harjulehto, H{\"a}st{\"o}, and
      Ruzicka}]{DIENING2011}
    \bibinfo{author}{L.~Diening}, \bibinfo{author}{P.~Harjulehto},
      \bibinfo{author}{P.~H{\"a}st{\"o}}, \bibinfo{author}{M.~Ruzicka},
      \bibinfo{title}{Lebesgue and Sobolev Spaces with Variable Exponents},
      \bibinfo{publisher}{Springer}, \bibinfo{year}{2011}.
      \DOIprefix\doi{10.1007/978-3-642-18363-8}.
    \bibitem[{Ding and Zhou(2020)}]{DING20201046}
    \bibinfo{author}{H.~Ding}, \bibinfo{author}{J.~Zhou},
    \newblock \bibinfo{title}{{Local existence, global existence and blow-up of
      solutions to a nonlocal Kirchhoff diffusion problem}},
    \newblock \bibinfo{journal}{Nonlinearity} \bibinfo{volume}{33}
      (\bibinfo{year}{2020}) \bibinfo{pages}{1046}.
      \DOIprefix\doi{10.1088/1361-6544/ab5920}.
    \bibitem[{Ding and Zhou(2025)}]{DING2025055012}
    \bibinfo{author}{H.~Ding}, \bibinfo{author}{J.~Zhou},
    \newblock \bibinfo{title}{{Blow-up for a wave equation with Hartree-type
      nonlinearity}},
    \newblock \bibinfo{journal}{Nonlinearity} \bibinfo{volume}{38}
      (\bibinfo{year}{2025}) \bibinfo{pages}{055012}.
      \DOIprefix\doi{10.1088/1361-6544/adcb82}.
    \bibitem[{Dong et~al.(2015)Dong, Guo, Zhou, Zhang, and Wo}]{DONG20156060}
    \bibinfo{author}{G.~Dong}, \bibinfo{author}{Z.~Guo}, \bibinfo{author}{Z.~Zhou},
      \bibinfo{author}{D.~Zhang}, \bibinfo{author}{B.~Wo},
    \newblock \bibinfo{title}{Coherence-enhancing diffusion with the source term},
    \newblock \bibinfo{journal}{Appl. Math. Model.} \bibinfo{volume}{39}
      (\bibinfo{year}{2015}) \bibinfo{pages}{6060--6072}.
      \DOIprefix\doi{10.1016/j.apm.2015.01.041}.
    \bibitem[{Dong and Zhou(2017)}]{DONG201789}
    \bibinfo{author}{Z.~Dong}, \bibinfo{author}{J.~Zhou},
    \newblock \bibinfo{title}{Global existence and finite time blow-up for a class
      of thin-film equation},
    \newblock \bibinfo{journal}{Z. Angew. Math. Phys.} \bibinfo{volume}{68}
      (\bibinfo{year}{2017}) \bibinfo{pages}{89}.
      \DOIprefix\doi{10.1007/s00033-017-0835-3}.
    \bibitem[{Escudero et~al.(2015)Escudero, Gazzola, and Peral}]{ESCUDERO2015924}
    \bibinfo{author}{C.~Escudero}, \bibinfo{author}{F.~Gazzola},
      \bibinfo{author}{I.~Peral},
    \newblock \bibinfo{title}{{Global existence versus blow-up results for a fourth
      order parabolic PDE involving the Hessian}},
    \newblock \bibinfo{journal}{J. Math. Pures Appl.} \bibinfo{volume}{103}
      (\bibinfo{year}{2015}) \bibinfo{pages}{924--957}.
      \DOIprefix\doi{10.1016/j.matpur.2014.09.007}.
    \bibitem[{Feng et~al.(2022)Feng, Hu, and Xu}]{FENG2022561}
    \bibinfo{author}{Y.~Feng}, \bibinfo{author}{B.~Hu}, \bibinfo{author}{X.~Xu},
    \newblock \bibinfo{title}{Suppression of epitaxial thin film growth by mixing},
    \newblock \bibinfo{journal}{J. Differ. Equations} \bibinfo{volume}{317}
      (\bibinfo{year}{2022}) \bibinfo{pages}{561--602}.
      \DOIprefix\doi{10.1016/j.jde.2022.02.011}.
    \bibitem[{Gazzola and Squassina(2006)}]{GAZZOLA2006185}
    \bibinfo{author}{F.~Gazzola}, \bibinfo{author}{M.~Squassina},
    \newblock \bibinfo{title}{Global solutions and finite time blow up for damped
      semilinear wave equations},
    \newblock \bibinfo{journal}{Ann. Inst. Henri Poincar\'e, Anal. Non Lin\'eaire}
      \bibinfo{volume}{23} (\bibinfo{year}{2006}) \bibinfo{pages}{185--207}.
      \DOIprefix\doi{10.1016/j.anihpc.2005.02.007}.
    \bibitem[{Gazzola and Weth(2005)}]{GAZZOLA2005961}
    \bibinfo{author}{F.~Gazzola}, \bibinfo{author}{T.~Weth},
    \newblock \bibinfo{title}{Finite time blow-up and global solutions for
      semilinear parabolic equations with initial data at high energy level},
    \newblock \bibinfo{journal}{Differ. Integral Equ.} \bibinfo{volume}{18}
      (\bibinfo{year}{2005}) \bibinfo{pages}{961--990}.
      \DOIprefix\doi{10.57262/die/1356060117}.
    \bibitem[{Gilboa and Osher(2009)}]{GILBOA20091005}
    \bibinfo{author}{G.~Gilboa}, \bibinfo{author}{S.~Osher},
    \newblock \bibinfo{title}{Nonlocal operators with applications to image
      processing},
    \newblock \bibinfo{journal}{Multiscale Model. Simul.} \bibinfo{volume}{7}
      (\bibinfo{year}{2009}) \bibinfo{pages}{1005--1028}.
      \DOIprefix\doi{10.1137/070698592}.
    \bibitem[{Gilboa et~al.(2002)Gilboa, Sochen, and Zeevi}]{GILBOA2002689}
    \bibinfo{author}{G.~Gilboa}, \bibinfo{author}{N.~Sochen},
      \bibinfo{author}{Y.~Y. Zeevi},
    \newblock \bibinfo{title}{Forward-and-backward diffusion processes for adaptive
      image enhancement and denoising},
    \newblock \bibinfo{journal}{IEEE Trans. Image Process.} \bibinfo{volume}{11}
      (\bibinfo{year}{2002}) \bibinfo{pages}{689--703}.
      \DOIprefix\doi{10.1109/TIP.2002.800883}.
    \bibitem[{Gilboa et~al.(2004)Gilboa, Sochen, and Zeevi}]{GILBOA2004121}
    \bibinfo{author}{G.~Gilboa}, \bibinfo{author}{N.~Sochen},
      \bibinfo{author}{Y.~Y. Zeevi},
    \newblock \bibinfo{title}{Image sharpening by flows based on triple well
      potentials},
    \newblock \bibinfo{journal}{J. Math. Imaging Vision} \bibinfo{volume}{20}
      (\bibinfo{year}{2004}) \bibinfo{pages}{121--131}.
      \DOIprefix\doi{10.1023/B:JMIV.0000011320.81911.38}.
    \bibitem[{Gonzalez and Woods(2008)}]{GONZALEZ2008}
    \bibinfo{author}{R.~C. Gonzalez}, \bibinfo{author}{R.~E. Woods},
      \bibinfo{title}{Digital Image Processing}, \bibinfo{publisher}{Prentice
      Hall}, \bibinfo{address}{Upper Saddle River, N.J.}, \bibinfo{year}{2008}.
    \bibitem[{Guo et~al.(2022)Guo, Zhang, Gao, and Liao}]{GUO202245}
    \bibinfo{author}{B.~Guo}, \bibinfo{author}{J.~Zhang}, \bibinfo{author}{W.~Gao},
      \bibinfo{author}{M.~Liao},
    \newblock \bibinfo{title}{{Classification of blow-up and global existence of
      solutions to an initial Neumann problem}},
    \newblock \bibinfo{journal}{J. Differ. Equations} \bibinfo{volume}{340}
      (\bibinfo{year}{2022}) \bibinfo{pages}{45--82}.
      \DOIprefix\doi{10.1016/j.jde.2022.08.036}.
    \bibitem[{Guo et~al.(2012)Guo, Sun, Zhang, and Wu}]{GUO2012958}
    \bibinfo{author}{Z.~Guo}, \bibinfo{author}{J.~Sun}, \bibinfo{author}{D.~Zhang},
      \bibinfo{author}{B.~Wu},
    \newblock \bibinfo{title}{{Adaptive Perona-Malik model based on the variable
      exponent for image denoising}},
    \newblock \bibinfo{journal}{IEEE Trans. Image Process.} \bibinfo{volume}{21}
      (\bibinfo{year}{2012}) \bibinfo{pages}{958--967}.
      \DOIprefix\doi{10.1109/TIP.2011.2169272}.
    \bibitem[{Hajiaboli(2011)}]{HAJIABOLI2011177}
    \bibinfo{author}{M.~R. Hajiaboli},
    \newblock \bibinfo{title}{An anisotropic fourth-order diffusion filter for
      image noise removal},
    \newblock \bibinfo{journal}{Int. J. Comput. Vis.} \bibinfo{volume}{92}
      (\bibinfo{year}{2011}) \bibinfo{pages}{177--191}.
      \DOIprefix\doi{10.1007/s11263-010-0330-1}.
    \bibitem[{Han(2018)}]{HAN2018451}
    \bibinfo{author}{Y.~Han},
    \newblock \bibinfo{title}{A class of fourth-order parabolic equation with
      arbitrary initial energy},
    \newblock \bibinfo{journal}{Nonlinear Anal. Real World Appl.}
      \bibinfo{volume}{43} (\bibinfo{year}{2018}) \bibinfo{pages}{451--466}.
      \DOIprefix\doi{10.1016/j.nonrwa.2018.03.009}.
    \bibitem[{Ishige et~al.(2020)Ishige, Miyake, and Okabe}]{ISHIGE2020927}
    \bibinfo{author}{K.~Ishige}, \bibinfo{author}{N.~Miyake},
      \bibinfo{author}{S.~Okabe},
    \newblock \bibinfo{title}{Blowup for a fourth-order parabolic equation with
      gradient nonlinearity},
    \newblock \bibinfo{journal}{SIAM J. Math. Anal.} \bibinfo{volume}{52}
      (\bibinfo{year}{2020}) \bibinfo{pages}{927--953}.
      \DOIprefix\doi{10.1137/19M1253654}.
    \bibitem[{{Kbiri Alaoui} et~al.(2014){Kbiri Alaoui}, Messaoudi, and
      Khenous}]{KBIRIALAOUI20141723}
    \bibinfo{author}{M.~{Kbiri Alaoui}}, \bibinfo{author}{S.~Messaoudi},
      \bibinfo{author}{H.~Khenous},
    \newblock \bibinfo{title}{A blow-up result for nonlinear generalized heat
      equation},
    \newblock \bibinfo{journal}{Comput. Math. Appl.} \bibinfo{volume}{68}
      (\bibinfo{year}{2014}) \bibinfo{pages}{1723--1732}.
      \DOIprefix\doi{10.1016/j.camwa.2014.10.018}.
    \bibitem[{King et~al.(2003)King, Stein, and Winkler}]{KING2003459}
    \bibinfo{author}{B.~B. King}, \bibinfo{author}{O.~Stein},
      \bibinfo{author}{M.~Winkler},
    \newblock \bibinfo{title}{A fourth-order parabolic equation modeling epitaxial
      thin film growth},
    \newblock \bibinfo{journal}{J. Math. Anal. Appl.} \bibinfo{volume}{286}
      (\bibinfo{year}{2003}) \bibinfo{pages}{459--490}.
      \DOIprefix\doi{10.1016/S0022-247X(03)00474-8}.
    \bibitem[{Kohn and Yan(2003)}]{KOHN20031549}
    \bibinfo{author}{R.~V. Kohn}, \bibinfo{author}{X.~Yan},
    \newblock \bibinfo{title}{Upper bound on the coarsening rate for an epitaxial
      growth model},
    \newblock \bibinfo{journal}{Comm. Pure Appl. Math.} \bibinfo{volume}{56}
      (\bibinfo{year}{2003}) \bibinfo{pages}{1549--1564}.
      \DOIprefix\doi{10.1002/cpa.10103}.
    \bibitem[{Lady\v{z}enskaja et~al.(1968)Lady\v{z}enskaja, Solonnikov, and
      Ural'ceva}]{LADYENSKAJA1968}
    \bibinfo{author}{O.~A. Lady\v{z}enskaja}, \bibinfo{author}{V.~A. Solonnikov},
      \bibinfo{author}{N.~N. Ural'ceva}, \bibinfo{title}{Linear and Quasi-linear
      Equations of Parabolic Type}, volume~\bibinfo{volume}{23} of
      \textit{\bibinfo{series}{Translations of Mathematical Monographs}},
      \bibinfo{publisher}{Amer. Math. Soc}, \bibinfo{address}{Providence},
      \bibinfo{year}{1968}.
    \bibitem[{Levine(1973)}]{LEVINE1973371}
    \bibinfo{author}{H.~A. Levine},
    \newblock \bibinfo{title}{Some nonexistence and instability theorems for
      solutions of formally parabolic equations of the form
      ${P}u_t=-{A}u+\mathcal{F}(u)$},
    \newblock \bibinfo{journal}{Arch. Ration. Mech. Anal.} \bibinfo{volume}{51}
      (\bibinfo{year}{1973}) \bibinfo{pages}{371--386}.
      \DOIprefix\doi{10.1007/BF00263041}.
    \bibitem[{Li et~al.(2023)Li, Sun, and Liu}]{LI20239147}
    \bibinfo{author}{F.~Li}, \bibinfo{author}{A.~Sun}, \bibinfo{author}{B.~Liu},
    \newblock \bibinfo{title}{Singular solutions for the fourth-order parabolic
      equation with nonstandard growth conditions and absorption},
    \newblock \bibinfo{journal}{Math. Methods Appl. Sci.} \bibinfo{volume}{46}
      (\bibinfo{year}{2023}) \bibinfo{pages}{9147--9166}.
      \DOIprefix\doi{10.1002/mma.9044}.
    \bibitem[{Li et~al.(2013)Li, Yin, Sun, and Guo}]{LI20132117}
    \bibinfo{author}{J.~Li}, \bibinfo{author}{J.~Yin}, \bibinfo{author}{J.~Sun},
      \bibinfo{author}{Z.~Guo},
    \newblock \bibinfo{title}{A weighted dual porous medium equation applied to
      image restoration},
    \newblock \bibinfo{journal}{Math. Methods Appl. Sci.} \bibinfo{volume}{36}
      (\bibinfo{year}{2013}) \bibinfo{pages}{2117--2127}.
      \DOIprefix\doi{10.1002/mma.2739}.
    \bibitem[{Li et~al.(2016)Li, Gao, and Han}]{LI201696}
    \bibinfo{author}{Q.~Li}, \bibinfo{author}{W.~Gao}, \bibinfo{author}{Y.~Han},
    \newblock \bibinfo{title}{Global existence blow up and extinction for a class
      of thin-film equation},
    \newblock \bibinfo{journal}{Nonlinear Anal. Theory Methods Appl.}
      \bibinfo{volume}{147} (\bibinfo{year}{2016}) \bibinfo{pages}{96--109}.
      \DOIprefix\doi{10.1016/j.na.2016.08.021}.
    \bibitem[{Li and Melcher(2024)}]{LI202425}
    \bibinfo{author}{X.~Li}, \bibinfo{author}{C.~Melcher},
    \newblock \bibinfo{title}{Well-posedness and stability for a class of
      fourth-order nonlinear parabolic equations},
    \newblock \bibinfo{journal}{J. Differ. Equations} \bibinfo{volume}{391}
      (\bibinfo{year}{2024}) \bibinfo{pages}{25--56}.
      \DOIprefix\doi{10.1016/j.jde.2024.01.038}.
    \bibitem[{Liu and Zhao(2006)}]{LIU20062665}
    \bibinfo{author}{Y.~Liu}, \bibinfo{author}{J.~Zhao},
    \newblock \bibinfo{title}{On potential wells and applications to semilinear
      hyperbolic equations and parabolic equations},
    \newblock \bibinfo{journal}{Nonlinear Anal. Theory Methods Appl.}
      \bibinfo{volume}{64} (\bibinfo{year}{2006}) \bibinfo{pages}{2665--2687}.
      \DOIprefix\doi{10.1016/j.na.2005.09.011}.
    \bibitem[{Lysaker et~al.(2003)Lysaker, Lundervold, and Tai}]{LYSAKER20031579}
    \bibinfo{author}{M.~Lysaker}, \bibinfo{author}{A.~Lundervold},
      \bibinfo{author}{X.~C. Tai},
    \newblock \bibinfo{title}{Noise removal using fourth-order partial differential
      equation with applications to medical magnetic resonance images in space and
      time},
    \newblock \bibinfo{journal}{IEEE Trans. Image Process.} \bibinfo{volume}{12}
      (\bibinfo{year}{2003}) \bibinfo{pages}{1579--1590}.
      \DOIprefix\doi{10.1109/TIP.2003.819229}.
    \bibitem[{Marras(2024)}]{MARRAS2024109253}
    \bibinfo{author}{M.~Marras},
    \newblock \bibinfo{title}{{Lifespan of solutions for a class of fourth order
      parabolic equations involving the Hessian}},
    \newblock \bibinfo{journal}{Appl. Math. Lett.} \bibinfo{volume}{158}
      (\bibinfo{year}{2024}) \bibinfo{pages}{109253}.
      \DOIprefix\doi{10.1016/j.aml.2024.109253}.
    \bibitem[{Martinez(1999)}]{MARTINEZ1999251}
    \bibinfo{author}{P.~Martinez},
    \newblock \bibinfo{title}{A new method to obtain decay rate estimates for
      dissipative systems with localized damping},
    \newblock \bibinfo{journal}{Rev. mat. complut.} \bibinfo{volume}{12}
      (\bibinfo{year}{1999}) \bibinfo{pages}{251--283}.
      \DOIprefix\doi{10.5209/rev_REMA.1999.v12.n1.17227}.
    \bibitem[{Matter(1982)}]{MATTER1982}
    \bibinfo{author}{F.~Matter}, \bibinfo{title}{Ordinary Differential Equations},
      \bibinfo{publisher}{Birkh{\"a}user}, \bibinfo{address}{Boston, MA},
      \bibinfo{year}{1982}. \DOIprefix\doi{10.1137/1.9780898719222.fm}.
    \bibitem[{Miyake and Okabe(2021)}]{MIYAKE2021247}
    \bibinfo{author}{N.~Miyake}, \bibinfo{author}{S.~Okabe},
    \newblock \bibinfo{title}{Asymptotic behavior of solutions for a fourth order
      parabolic equation with gradient nonlinearity via the {Galerkin} method},
    \newblock in: \bibinfo{booktitle}{Geometric Properties for Parabolic and
      Elliptic PDE's}, \bibinfo{year}{2021}, pp. \bibinfo{pages}{247--271}.
      \DOIprefix\doi{10.1007/978-3-030-73363-6_12}.
    \bibitem[{Mullins(1957)}]{MULLINS1957333}
    \bibinfo{author}{W.~W. Mullins},
    \newblock \bibinfo{title}{Theory of thermal grooving},
    \newblock \bibinfo{journal}{J. Appl. Phys.} \bibinfo{volume}{28}
      (\bibinfo{year}{1957}) \bibinfo{pages}{333--339}.
      \DOIprefix\doi{10.1063/1.1722742}.
    \bibitem[{Ortiz et~al.(1999)Ortiz, Repetto, and Si}]{ORTIZ1999697}
    \bibinfo{author}{M.~Ortiz}, \bibinfo{author}{E.~Repetto},
      \bibinfo{author}{H.~Si},
    \newblock \bibinfo{title}{A continuum model of kinetic roughening and
      coarsening in thin films},
    \newblock \bibinfo{journal}{J. Mech. Phys. Solids} \bibinfo{volume}{47}
      (\bibinfo{year}{1999}) \bibinfo{pages}{697--730}.
      \DOIprefix\doi{10.1016/S0022-5096(98)00102-1}.
    \bibitem[{Osher and Rudin(1990)}]{OSHER1990919}
    \bibinfo{author}{S.~Osher}, \bibinfo{author}{L.~I. Rudin},
    \newblock \bibinfo{title}{Feature-oriented image enhancement using shock
      filters},
    \newblock \bibinfo{journal}{SIAM Journal on Numerical Analysis}
      \bibinfo{volume}{27} (\bibinfo{year}{1990}) \bibinfo{pages}{919--940}.
      \DOIprefix\doi{10.1137/0727053}.
    \bibitem[{Osher and Rudin(1991)}]{OSHER1991414}
    \bibinfo{author}{S.~Osher}, \bibinfo{author}{L.~I. Rudin},
    \newblock \bibinfo{title}{Shocks and other nonlinear filtering applied to image
      processing},
    \newblock in: \bibinfo{editor}{A.~G. Tescher} (Ed.),
      \bibinfo{booktitle}{Applications of Digital Image Processing XIV}, volume
      \bibinfo{volume}{1567}, \bibinfo{organization}{International Society for
      Optics and Photonics}, \bibinfo{publisher}{SPIE}, \bibinfo{year}{1991}, pp.
      \bibinfo{pages}{414--431}. \DOIprefix\doi{10.1117/12.50835}.
    \bibitem[{Payne and Sattinger(1975)}]{PAYNE1975273}
    \bibinfo{author}{L.~E. Payne}, \bibinfo{author}{D.~H. Sattinger},
    \newblock \bibinfo{title}{Saddle points and instability of nonlinear hyperbolic
      equations},
    \newblock \bibinfo{journal}{Israel J. Math.} \bibinfo{volume}{22}
      (\bibinfo{year}{1975}) \bibinfo{pages}{273--303}. \DOIprefix\doi{10.1007/BF02761595}.
    \bibitem[{Philippin(2015)}]{PHILIPPIN20152507}
    \bibinfo{author}{G.~Philippin},
    \newblock \bibinfo{title}{Blow-up phenomena for a class of fourth-order
      parabolic problems},
    \newblock \bibinfo{journal}{Proceedings of the American Mathematical Society}
      \bibinfo{volume}{143} (\bibinfo{year}{2015}) \bibinfo{pages}{2507--2513}.
    \bibitem[{Philippin and {Vernier Piro}(2016)}]{PHILIPPIN2016718}
    \bibinfo{author}{G.~Philippin}, \bibinfo{author}{S.~{Vernier Piro}},
    \newblock \bibinfo{title}{Behaviour in time of solutions to a class of fourth
      order evolution equations},
    \newblock \bibinfo{journal}{J. Math. Anal. Appl.} \bibinfo{volume}{436}
      (\bibinfo{year}{2016}) \bibinfo{pages}{718--728}.
      \DOIprefix\doi{10.1016/j.jmaa.2015.11.066}.
    \bibitem[{Pollak et~al.(2000)Pollak, Willsky, and Krim}]{POLLAK2000256}
    \bibinfo{author}{I.~Pollak}, \bibinfo{author}{A.~Willsky},
      \bibinfo{author}{H.~Krim},
    \newblock \bibinfo{title}{Image segmentation and edge enhancement with
      stabilized inverse diffusion equations},
    \newblock \bibinfo{journal}{IEEE Trans. Image Process.} \bibinfo{volume}{9}
      (\bibinfo{year}{2000}) \bibinfo{pages}{256--266}.
      \DOIprefix\doi{10.1109/83.821738}.
    \bibitem[{R\v{a}dulescu and Repov\v{s}(2015)}]{RADULESCU2015}
    \bibinfo{author}{V.~D. R\v{a}dulescu}, \bibinfo{author}{D.~D. Repov\v{s}},
      \bibinfo{title}{Partial Differential Equations with Variable Exponents:
      Variational Methods and Qualitative Analysis}, \bibinfo{publisher}{CRC
      press}, \bibinfo{year}{2015}. \DOIprefix\doi{10.1201/b18601}.
    \bibitem[{Sandjo et~al.(2015)Sandjo, Moutari, and Gningue}]{SANDJO20157260}
    \bibinfo{author}{A.~Sandjo}, \bibinfo{author}{S.~Moutari},
      \bibinfo{author}{Y.~Gningue},
    \newblock \bibinfo{title}{Solutions of fourth-order parabolic equation modeling
      thin film growth},
    \newblock \bibinfo{journal}{J. Differ. Equations} \bibinfo{volume}{259}
      (\bibinfo{year}{2015}) \bibinfo{pages}{7260--7283}.
      \DOIprefix\doi{10.1016/j.jde.2015.08.022}.
    \bibitem[{Sattinger(1968)}]{SATTINGER1968148}
    \bibinfo{author}{D.~H. Sattinger},
    \newblock \bibinfo{title}{On global solution of nonlinear hyperbolic
      equations},
    \newblock \bibinfo{journal}{Arch. Ration. Mech. Anal.} \bibinfo{volume}{30}
      (\bibinfo{year}{1968}) \bibinfo{pages}{148--172}.
      \DOIprefix\doi{10.1007/BF00250942}.
    \bibitem[{Schaefer and Weickert(2022)}]{SCHAEFER20221}
    \bibinfo{author}{K.~Schaefer}, \bibinfo{author}{J.~Weickert},
    \newblock \bibinfo{title}{Stabilised inverse flowline evolution for anisotropic
      image sharpening},
    \newblock in: \bibinfo{booktitle}{2022 10th European Workshop on Visual
      Information Processing (EUVIP)}, \bibinfo{year}{2022}, pp.
      \bibinfo{pages}{1--6}. \DOIprefix\doi{10.1109/EUVIP53989.2022.9922728}.
    \bibitem[{Shao et~al.(2020)Shao, Guo, Shan, Zhang, and Wu}]{SHAO2020103166}
    \bibinfo{author}{J.~Shao}, \bibinfo{author}{Z.~Guo}, \bibinfo{author}{X.~Shan},
      \bibinfo{author}{C.~Zhang}, \bibinfo{author}{B.~Wu},
    \newblock \bibinfo{title}{A new non-divergence diffusion equation with variable
      exponent for multiplicative noise removal},
    \newblock \bibinfo{journal}{Nonlinear Anal. Real World Appl.}
      \bibinfo{volume}{56} (\bibinfo{year}{2020}) \bibinfo{pages}{103166}.
      \DOIprefix\doi{10.1016/j.nonrwa.2020.103166}.
    \bibitem[{Shi(2021)}]{SHI2021103362}
    \bibinfo{author}{K.~Shi},
    \newblock \bibinfo{title}{Coupling local and nonlocal diffusion equations for
      image denoising},
    \newblock \bibinfo{journal}{Nonlinear Anal. Real World Appl.}
      \bibinfo{volume}{62} (\bibinfo{year}{2021}) \bibinfo{pages}{103362}.
      \DOIprefix\doi{10.1016/j.nonrwa.2021.103362}.
    \bibitem[{Shi(2023)}]{SHI2023686}
    \bibinfo{author}{K.~Shi},
    \newblock \bibinfo{title}{Coupling local and nonlocal fourth-order evolution
      equations for image denoising},
    \newblock \bibinfo{journal}{Inverse Probl. Imaging} \bibinfo{volume}{17}
      (\bibinfo{year}{2023}) \bibinfo{pages}{686--707}.
      \DOIprefix\doi{10.3934/ipi.2022072}.
    \bibitem[{Song et~al.(2024)Song, Sun, Shi, Guo, and Zhang}]{SONG2024926}
    \bibinfo{author}{F.~Song}, \bibinfo{author}{J.~Sun}, \bibinfo{author}{S.~Shi},
      \bibinfo{author}{Z.~Guo}, \bibinfo{author}{D.~Zhang},
    \newblock \bibinfo{title}{Re-initialization-free level set method via molecular
      beam epitaxy equation regularization for image segmentation},
    \newblock \bibinfo{journal}{J. Math. Imaging Vision} \bibinfo{volume}{66}
      (\bibinfo{year}{2024}) \bibinfo{pages}{926--950}.
      \DOIprefix\doi{10.1007/s10851-024-01205-x}.
    \bibitem[{Sun et~al.(2018)Sun, Liu, and Wu}]{SUN20183685}
    \bibinfo{author}{F.~Sun}, \bibinfo{author}{L.~Liu}, \bibinfo{author}{Y.~Wu},
    \newblock \bibinfo{title}{Finite time blow-up for a class of parabolic or
      pseudo-parabolic equations},
    \newblock \bibinfo{journal}{Comput. Math. Appl.} \bibinfo{volume}{75}
      (\bibinfo{year}{2018}) \bibinfo{pages}{3685--3701}.
      \DOIprefix\doi{10.1016/j.camwa.2018.02.025}.
    \bibitem[{Sun et~al.(2014)Sun, Li, and Liu}]{SUN2014691}
    \bibinfo{author}{J.~Sun}, \bibinfo{author}{J.~Li}, \bibinfo{author}{Q.~Liu},
    \newblock \bibinfo{title}{{Cauchy problem of a nonlocal $p$-Laplacian evolution
      equation with nonlocal convection}},
    \newblock \bibinfo{journal}{Nonlinear Anal. Theory Methods Appl.}
      \bibinfo{volume}{95} (\bibinfo{year}{2014}) \bibinfo{pages}{691--702}.
      \DOIprefix\doi{10.1016/j.na.2013.09.023}.
    \bibitem[{Ternat et~al.(2011)Ternat, Orellana, and Daripa}]{TERNAT2011266}
    \bibinfo{author}{F.~Ternat}, \bibinfo{author}{O.~Orellana},
      \bibinfo{author}{P.~Daripa},
    \newblock \bibinfo{title}{Two stable methods with numerical experiments for
      solving the backward heat equation},
    \newblock \bibinfo{journal}{Applied Numerical Mathematics} \bibinfo{volume}{61}
      (\bibinfo{year}{2011}) \bibinfo{pages}{266--284}.
      \DOIprefix\doi{10.1016/j.apnum.2010.09.006}.
    \bibitem[{Tiirola(2017)}]{TIIROLA201756}
    \bibinfo{author}{J.~Tiirola},
    \newblock \bibinfo{title}{Image denoising using directional adaptive variable
      exponents model},
    \newblock \bibinfo{journal}{J. Math. Imaging Vision} \bibinfo{volume}{57}
      (\bibinfo{year}{2017}) \bibinfo{pages}{56--74}.
      \DOIprefix\doi{10.1007/s10851-016-0666-4}.
    \bibitem[{Wen et~al.(2023)Wen, Vese, Shi, Guo, and Sun}]{WEN2023453}
    \bibinfo{author}{Y.~Wen}, \bibinfo{author}{L.~A. Vese},
      \bibinfo{author}{K.~Shi}, \bibinfo{author}{Z.~Guo}, \bibinfo{author}{J.~Sun},
    \newblock \bibinfo{title}{Nonlocal adaptive biharmonic regularizer for image
      restoration},
    \newblock \bibinfo{journal}{J. Math. Imaging Vision} \bibinfo{volume}{65}
      (\bibinfo{year}{2023}) \bibinfo{pages}{453--471}.
      \DOIprefix\doi{10.1007/s10851-022-01129-4}.
    \bibitem[{Xiang et~al.(2018)Xiang, R\v{a}dulescu, and Zhang}]{XIANG20183328}
    \bibinfo{author}{M.~Xiang}, \bibinfo{author}{V.~D. R\v{a}dulescu},
      \bibinfo{author}{B.~Zhang},
    \newblock \bibinfo{title}{{Nonlocal Kirchhoff diffusion problems: local
      existence and blow-up of solutions}},
    \newblock \bibinfo{journal}{Nonlinearity} \bibinfo{volume}{31}
      (\bibinfo{year}{2018}) \bibinfo{pages}{3228}.
      \DOIprefix\doi{10.1088/1361-6544/aaba35}.
    \bibitem[{Xiong et~al.(2014)Xiong, Li, and Xu}]{XIONG201481}
    \bibinfo{author}{X.~Xiong}, \bibinfo{author}{X.~Li}, \bibinfo{author}{G.~Xu},
    \newblock \bibinfo{title}{Well-posed {Gaussian}-like models for image denoising
      and sharpening},
    \newblock in: \bibinfo{editor}{Y.~J. Zhang}, \bibinfo{editor}{J.~M. R.~S.
      Tavares} (Eds.), \bibinfo{booktitle}{Computational Modeling of Objects
      Presented in Images. Fundamentals, Methods, and Applications},
      \bibinfo{publisher}{Springer International Publishing},
      \bibinfo{address}{Cham}, \bibinfo{year}{2014}, pp. \bibinfo{pages}{81--94}.
      \DOIprefix\doi{10.1007/978-3-319-09994-1_8}.
    \bibitem[{Xu and Su(2013)}]{XU20132732}
    \bibinfo{author}{R.~Xu}, \bibinfo{author}{J.~Su},
    \newblock \bibinfo{title}{Global existence and finite time blow-up for a class
      of semilinear pseudo-parabolic equations},
    \newblock \bibinfo{journal}{J. Funct. Anal.} \bibinfo{volume}{264}
      (\bibinfo{year}{2013}) \bibinfo{pages}{2732--2763}.
      \DOIprefix\doi{10.1016/j.jfa.2013.03.010}.
    \bibitem[{Yang and Han(2022)}]{YANG2022569}
    \bibinfo{author}{H.~Yang}, \bibinfo{author}{Y.~Han},
    \newblock \bibinfo{title}{{Blow-up for a damped p-Laplacian type wave equation
      with logarithmic nonlinearity}},
    \newblock \bibinfo{journal}{J. Differ. Equations} \bibinfo{volume}{306}
      (\bibinfo{year}{2022}) \bibinfo{pages}{569--589}.
      \DOIprefix\doi{10.1016/j.jde.2021.10.036}.
    \bibitem[{Yang et~al.(2023)Yang, Guo, Wu, and Du}]{YANG2023108627}
    \bibinfo{author}{J.~Yang}, \bibinfo{author}{Z.~Guo}, \bibinfo{author}{B.~Wu},
      \bibinfo{author}{S.~Du},
    \newblock \bibinfo{title}{A nonlinear anisotropic diffusion model with
      non-standard growth for image segmentation},
    \newblock \bibinfo{journal}{Appl. Math. Lett.} \bibinfo{volume}{141}
      (\bibinfo{year}{2023}) \bibinfo{pages}{108627}.
      \DOIprefix\doi{10.1016/j.aml.2023.108627}.
    \bibitem[{Yang et~al.(2022)Yang, Guo, Zhang, Wu, and Du}]{YANG202229}
    \bibinfo{author}{J.~Yang}, \bibinfo{author}{Z.~Guo},
      \bibinfo{author}{D.~Zhang}, \bibinfo{author}{B.~Wu}, \bibinfo{author}{S.~Du},
    \newblock \bibinfo{title}{An anisotropic diffusion system with nonlinear
      time-delay structure tensor for image enhancement and segmentation},
    \newblock \bibinfo{journal}{Comput. Math. Appl.} \bibinfo{volume}{107}
      (\bibinfo{year}{2022}) \bibinfo{pages}{29--44}.
      \DOIprefix\doi{10.1016/j.camwa.2021.12.005}.
    \bibitem[{Yao et~al.(2019)Yao, Guo, Sun, Wu, and Gao}]{YAO2019839}
    \bibinfo{author}{W.~Yao}, \bibinfo{author}{Z.~Guo}, \bibinfo{author}{J.~Sun},
      \bibinfo{author}{B.~Wu}, \bibinfo{author}{H.~Gao},
    \newblock \bibinfo{title}{Multiplicative noise removal for texture images based
      on adaptive anisotropic fractional diffusion equations},
    \newblock \bibinfo{journal}{SIAM J. Imaging Sci.} \bibinfo{volume}{12}
      (\bibinfo{year}{2019}) \bibinfo{pages}{839--873}.
      \DOIprefix\doi{10.1137/18M1187192}.
    \bibitem[{You and Kaveh(2000)}]{YOU20001723}
    \bibinfo{author}{Y.~L. You}, \bibinfo{author}{M.~Kaveh},
    \newblock \bibinfo{title}{Fourth-order partial differential equations for noise
      removal},
    \newblock \bibinfo{journal}{IEEE Trans. Image Process.} \bibinfo{volume}{9}
      (\bibinfo{year}{2000}) \bibinfo{pages}{1723--1730}.
      \DOIprefix\doi{10.1109/83.869184}.
    \bibitem[{Yuan et~al.(2025)Yuan, Ge, Han, and Cao}]{YUAN2025346}
    \bibinfo{author}{W.~Yuan}, \bibinfo{author}{B.~Ge}, \bibinfo{author}{Y.~Han},
      \bibinfo{author}{Q.~Cao},
    \newblock \bibinfo{title}{{Study on the diffusion fractional $m$-Laplacian with
      singular potential term}},
    \newblock \bibinfo{journal}{Fract. Calc. Appl. Anal.} \bibinfo{volume}{28}
      (\bibinfo{year}{2025}) \bibinfo{pages}{346--363}.
      \DOIprefix\doi{10.1007/s13540-024-00360-7}.
    \bibitem[{Zangwill(1996)}]{ZANGWILL19968}
    \bibinfo{author}{A.~Zangwill},
    \newblock \bibinfo{title}{Some causes and a consequence of epitaxial
      roughening},
    \newblock \bibinfo{journal}{J. Cryst. Growth} \bibinfo{volume}{163}
      (\bibinfo{year}{1996}) \bibinfo{pages}{8--21}.
      \DOIprefix\doi{10.1016/0022-0248(95)01048-3}.
    \bibitem[{Zhang et~al.(2024)Zhang, Su, and Liu}]{ZHANG2024065011}
    \bibinfo{author}{H.~Zhang}, \bibinfo{author}{X.~Su}, \bibinfo{author}{S.~Liu},
    \newblock \bibinfo{title}{{Global existence and blowup of solutions to a class
      of wave equations with Hartree type nonlinearity}},
    \newblock \bibinfo{journal}{Nonlinearity} \bibinfo{volume}{37}
      (\bibinfo{year}{2024}) \bibinfo{pages}{065011}.
      \DOIprefix\doi{10.1088/1361-6544/ad3f67}.
    \bibitem[{Zhao et~al.(2024)Zhao, Guo, and Wang}]{ZHAO202412}
    \bibinfo{author}{J.~Zhao}, \bibinfo{author}{B.~Guo}, \bibinfo{author}{J.~Wang},
    \newblock \bibinfo{title}{Global existence and blow-up of weak solutions for a
      fourth-order parabolic equation with gradient nonlinearity},
    \newblock \bibinfo{journal}{Z. Angew. Math. Phys.} \bibinfo{volume}{75}
      (\bibinfo{year}{2024}) \bibinfo{pages}{12}.
      \DOIprefix\doi{10.1007/s00033-023-02148-w}.
    \bibitem[{Zhou(2021)}]{ZHOU2021128}
    \bibinfo{author}{J.~Zhou},
    \newblock \bibinfo{title}{{Blow-up of solutions to a fourth-order parabolic
      equation with/without $p$-Laplacian and general nonlinearity modeling
      epitaxial growth}},
    \newblock \bibinfo{journal}{Anal. Math. Phys.} \bibinfo{volume}{11}
      (\bibinfo{year}{2021}) \bibinfo{pages}{128}.
      \DOIprefix\doi{10.1007/s13324-021-00566-5}.
    
\end{thebibliography}
\end{document}